\documentclass[12pt,letterpaper]{amsart}
\usepackage{palatino, euler, epic, eepic, amssymb, xypic, floatflt, microtype, enumerate, stmaryrd}
\input xy
\xyoption{all}

 
 \newlength{\baseunit}               
 \newcount{\numlines}                
 \setlength{\baseunit}{0.05ex}
  


\setlength{\oddsidemargin}{0cm} \setlength{\evensidemargin}{0cm}
\setlength{\marginparwidth}{0in}
\setlength{\marginparsep}{0in}
\setlength{\marginparpush}{0in}
\setlength{\topmargin}{0in}
\setlength{\headheight}{0pt}
\setlength{\headsep}{20pt}
\setlength{\footskip}{.3in}
\setlength{\textheight}{8.8in}
\setlength{\textwidth}{6.5in}
\setlength{\parskip}{4pt}

\linespread{1.15} 

\usepackage{amsmath, amssymb, amscd, verbatim, xspace,amsthm}
\usepackage{latexsym, epsfig, color}

\newcommand{\ra}{\rightarrow}

\newcommand\Hom{\operatorname{Hom}}

\newcommand\Sym{\operatorname{Sym}}

\newcommand\isom{\cong}

\newcommand\Spec{\operatorname{Spec}}
\newcommand\Proj{\operatorname{Proj}}

\newcommand\bq{\begin{equation}}
\newcommand\eq{\end{equation}}

\newtheorem{proposition}{Proposition}[section]
\newtheorem{theorem}[proposition]{Theorem}
\newtheorem{corollary}[proposition]{Corollary}

\newtheorem{lemma}[proposition]{Lemma}

\newtheorem*{claim}{Claim}

\theoremstyle{definition}
\newtheorem{definition}[proposition]{Definition}
\newtheorem{notation}[proposition]{Notation}

\theoremstyle{remark}
\newtheorem{remark}[proposition]{Remark}
\usepackage{url}

\numberwithin{equation}{section}


\newcommand{\cut}[1]{}




\newcommand\hidden[1]{}




\newcommand{\cC}{{\mathcal{C}}}

\newcommand{\cM}{\mathcal{M}}

\newcommand{\PP}{\mathbb{P}}

\newcommand{\RR}{\mathbb{R}}


                                                                      %
\newcommand{\isomto}{\stackrel{\isom}{\to}}                           %
\newcommand{\setmin}{{\smallsetminus}}                                %
\newcommand{\Mp}{{\mathcal{M}^+}}                                     %
\newcommand{\Rel}{{\mathcal{R}}}                                      %
\newcommand{\lar}{\longleftarrow}                                     %
\newcommand{\Span}{\operatorname{Span}}                               %
\newcommand{\ZZ}{{\mathbb{Z}}}                                        %
\newcommand{\LL}{{\mathbb{L}}}                                        %
\newcommand{\cO}{{\mathcal O}}                                        %
\newcommand{\ch}{\operatorname{\tilde{c}_1}}                          %
\newcommand{\St}{\operatorname{St}}                                   %
\newcommand{\res}{\operatorname{res}}                                 %
\newcommand{\Po}{Poincar\'e }                                         %
                                                                      %
                                                                      %
                                                                      %
                                                                      %

                                                                      %
\newcommand{\Schk}{\operatorname{Sch}_{k}}                            %
\newcommand{\Schkg}{{G}{\textnormal{-}}{\operatorname{Sch}_{k}}}      %
\newcommand{\Smk}{\operatorname{Sm}_{k}}                              %
\newcommand{\Ab}{\operatorname{Ab}_{*}}                               %
\newcommand{\id}{\operatorname{Id}}                                   %
                                                                      %
                                                                      %
                                                                      %
                                                                      %
                                                                      %
\newcommand{\fxy}{f:X \ra Y}                                          %
\newcommand{\fyx}{f:Y \ra X}                                          %
\newcommand{\fxz}{f:X \ra Z}                                          %
\newcommand{\fzx}{f:Z \ra X}                                          %
\newcommand{\gyx}{g:Y \ra X}                                          %
\newcommand{\gyz}{g:Y \ra Z}                                          %
                                          %
                                                                      %
                                                                      %
                                                                      %
                                                                      %
                                                                      %
                                                                      %
                                                                      %

\title{Bivariant Algebraic Cobordism}

\author{Jos\'e Luis Gonz\'alez and Kalle Karu}
\address{J.L. Gonz\'alez,  Dept. of Mathematics, University of British Columbia,
  Vancouver, BC V6T1Z2, CANADA  \newline \indent
K. Karu,
Dept. of Mathematics, University of British Columbia, 
  Vancouver, BC V6T1Z2, CANADA} 
\email{jgonza@math.ubc.ca, karu@math.ubc.ca}
\thanks{This research was funded by NSERC Discovery and Accelerator grants.}

\begin{document}

\begin{abstract}
We associate a bivariant theory to any suitable oriented Borel-Moore homology theory on the category of algebraic schemes or the category of algebraic $G$-schemes.
%
%
Applying this to the theory of algebraic cobordism yields operational cobordism rings and operational $G$-equivariant cobordism rings associated to all schemes in these categories.  
In the case of toric varieties, the operational $T$-equivariant cobordism ring may be described as the ring of piecewise graded power series on the fan with coefficients in the Lazard ring.
\end{abstract}
\maketitle
\setcounter{tocdepth}{1} 
\tableofcontents



\section{Introduction}

The purpose of this article is to study the \emph{operational bivariant theory} $B^*$ associated to a \emph{refined oriented Borel-Moore pre-homology theory} $B_*$, and the equivariant versions of these theories. We apply this to the algebraic cobordism theory $\Omega_*$ of Levine and Morel \cite{Levine-Morel} to construct the \emph{operational bivariant cobordism theory} $\Omega^*$. As an application, we describe the \emph{operational} $T$-\emph{equivariant cobordism} $\Omega^*_T(X_\Delta)$ for a toric variety $X_\Delta$.

Bivariant theories were defined by Fulton and MacPherson \cite{Fulton-MacPherson, FultonIT}. A bivariant theory assigns a group $B^*(X\to Y)$ to every morphism $X\to Y$ of schemes. The theory contains both a covariant homology theory $B_*(X) = B^*(X\to pt)$ and a contravariant cohomology theory $B^*(X) = B^*(\id_X: X\to X)$, but the bivariant theory can be more general in the sense that there may be invariants of the theory that are not determined by homology and cohomology alone \cite{Fulton-MacPherson}. Starting with a suitable homology theory $B_*(X)$, one can extend it to a bivariant theory $B^*$. The definition of this bivariant theory is a generalization of the construction of Fulton and MacPherson in \cite{Fulton-MacPherson, FultonIT} of a bivariant theory $A^{*}$ associated to the Chow theory $A_{*}$. The elements of  $B^*(X\to Y)$ are certain compatible operators on the homology groups $B_*$, hence the bivariant theory is called the operational bivariant theory. The only bivariant theories we consider in this paper are the operational ones. An operational bivariant theory can be 
viewed as a method of constructing a cohomology theory $B^*$ out of a homology theory $B_*$. The cohomology theory takes values in rings, hence there is a well-defined intersection product in this theory. The Chern class operators naturally lie in the cohomology $B^*$.

The operational cohomology theory $B^*$ at first seems very intractable. A single element of $B^*(X)$ is defined by an infinite set of homomorphisms. However, Kimura \cite{Kimura} has shown that, in case of Chow theory $A_*$, the bivariant cohomology groups $A^*(X)$ for an arbitrary variety $X$ can often be computed  if one knows the homology groups $A_*(Y)$ for smooth varieties $Y$. Payne \cite{Payne} carried out this computation for the equivariant Chow cohomology $A^*_T$ of toric varieties. By a result of Brion \cite{Brion}, the $T$-equivariant Chow ring $A^*_T(X_\Delta)$ of a smooth toric variety $X_\Delta$ can be identified with the group of integral piecewise polynomial functions on the fan $\Delta$. Payne showed that the ring of such functions on an arbitrary fan $\Delta$ gives the operational $T$-equivariant Chow cohomology $A^*_T(X_\Delta)$. A similar computation in the case of $K$-theory is done by Anderson and Payne \cite{Anderson-Payne}. Brion and Vergne \cite{Brion-Vergne} (see also Vezzosi and Vistoli \cite{Vezzosi-Vistoli}) have proved that the $T$-
equivariant $K$-theory ring of a smooth toric variety $X_\Delta$ is isomorphic to the ring of integral piecewise exponential functions on the fan $\Delta$. Anderson and Payne show that for an arbitrary fan $\Delta$ this ring gives the operational $T$-equivariant $K$-cohomology of the variety $X_\Delta$. 

One of the goals of this article is to extend the results of Payne and Anderson-Payne to the case of algebraic cobordism. The $T$-equivariant algebraic cobordism of smooth toric varieties was computed by Krishna and Uma \cite{Krishna-Uma}. Using the same terminology as in the case of Chow theory and $K$-theory, the equivariant cobordism ring $\Omega^*_T(X_\Delta)$ of a smooth toric variety $X_\Delta$ can be identified with the ring of piecewise graded power series on the fan $\Delta$, with coefficients in the Lazard ring $\LL$. We will prove that for a (quasiprojective) fan $\Delta$ the same ring is isomorphic to the operational $T$-equivariant cobordism ring $\Omega^*_T(X_\Delta)$ of the variety $X_\Delta$.

We start by constructing the operational bivariant theory $B^*$ for as large a class of homology theories $B_*$ as possible. To carry out the construction of the operational bivariant theory, it suffices to assume that $B_*$ is a \emph{refined oriented Borel-Moore pre-homology theory} (ROBM pre-homology theory). This is a weakening of the notion of oriented Borel-Moore homology theory \cite[Definition 5.3.1]{Levine-Morel} with refined Gysin homomorphisms, where we do not require the projective bundle, extended homotopy and the cellular decomposition properties. The various constructions can be summarized by a diagram as follows:
 \[ \begin{CD}  
 B_*^G @>>> B^*_G  \\
@AAA @AAA\\
 B_* @>>>  B^*.\\
\end{CD} \]
Each horizontal arrow associates to an ROBM pre-homology theory its operational bivariant theory.
This step can be applied to an arbitrary ROBM pre-homology theory $B_*$, including its equivariant version $B_*^G$ for a linear algebraic group $G$. The vertical arrows associate to a theory its $G$-equivariant version using Totaro's algebraic approximation of the Borel construction from topology \cite{Totaro}. For these constructions to be well-defined, we need to assume
that the ROBM pre-homology theory $B_*$ has the localization and homotopy properties. The construction of $B_*^G$  is a direct generalization of similar constructions in Chow theory by Totaro \cite{Totaro} and  by  Edidin and Graham \cite{Edidin-Graham}, and in algebraic cobordism by Krishna \cite{Krishna} and by Heller and Malag\'on-L\'opez \cite{Heller-Malagon}. 

We will prove that the above square commutes; more precisely, the two ways to construct $B^*_G$ agree if we assume that the original theory $B_*$ has certain exact descent sequences for envelopes. Such sequences were first proved by Gillet \cite{GilletSeq} in $K$-theory and Chow theory, and they were used by Edidin and Graham \cite{Edidin-Graham} to prove the commutativity of the square above for Chow theory. The Edidin-Graham proof can be generalized to an arbitrary ROBM pre-homology theory, but the descent property depends on the theory. The descent property for the algebraic cobordism theory was proved in \cite{descentseq}. 

The descent property in the Chow theory was used by Kimura in \cite{Kimura} to give an inductive construction of operational Chow cohomology classes. We will generalize Kimura's proofs to arbitrary ROBM pre-homology theories that satisfy the descent property.

Levine and Morel \cite{Levine-Morel} showed that the algebraic cobordism theory is universal among all oriented Borel-Moore homology theories. We can not prove any similar universality statement for the operational bivariant theory or its cohomology. Yokura \cite{Yokura} has proposed a geometric method for constructing a bivariant algebraic cobordism theory $\tilde{\Omega}^*$, which would be universal among a class of  oriented bivariant theories. The homology of this bivariant theory $\tilde{\Omega}^*$ is expected to be the algebraic cobordism $\Omega_*$.  By universality, there should exist a natural transformation from Yokura's bivariant $\tilde{\Omega}^*$ to the operational $\Omega^*$, restricting to an isomorphism between the homology theories.  To relate these two theories would then be an interesting problem.

The paper is organized as follows. We define refined oriented Borel-Moore pre-homology theories in \S\,\ref{section.definition.obm}. In \S\,\ref{section.bivariant.theories} we define bivariant theories and associate the operational bivariant theory $B^{*}$ to any ROBM pre-homology theory $B_{*}$ in the categories $\Schk$ and $\Schkg$, which among other properties has the original theory $B_{*}$ as its associated homology theory (see Proposition~\ref{proposition.homology}) and has the Poincar\'e duality isomorphism between homology and cohomology in the nonsingular case (see Proposition~\ref{poincare}). In \S\,\ref{section.equivariant.theories} we start from any ROBM pre-homology theory $B_{*}$ on $\Schk$ that satisfies the localization and homotopy properties and construct the $G$-equivariant ROBM pre-homology theory $B^{G}_{*}$ on $\Schkg$ by taking a limit over successively better approximations of the Borel construction. In \S\,\ref{section.kimura.bivariant} we show that if $B_{*}$ has exact descent sequences (\ref{sequence.envelopes}), then the computation of bivariant classes can be inductively reduced to the nonsingular case (see Theorem~\ref{thm-biv-image} and Theorem~\ref{thm-seq2-biv}), and that furthermore the operational equivariant theory $B^{*}_{G}$ can alternatively be computed by applying the limit construction directly to the operational theory $B^{*}$ (see Proposition~\ref{proposition.limit}).

In \S\,\ref{section.overview.cobordism} we overview the theory of algebraic cobordism $\Omega_{*}$. We conclude this article in \S\,\ref{section.cobordism.toric} by showing in Theorem~\ref{thm.toric} that the operational $T$-equivariant cobordism ring of a toric variety can be described as the ring of piecewise graded power series on the fan with coefficients in the Lazard ring.


\subsection*{Acknowledgments}
The authors would like to thank Dave Anderson, William Fulton and Sam Payne for insightful conversations. This work started by trying to understand the operational equivariant K-theory of toric varieties constructed by Dave Anderson and Sam Payne.




\section{Refined Oriented Borel-Moore pre-Homology Theories}   \label{section.definition.obm}

\subsection{Notation and Conventions}

\subsubsection{} \label{setting}

Throughout this article all of our schemes will be defined over a fixed field $k$. We denote by $\Schk$ the category of separated finite type schemes over $\Spec k$ and by $\Schk'$ the subcategory of $\Schk$ with the same objects but whose morphisms are the projective morphisms. We denote by $\Smk$ the full subcategory of $\Schk$ of smooth and quasiprojective schemes. By a smooth morphism we always mean a smooth and quasiprojective morphism.
$\Ab$ will denote the category of graded abelian groups.

\subsubsection{} \label{equivariant.setting}

Let $G$ be a linear algebraic group. A $G$-linearization of a line bundle $f:L \ra X$ over the $G$-scheme $X$, is a $G$-action $\Phi:G \times L \ra L$ on $L$ such that $f$ is $G$-equivariant and for every $x \in X$ and $g \in G$ the action map $\Phi_{g}:L_{x} \ra L_{gx}$ is linear. We denote by $\Schkg$ the category whose objects are the separated finite type $G$-schemes over $\Spec k$ that admit an ample $G$-linearizable line bundle and whose morphisms are $G$-equivariant morphisms. We denote by ${G}{\textnormal{-}}{\operatorname{Sch}_{k}'}$ the subcategory of $\Schkg$ with the same objects but whose morphisms are the projective $G$-equivariant morphisms. Note that all schemes in $\Schkg$ are assumed to be quasiprojective; this is needed in the construction of equivariant theories using the GIT quotients.

\subsubsection{} \label{assumptions.field}

In \S\S\,\ref{section.kimura.bivariant}-\ref{section.cobordism.toric} we will assume that $k$ has characteristic zero and in \S\S\,\ref{construction.induced.equivariant.OBM}-\ref{question.oe.eo} 
we will assume that $k$ is infinite. The assumption on the characteristic of $k$ is only meant to guarantee the existence of smooth projective envelopes in the categories $\Schk$ and $\Schkg$ and to provide the setting for the use of Levine-Pandharipande's version of algebraic cobordism, which requires resolution of singularities by projective morphisms, weak factorization for birational maps and some Bertini-type theorems that hold in characteristic zero. The assumption on the cardinality of $k$ is only used explicitly in the proof of Proposition~\ref{proposition.homotopy.refined}.

\subsubsection{}
We call a morphism $\fzx$ in one of the categories $\mathcal{C}=\Schk$ or $\mathcal{C}=\Schkg$ a locally complete intersection morphism in $\mathcal{C}$ or simply an l.c.i. morphism in $\mathcal{C}$, if there exist a regular embedding $i:Z \ra Y$ and a smooth morphism $g: Y \ra X$, with $g$ and $i$ in $\mathcal{C}$ such that $f=gi$. When we work in the category $\Schkg$ and we say that a morphism $f$ is an equivariant l.c.i. morphism or simply an l.c.i. morphism we mean that $f$ is an l.c.i. morphism in $\Schkg$. We follow the convention that smooth morphisms, and more generally l.c.i. morphisms, are assumed to have a relative dimension. If $\fxy$ is an l.c.i. morphism of relative dimension $d$ (or relative codimension $-d$) and $Y$ is irreducible, then $X$ is a scheme of pure dimension equal to $\operatorname{dim}Y+d$.


\subsection{ROBM pre-Homology Theories}

\subsubsection{} \label{conventions}
For simplicity, we unify the treatment of the cases when the ambient category is $\Schk$ or $\Schkg$ for some algebraic group $G$. Therefore, through the rest of this section we fix the category $\mathcal{C}$, which is either $\Schk$ or $\Schkg$, and we assume that all the schemes and morphisms are in $\mathcal{C}$ (e.g. the statement \emph{for any morphism} should be interpreted as \emph{for any morphism in $\mathcal{C}$}). Likewise, when $\mathcal{C}=\Schkg$, by an l.c.i. morphism we mean an equivariant l.c.i. morphism. The category $\mathcal{C}'$ is defined to be $\Schk'$ or ${G}{\textnormal{-}}{\operatorname{Sch}_{k}'}$, depending on whether $\mathcal{C}$ is equal to $\Schk$ or $\Schkg$, respectively.

Let us start by recalling the definition of a Borel-Moore functor on $\cC$ and several extra structures on it from \cite{Levine-Morel}.

\begin{definition}
A \emph{Borel-Moore functor} on $\cC$ is given by:

$\operatorname{(D_{1})}$ An \emph{additive} functor $H_{*}: \cC' \ra \Ab$, i.e., a functor $H_{*}: \cC' \ra \Ab$ such that for any finite family $(X_{1},\ldots,X_{r})$ of schemes in $\cC'$, the morphism
	\[
\bigoplus_{i=1}^{r}H_{*}(X_{i}) \ra H_{*}(\coprod_{i=1}^{r}X_{i})
\]
induced by the projective morphisms $X_{i} \subseteq \coprod_{i=1}^{r}X_{i}$ is an isomorphism.

$\operatorname{(D_{2})}$ For each smooth equidimensional morphism $\fyx$ of relative dimension $d$ in $\cC$ a homomorphism of graded groups
	\[
f^{*}:H_{*}(X)\ra H_{*+d}(Y).	
\]

These data satisfy the following axioms:

$\operatorname{(A_{1})}$ For any pair of composable smooth equidimensional morphisms $(\fyx, g:Z \ra Y)$ of relative dimensions $d$ and $e$ respectively, one has 
	\[
(f \circ g)^{*} = g^{*} \circ f^{*}: H_{*}(X)\ra H_{*+d+e}(Z).	
\]
In addition, $\id_{X}^{*}=\id_{H_{*}(X)}$ for any $X \in \cC$.

$\operatorname{(A_{2})}$ For any projective morphism $\fxz$ and any smooth equidimensional morphism $\gyz$, if one forms the fiber diagram
\[
\xymatrix{
W \ar[d]^{f'} \ar[r]^{g'} &X \ar[d]^f \\
Y \ar[r]^g&Z }
\]
then
	\[
g^{*}f_{*}=f'_{*}g'^{*}.
\]

\end{definition}

\begin{notation} For each projective morphism $f$ the homomorphism $H_{*}(f)$ is denoted $f_{*}$ and called the \emph{push-forward along} $f$. For each smooth equidimensional morphism $g$ the homomorphism $g^{*}$ is called the \emph{pull-back along} $g$.
\end{notation}

%
%

\begin{definition} \label{def-ext-prod}
A \emph{Borel-Moore functor with exterior product} on $\cC$ consists of a Borel-Moore functor $H_{*}$ on $\cC$, together with:

$\operatorname{(D_{3})}$ An element $1 \in H_{0}(\Spec k)$ and for each pair $(X,Y)$ of schemes in $\cC$, a bilinear graded pairing (called the \emph{exterior product})
	\begin{align*}
\times: H_{*}(X) \times H_{*}(Y) &\ra H_{*}(X \times Y)   \\
(\alpha, \beta)	&\mapsto \alpha \times \beta
\end{align*}
which is (strictly) commutative, associative, and admits $1$ as unit.

These satisfy

$\operatorname{(A_{3})}$ Given projective morphisms $f:X\ra X'$ and $g:Y\ra Y'$ one has that for any classes $\alpha \in H_{*}(X)$ and $\beta \in H_{*}(Y)$ 
	\[
	(f \times g)_{*}(\alpha \times \beta)   =    f_{*}(\alpha) \times g_{*}(\beta)   \in H_{*} (X' \times Y').
\]

$\operatorname{(A_{4})}$ Given smooth equidimensional morphisms $f:X\ra X'$ and $g:Y\ra Y'$ one has that for any classes $\alpha \in H_{*}(X')$ and $\beta \in H_{*}(Y')$
	\[
	(f \times g)^{*}(\alpha \times \beta)   =    f^{*}(\alpha) \times g^{*}(\beta)  \in H_{*} (X \times Y).
\]

\end{definition}

\begin{remark}
Given a Borel-Moore functor with exterior product $H_{*}$, the axioms give $H_{*}(\Spec k)$ a commutative, graded ring structure, give to each $H_{*}(X)$ the structure of $H_{*}(\Spec k)$-module, and imply that the operations $f_{*}$ and $f^{*}$
preserve the $H_{*}(\Spec k)$-module structure. 
\end{remark}

%
%

\begin{definition}
A \emph{Borel-Moore functor with intersection products} on $\cC$ 
is a Borel-Moore functor $H_{*}$ on $\cC$, together with:

$\operatorname{(D_{4})}$ For each l.c.i. morphism $\fzx$ of relative codimension $d$ and any morphism $\gyx$ giving the fiber diagram 
\[
\xymatrix{
W \ar[d]^{g'} \ar[r]^{f'} &Y \ar[d]^g \\
Z \ar[r]^f&X, }
\]
a homomorphism of graded groups
	\[
f^{!}_{g}: H_{*}(Y)\ra H_{*-d}(W).	
\]

These satisfy 

$\operatorname{(A_{5})}$ If $f_{1}:Z_{1}\ra X$ and $f_{2}:Z_{2}\ra Z_{1}$ are l.c.i. morphisms and $g:Y\to X$ any morphism giving the fiber diagram
\[
\xymatrix{
W_{2} \ar[d]^{} \ar[r]^{f'_{2}} &W_{1} \ar[d]^{g'} \ar[r]^{f'_{1}} &Y\ar[d]^{g}\\
Z_{2} \ar[r]^{f_{2}} &Z_{1} \ar[r]^{f_{1}} &X,}
\]
one has $( f_{1} \circ f_{2} )^{!}_{g}  = {(f_{2})}^{!}_{g'}  \circ {(f_{1})}^{!}_{g}$.

$\operatorname{(A_{6})}$ If $f_{1}:Z_{1} \ra X_{1}$ and $f_{2}: Z_{2} \ra X_{2}$ are l.c.i. morphisms of relative codimensions $d$ and $e$, respectively, and $h_{1}: Y \ra X_{1} $ and $h_{2}: Y \ra X_{2} $ are arbitrary morphisms giving the fiber diagram
\[
\xymatrix{
W \ar[d]^{ } \ar[r]^{} &W_{2} \ar[d]^{f_{2}'} \ar[r]^{} &Z_{2} \ar[d]^{f_{2}}\\
W_{1} \ar[d]^{} \ar[r]^{f_{1}'} &Y \ar[d]^{h_{1}} \ar[r]^{h_{2}} &X_{2} \\
Z_{1} \ar[r]^{f_{1}}&X_1  & }
\]
one has $ (f_{1})^{!}_{h_{1}f'_{2}} \circ (f_{2})^{!}_{h_{2}}  = {(f_{2})}^{!}_{h_{2}f'_{1}}  \circ {(f_{1})}^{!}_{h_{1}}: B_{*}Y \ra B_{*-d-e}W$.

$\operatorname{(A_{7})}$ For any smooth morphism $\fyx$ one has $f^{!}_{\id_{X}}=f^{*}$.

For any l.c.i morphism $\fzx$, any morphism $\gyx$ and any morphism $h: Y' \ra Y$, if one forms the fiber diagram 
\[
\xymatrix{
W' \ar[d]^{h'} \ar[r]^{f''} &Y' \ar[d]^h \\
W \ar[d]^{g'} \ar[r]^{f'} &Y \ar[d]^g \\
Z \ar[r]^f&X }
\]
then

$\operatorname{(A_{8})}$ If $g$ and $f$ are \emph{Tor-independent} in $\Schk$ (i.e., if $\operatorname{Tor}_{j}^{\mathcal{O}_{X}}(\mathcal{O}_{Y},\mathcal{O}_{Z})=0$ for all $j>0$) then
	\[
f^{!}_{gh} =	(f')^{!}_{h}
\]

$\operatorname{(A_{9})}$ If $h$ is projective then $f^{!}_{g} \circ h_{*} = h'_{*} \circ f^{!}_{gh} $.

$\operatorname{(A_{10})}$ If $h$ is smooth equidimensional then $f^{!}_{gh} \circ h^{*} = h'^{*} \circ f^{!}_{g} $.

\end{definition}

\begin{notation}
Given a Borel-Moore functor with intersection products $H_{*}$, for any l.c.i. morphism $f: Z\to X$ of relative codimension $d$, the map $f^!_{\id_X}: H_*(X)\to H_{*-d}(Z)$ is called the \emph{l.c.i. pull-back along} $f$ and denoted $f^*$. For each l.c.i morphism $f:Z\ra X$ and each morphism $g: Y \ra X$ we call the morphism $f^{!}_{g}$ \emph{the refined l.c.i. pull-back along} $f$ \emph{associated to} $g$. We will usually denote $f^{!}_{g}$ simply by $f^{!}$ with an indication of where it acts. When the l.c.i. morphism $f$ is a regular embedding then  $f^{!}_{g}$ is called a refined Gysin homomorphism. Refined Gysin homomorphisms and smooth pull-backs can be composed to construct all refined l.c.i. pull-backs.
\end{notation}

%
%

\begin{definition} A \emph{Borel-Moore functor with compatible exterior and intersection products} on $\cC$ consists of a Borel-Moore functor $H_{*}$ on $\cC$ endowed with exterior products and intersection products that in addition satisfy

$\operatorname{(A_{11})}$ If for $i=1$ and $i=2$, $f_{i}:Z_{i}\ra X_{i}$ is an l.c.i. morphism and $g_{i}:Y_{i}\ra X_{i}$ is an arbitrary morphism and one forms the fiber diagram 
\[
\xymatrix{
W_{i} \ar[d]^{g_{i}'} \ar[r]^{f_{i}'} &Y_{i} \ar[d]^{g_{i}} \\
Z_{i} \ar[r]^{f_{i}}&X_{i}, }
\]
one has that for any classes $\alpha_{1} \in H_{*}(Y_{1})$ and $\alpha_{2} \in H_{*}(Y_{2})$
	\[
	(f_{1} \times f_{2})_{g_{1} \times g_{2}}^{!} (\alpha_{1} \times \alpha_{2})   =    (f_{1})_{g_1}^{!}(\alpha_{1}) \times (f_{2})_{g_2}^{!}(\alpha_{2})  \in H_{*} (W_{1} \times W_{2}).
\]
\end{definition}

\begin{notation}
We will call a Borel-Moore functor with compatible exterior and intersection products a {\em refined oriented Borel-Moore pre-homology theory} (ROBM pre-homology theory, for short). 
\end{notation}

Examples of ROBM pre-homology theories are Chow theory $\operatorname{A}_{*}$ (see \cite{FultonIT}), $K$-theory (i.e. the Grothendieck $K$-group functor $\operatorname{G}_{0}$ of the category of coherent $\mathcal{O}_{X}$-modules, graded by $\operatorname{G}_{0} \otimes_{\ZZ} \ZZ[\beta,\beta^{-1}]$ -see \cite[Example 2.2.5]{Levine-Morel}-), and algebraic cobordism $\Omega_{*}$ (see \cite{Levine-Morel}) on the category $\Schk$; and equivariant Chow theory $A_*^G$ and equivariant algebraic cobordism $\Omega_*^G$ on the category $\Schkg$ constructed as in \S\,\ref{section.equivariant.theories} (see \cite{Edidin-Graham}, \cite{Krishna} and \cite{Heller-Malagon}).

Levine and Morel in \cite{Levine-Morel} consider the notion of an \emph{oriented Borel-Moore homology theory}, which is an ROBM pre-homology theory but with l.c.i. pull-backs only instead of refined l.c.i. pull-backs, and with additional axioms called projective bundle, extended homotopy and cellular decomposition properties. Because of the refined l.c.i. pull-backs, an oriented Borel-Moore homology theory is not necessarily an ROBM pre-homology theory. However, one can construct refined l.c.i. pull-backs from ordinary l.c.i. pull-backs by deformation to the normal cone argument of Fulton and MacPherson, provided that the theory additionally satisfies the homotopy and localization properties (see \S\,\ref{section.equivariant.theories} for these properties). We will need the homotopy and localization properties when working with equivariant theories, hence an alternative theory that is sufficient for the constructions below would be a Borel-Moore functor with compatible l.c.i. pull-backs and exterior products, that additionally 
satisfies the homotopy and localization properties.

\begin{definition}      \label{definition.obm.first.chern.class}
 If $H_{*}$ is an ROBM pre-homology theory, for any line bundle $L \ra Y$ in $\mathcal{C}$ with zero section $s:Y\ra L$ one defines the operator $\ch(L):H_{*}(Y)\ra H_{*-1}(Y)$ by $\ch(L)=s^{*}s_{*}$, and calls it the \emph{first Chern class operator of} $L$.
\end{definition}


\subsection{Cohomology theory}\label{sec-cohom}

Let $H_*$ be an ROBM pre-homology theory. For a smooth scheme $X$ of pure dimension $n$, define 
\[ H^*(X) = H_{n-*}(X).\]
For an arbitrary smooth scheme we extend this notion by taking the direct sum over pure dimensional parts of $X$. 

The groups $H^*(X)$ are commutative graded rings with unit, with product defined by 
\begin{alignat*}{4}
 H^*(X) &\times H^*(X) &\to& H^*(X) \\
(a&,b) &\mapsto& \Delta_X^*(a\times b),
\end{alignat*}
where $\Delta_X:X\to X\times X$ is the diagonal map and $\Delta_X^*$ is the l.c.i. pull-back. Associativity of the product follows from $\operatorname{(A_{5})}$ and $\operatorname{(A_{11})}$ applied to two different ways to construct the diagonal $X\to X\times X\times X$ by composing $\Delta_X$.

Let $\pi: X\to \Spec k$ be the structure morphism and define $1_X = \pi^*(1)\in H^0(X)$, where $1= 1_{\Spec k}\in H^0(\Spec k)$ is the element specified in $\operatorname{(D_{3})}$. Then $1_X$ is the multiplicative identity in the ring $H^*(X)$.

Axioms $\operatorname{(A_{5})}$ and $\operatorname{(A_{11})}$ imply that if $f:X\to Y$ is an l.c.i. morphism between smooth schemes, then $f^*: H^*(Y)\to H^*(X)$ is a homomorphism of graded rings with unit. Thus, we may view $H^*$ as a contravariant functor from the category of smooth schemes and l.c.i. morphisms in $\cC$ to the category of commutative graded rings with unit. In the next section we extend this functor to the whole category $\cC$.


\section{Operational Bivariant Theories}   \label{section.bivariant.theories}

In this section we consider a refined oriented Borel-Moore pre-homology theory $B_{*}$ on one of the categories $\mathcal{C}=\Schk$ or $\mathcal{C}=\Schkg$, and associate to it a bivariant theory $B^{*}$ on $\mathcal{C}$. We present a unified treatment of these two cases. Therefore throughout this section we fix one of these two categories and denote it by $\mathcal{C}$, and we assume that all the schemes and morphisms are in $\mathcal{C}$ following the conventions described in \ref{conventions}.  The constructions that we present in this section follow the ideas in \cite[Chapter 17]{FultonIT} where a bivariant theory $A^{*}$ is constructed for the Chow theory $A_{*}$. Some definitions and proofs have been modified to adapt them to our more general setting.


\subsection{Bivariant theories} \label{subsection.operations.bivariant}

A \emph{bivariant theory} $B^*$ on $\cC$ assigns to each morphism $f:X\to Y$ in $\cC$ a graded abelian group $B^*(X\to Y)$. The groups $B^*(X \rightarrow Y)$ are endowed with three operations called product, push-forward and pull-back, which are mutually compatible and admit units:

$\operatorname{(P_{1})}$ \emph{Product}. For all morphisms $f:X\ra Y$ and $g:Y\ra Z$, and all integers $p$ and $q$, there is a homomorphism
\[
B^p(X \xrightarrow{f} Y) \otimes B^q(Y \xrightarrow{g} Z) \stackrel{\cdot}{\longrightarrow} B^{p+q}(X \xrightarrow{gf} Z).
\]
The image of $c\otimes d$ is denoted $c \cdot d$.

$\operatorname{(P_{2})}$ \emph{Push-forward}. If $f:X \ra Y$ is a projective morphism, $g:Y\ra Z$ is any morphism and $p$ is an integer, there is a homomorphism
\[
f_{*}: B^p(X \xrightarrow{gf} Z) \xrightarrow{} B^p(Y \xrightarrow{g} Z).
\]

$\operatorname{(P_{3})}$ \emph{Pull-back}. If $f:X \ra Y$ and $g:Y' \ra Y$ are arbitrary morphisms, $f':X'=X\times_{Y}Y' \ra Y'$ is the projection and $p$ is an integer, there is a homomorphism
\[
g^{*}: B^p(X \xrightarrow{f} Y) \xrightarrow{} B^p(X' \xrightarrow{f'} Y').
\]

$\operatorname{(U)}$ \emph{Units}. For each $X$ there is an element $1_{X} \in B^{0}(X \xrightarrow{\id} X)$, such that $\alpha \cdot 1_{X} = \alpha$ and $1_{X} \cdot \beta = \beta$, for all morphisms $W \ra X$ and $X \ra Y$, and all classes $\alpha \in B^*(W \ra X)$ and $\beta \in B^*(X \ra Y)$. These unit elements are compatible with pull-backs, i.e., $g^{*}(1_{X})=1_{Z}$ for all morphisms $g: Z \ra X$.

These operations are required to satisfy the following seven compatibility properties:

$\operatorname{(B_{1})}$ \emph{Associativity of products}. If $c \in B^{*}(X \ra Y)$, $d \in B^{*}(Y \ra Z)$ and $e \in B^{*}(Z \ra W)$, then
\[
(c \cdot d) \cdot e = c \cdot (d \cdot e) \in B^{*}(X \ra W).
\]

$\operatorname{(B_{2})}$ \emph{Functoriality of push-forwards}. If $c \in B^{*}(X \ra Y)$, then ${\id_{X}}_{*}c =c \in B^{*}(X \ra Y)$. Moreover, if $f:X\ra Y$ and $g:Y\ra Z$ are projective morphisms, $Z \ra W$ is arbitrary, and $d \in B^{*}(X \ra W)$, then
\[
(gf)_{*}d = g_{*}(f_{*}d) \in B^{*}(Z \ra W).
\]

$\operatorname{(B_{3})}$ \emph{Functoriality of pull-backs}. If $c \in B^{*}(X \ra Y)$, then $\id_{Y}^{*}c = c \in B^{*}(X \ra Y)$. Moreover, if $f:X \ra Y$, $g:Y' \ra Y$ and $h: Y'' \ra Y'$ are arbitrary morphisms, $X''=X \times_{Y}Y'' \ra Y''$ is the projection, and $d \in B^{*}(X \ra Y)$, then
\[
(gh)^{*}d = h^{*}(g^{*}d) \in B^{*}(X'' \ra Y'').
\]

$\operatorname{(B_{12})}$ \emph{Product and push-forward commute}. If $f:X \ra Y$ is projective, $Y \ra Z$ and $Z \ra W$ are arbitrary, and $c \in B^{*}(X \ra Z)$ and $d \in B^{*}(Z \ra W)$, then
\[
f_*(c) \cdot d = f_*(c \cdot d) \in B^{*}(Y \ra W).
\]

$\operatorname{(B_{13})}$ \emph{Product and pull-back commute}. If $c \in B^{*}(X \xrightarrow{f} Y)$ and $d \in B^{*}(Y \xrightarrow{g} Z)$, and $h:Z'\ra Z$ is arbitrary, and one forms the fiber diagram
\[
\xymatrix{
X' \ar[d]^{h''} \ar[r]^{f'} &Y' \ar[d]^{h'} \ar[r]^{g'} &Z'\ar[d]^{h}\\
X \ar[r]^f&Y \ar[r]^g &Z,}
\]
then
\[
(h)^{*}(c \cdot d) = h'^*(c)\cdot h^*(d) \in B^{*}(X' \ra Z').
\]

$\operatorname{(B_{23})}$ \emph{Push-forward and pull-back commute}. If $f:X \ra Y$ is projective, $g:Y\ra Z$ and $h:Z'\ra Z$ are arbitrary morphisms, and $c \in B^{*}(X \ra Z)$, and one forms the fiber diagram
\[
\xymatrix{
X' \ar[d]^{h''} \ar[r]^{f'} &Y' \ar[d]^{h'} \ar[r]^{g'} &Z'\ar[d]^{h}\\
X \ar[r]^f&Y \ar[r]^g &Z,}
\]
then
\[
h^*(f_*c) = f'_{*}(h^{*}c) \in B^{*}(Y' \ra Z').
\]

$\operatorname{(B_{123})}$ \emph{Projection formula}. If $f:X \ra Y$ and $g:Y\ra Z$ are arbitrary morphisms, $h':Y'\ra Y$ is projective and $c \in B^{*}(X \ra Y)$ and $d \in B^{*}(Y' \ra Z)$, and one forms the fiber diagram
\[
\xymatrix{
X' \ar[d]^{h''} \ar[r]^{f'} &Y' \ar[d]^{h'} &   \\
X \ar[r]^f&Y \ar[r]^g &Z,}
\]
then
\[
c \cdot h'_{*}(d) = h''_{*}( {h'}^{*}(c) \cdot d) \in B^{*}(X \ra Z).
\]

The group $B^p(X \xrightarrow{f} Y)$ may be denoted by simply by $B^p(X \rightarrow Y)$ or $B^p(f)$. 
We will denote by $B^*(X \xrightarrow{f} Y)$, $B^*(X \rightarrow Y)$ or $B^*(f)$ the direct sum of all $B^p(X \xrightarrow{f} Y)$, for $p \in \mathbf{Z}$.


\subsection{Homology and Cohomology} \label{sec-hom-cohom}

A bivariant theory $B^*(X\to Y)$ contains both a covariant homology theory $B_*(X)$ and a contravariant cohomology theory $B^*(X)$. 

The homology is defined by $B_p(X) = B^{-p}(X\to \Spec k)$. For any projective morphism $f:X\to Y$, the push-forward in the bivariant theory defines the functorial  push-forward map in homology $f_*: B_*(X)\to B_*(Y)$.

The cohomology is defined by $B^p(X) = B^p(\id_X: X\to X)$. The product operation in the bivariant theory turns $B^*(X)$ into a graded ring with unit $1_{X}$ and turns $B_*(X)$ into a graded left module over $B^*(X)$. The product operation $B^*(X)\times B_*(X) \to B_*(X)$ is called cap product and denoted $(\alpha,\beta)\mapsto \alpha\cap\beta$. For any morphism $f: X\to Y$, the pull-back in the bivariant theory defines a functorial pull-back $f^*: B^*(Y) \to B^*(X)$. The pull-back map is a homomorphism of graded rings.  When $f$ is a projective morphism, then the projection formula relates the pull-back, push-forward, and cap product as follows:
\[ f_*(f^*(\alpha)\cap \beta) = \alpha\cap f_*(\beta).\]


\subsection{Operational bivariant theories}\label{sec-op-biv}

We now fix an ROBM pre-homology theory $B_*$ on $\cC$ and associate a bivariant theory $B^*$ to it.

Let $f:X \rightarrow Y$ be any morphism. For each morphism $g:Y' \rightarrow Y$, form the fiber square
\[
\xymatrix{
X' \ar[d]^{g'} \ar[r]^{f'} &Y' \ar[d]^g \\
X \ar[r]^f&Y }
\]
with induced morphisms as labeled.  An element $c$ in $B^{p}(X \xrightarrow{f} Y)$, called a \emph{bivariant class},  is a collection of homomorphisms
\[
c_{g}^{(m)}:B_{m}Y'\rightarrow B_{m-p}X'
\]
for all $g:Y' \rightarrow Y$, and all $m \in \mathbf{Z}$, compatible with projective push-forwards, smooth pull-backs, intersection products and exterior products, i.e.:

$\operatorname{(C_{1})}$ If $h:Y'' \rightarrow Y'$ is projective and $g:Y' \rightarrow Y$ is arbitrary, and one forms the fiber diagram
\[
\xymatrix{
X'' \ar[d]^{h'} \ar[r]^{f''} &Y'' \ar[d]^h \\
X' \ar[d]^{g'} \ar[r]^{f'} &Y' \ar[d]^g \\
X \ar[r]^f&Y }
\]
then for all $\alpha \in B_{m}(Y'')$,
\[
c_{g}^{(m)}(h_{*}\alpha)=h'_{*}c_{gh}^{(m)}(\alpha)
\]
in $B_{m-p}(X')$.

$\operatorname{(C_{2})}$ If $h:Y'' \rightarrow Y'$ is smooth of relative dimension $n$ and $g:Y' \rightarrow Y$ is arbitrary, and one forms the fiber diagram
\[
\xymatrix{
X'' \ar[d]^{h'} \ar[r]^{f''} &Y'' \ar[d]^h \\
X' \ar[d]^{g'} \ar[r]^{f'} &Y' \ar[d]^g \\
X \ar[r]^f&Y }
\]
then for all $\alpha \in B_{m}(Y')$,
\[
c_{gh}^{(m+n)}(h^{*}\alpha)=h'^{*}c_{g}^{(m)}(\alpha)
\]
in $B_{m+n-p}(X'')$.

$\operatorname{(C_{3})}$ If $g:Y' \rightarrow Y$ and $h:Y' \rightarrow Z'$ are morphisms, and $i:Z'' \rightarrow Z'$ is an l.c.i. morphism of codimension $e$, and one forms the fiber diagram
\[
\xymatrix{
X'' \ar[d]^{i''} \ar[r]^{f''} &Y'' \ar[d]^{i'} \ar[r]^{h'} &Z''\ar[d]^i\\
X' \ar[d]^{g'} \ar[r]^{f'} &Y' \ar[d]^g \ar[r]^h &Z' \\
X \ar[r]^f&Y  & }
\]
then for all $\alpha \in B_{m}(Y')$,
\[
c_{gi'}^{(m-e)}(i^{!}\alpha)=i^{!}c_{g}^{(m)}(\alpha)
\]
in $B_{m-e-p}(X'')$.

$\operatorname{(C_{4})}$ If $g:Y' \rightarrow Y$ is arbitrary, and $h:Y' \times Z \rightarrow Y'$ and $h':X' \times Z \rightarrow X'$ are the projections, and one forms the fiber diagram 
\[
\xymatrix{
X'\times Z \ar[d]^{h'} \ar[r]^{f''} &Y'\times Z \ar[d]^h \\
X' \ar[d]^{g'} \ar[r]^{f'} &Y' \ar[d]^g \\
X \ar[r]^f&Y }
\]
then for all $\alpha \in B_{m}(Y')$ and $\beta \in B_{l}(Z)$,
\[
c_{gh}^{(m+l)}(\alpha\times\beta) = c_{g}^{(m)}(\alpha) \times \beta
\]
in $B_{m+l-p}(X'\times Z)$.

The three operations are defined as follows.

Product: Let $c\in B^p(X \xrightarrow{f} Y)$ and $d\in B^q(Y \xrightarrow{g} Z)$. Given any morphism $h:Z' \ra Z$, form the fiber diagram
\[
\xymatrix{
X' \ar[d]^{h''} \ar[r]^{f'} &Y' \ar[d]^{h'} \ar[r]^{g'} &Z'\ar[d]^{h}\\
X \ar[r]^f&Y \ar[r]^g &Z,}
\]
and for each integer $m$ define $(c \cdot d)^{(m)}_{h}=c^{(m-q)}_{h'} \circ d^{(m)}_{h}:B_{m}Z'\ra B_{m-p-q}X'$.

Push-forward: Given $c \in B^p(X \xrightarrow{f} Y \xrightarrow{g} Z)$ and any morphism $h:Z' \ra Z$, form the fiber diagram
\[
\xymatrix{
X' \ar[d]^{h''} \ar[r]^{f'} &Y' \ar[d]^{h'} \ar[r]^{g'} &Z'\ar[d]^{h}\\
X \ar[r]^f&Y \ar[r]^g &Z,}
\]
and for each integer $m$ define $(f_{*}c)^{(m)}_{h}= f'_{*} \circ c^{(m)}_{h}:B_{m}Z'\ra B_{m-p}Y'$.

Pull-back: Given $c \in B^p(X \xrightarrow{f} Y)$ and  morphisms $g: Y'\to Y$ and $h:Y'' \ra Y'$, form the fiber diagram
\[
\xymatrix{
X'' \ar[d]^{h'} \ar[r]^{f''} &Y'' \ar[d]^h \\
X' \ar[d]^{g'} \ar[r]^{f'} &Y' \ar[d]^g \\
X \ar[r]^f&Y, }
\]
and for each integer $m$ define $(g^{*}c)^{(m)}_{h}= c^{(m)}_{gh}:B_{m}Y''\ra B_{m-p}X''$.

It is straightforward to verify that these three operations are well defined (i.e., that $c\cdot d$, $f_*c$ and $g^{*}c$ satisfy $\operatorname{(C_{1})}$-$\operatorname{(C_{4})}$, so that they define classes in the appropriate bivariant groups). Unit elements $1_X \in  B^{0}(X\to X)$, for each $X$, satisfying the property $\operatorname{(U)}$ are defined by letting them act by identity homomorphisms. It is also straightforward to check that the three operations satisfy properties $\operatorname{(B_1)}$-$\operatorname{(B_{123})}$. In conclusion, the operational theory is a bivariant theory.

The only bivariant theories we will consider are the operational ones. By a bivariant theory we will mean an operational bivariant theory associated to an ROBM pre-homology theory.


\subsection{Homology and Cohomology for Operational Bivariant Theories.} \label{subsection.homology.cohomology}

Recall that any bivariant theory $B^*(X\to Y)$ contains a covariant homology theory $B^{-*}(X\to \Spec k)$ and a contravariant cohomology theory $B^*(\id_X : X \rightarrow X)$. We claim that if the bivariant theory $B^*(X\to Y)$ is the operational theory associated to an ROBM pre-homology theory $B_*$, then the homology theory $B^{-*}(X\to \Spec k)$ is isomorphic to the original theory $B_*(X)$. Similarly, the cohomology theory $B^*(X\to X)$ agrees with the cohomology theory $B^*(X)$ constructed in \S\,\ref{sec-cohom} for smooth schemes $X$. The proofs in this section are adapted from the proofs in \cite{FultonIT} for the Chow theory.

\begin{proposition} \label{proposition.homology}
For any $X$ and each integer $p$ the homomorphism 
\[
\varphi : B^{-p}(X \ra \operatorname{Spec}k) \ra B_{p}(X)
\]
taking a bivariant class $c$ to $c(1)$ is an isomorphism. Here $1= 1_{\Spec k}\in B_0(\Spec k)$ is the element specified by $\operatorname{(D_{3})}$ in Definition~\ref{def-ext-prod}. The isomorphism $\varphi$ is natural with respect to push-forwards along projective morphisms.
\end{proposition}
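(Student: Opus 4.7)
The plan is to construct an explicit two-sided inverse by exterior product with $1$, using axiom $(C_4)$ as the key structural tool. A bivariant class $c \in B^{-p}(X \to \Spec k)$ consists of homomorphisms $c_{g}^{(m)}: B_m(Y')\to B_{m+p}(X\times Y')$ indexed by morphisms $g:Y'\to \Spec k$ (which, because $\Spec k$ is terminal, just means $Y' \in \cC$), satisfying $(C_1)$--$(C_4)$. I will define a candidate inverse $\psi: B_p(X)\to B^{-p}(X\to \Spec k)$ by sending $\alpha \in B_p(X)$ to the class $\psi(\alpha)$ whose $(m,Y')$-component is the operator $\beta \mapsto \alpha \times \beta$ (using the canonical identification $\Spec k \times Y' = Y'$, so the target lies in $B_{m+p}(X\times Y')$).

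The first main step is to check that $\psi(\alpha)$ actually defines a bivariant class. For $(C_1)$ I would invoke axiom $(A_3)$: $(\id_X \times h)_*(\alpha\times\beta) = \alpha \times h_*\beta$ for projective $h$. For $(C_2)$ I would invoke $(A_4)$ similarly. For $(C_3)$ — compatibility with the refined l.c.i. pull-back $i^!$ — I would apply axiom $(A_{11})$ to the product $\id_X \times i$, using that $\id_X$ is l.c.i. with $\id_X^{!} = \id$, so $(\id_X \times i)^!(\alpha \times \beta) = \alpha \times i^!\beta$. For $(C_4)$ — compatibility with exterior product by a third class $\gamma \in B_l(Z)$ — I would use associativity and commutativity of the exterior product: $\alpha \times (\beta \times \gamma) = (\alpha \times \beta)\times \gamma$. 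The only mildly delicate verification is $(C_3)$, which requires pinning down that our axioms give a refined pull-back compatibility with exterior product along one factor; I expect this to follow directly from $(A_{11})$ once the identifications are made carefully.

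To see that $\varphi \circ \psi = \id_{B_p(X)}$, note that $\psi(\alpha)$ evaluated on $1 \in B_0(\Spec k)$ is $\alpha \times 1 = \alpha$, since $1$ is the unit of exterior product from $(D_3)$. The key step is $\psi \circ \varphi = \id$: given $c \in B^{-p}(X\to\Spec k)$, set $\alpha = c(1)$ and apply $(C_4)$ with source $Y'=\Spec k$, $g = \id_{\Spec k}$, and arbitrary $Z$, whose structure map plays the role of $h$ in $(C_4)$. The axiom yields, for any $\beta \in B_m(Z)$,
\[
c_{h}^{(m)}(1 \times \beta) = c_{\id}^{(0)}(1) \times \beta = \alpha \times \beta,
\]
so $c$ coincides with $\psi(\alpha)$ on every scheme $Z$; this is the heart of the argument and is where $(C_4)$ does all the work. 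Finally, naturality with respect to a projective morphism $f:X\to X''$ is immediate from the definition of push-forward in the operational theory: with $h=\id_{\Spec k}$, the base change $f'$ of $f$ equals $f$ itself, so $(f_*c)(1) = f_*(c(1))$, i.e.\ $\varphi(f_*c) = f_*\varphi(c)$.
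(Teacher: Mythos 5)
Your proof is correct and follows essentially the same route as the paper: the same inverse $\psi(\alpha) = \alpha \times (-)$, the same computation $\varphi\circ\psi = \id$ via $\alpha\times 1 = \alpha$, and the same key use of $(C_4)$ together with the unit property $1\times\beta=\beta$ to get $\psi\circ\varphi=\id$. The only difference is that you spell out the verification of $(C_1)$--$(C_4)$ for $\psi(\alpha)$ (where the paper says it ``follows at once''), and there your $(C_3)$ check needs one additional appeal to $(A_8)$ (Tor-independent base change along the flat projection) to identify $(\id_X\times i)^!$ with the refined pull-back $i^!$ appearing in the axiom.
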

\begin{proof}
Define a homomorphism $\psi : B_{p}(X) \ra B^{-p}(X \ra \operatorname{Spec}k)$ as follows: Given any $a \in B_{p}(X)$, any morphism $f: Y \ra \operatorname{Spec}k$ and a class $\alpha \in B_{m}(Y)$, we define $\psi(a)(\alpha)=a \times \alpha \in B_{m+p}(X \times Y)$. It follows at once that $\psi(a)$ satisfies $\operatorname{(C_{1})}$-$\operatorname{(C_{4})}$ and $\psi$ is a well defined homomorphism. 

For each $a \in B_{p}(X)$, one has $\varphi(\psi(a))=\psi(a)(1)=a\times 1=a \in B_{p}(X)$, so $\varphi \circ \psi$ is the identity. Given any $c \in B^{-p}(X \ra \operatorname{Spec}k)$, any morphism $Y \ra \operatorname{Spec}k$ and any class $\alpha \in B_{m}(Y)$, one has $\psi(\varphi(c))(\alpha)=\psi(c(1))(\alpha)= c(1) \times \alpha = c(1 \times \alpha) = c(\alpha)  \in B^{m+p}(X \times Y)$, so $\psi \circ \varphi$ is also the identity.  

Naturality of $\varphi$ with respect to projective push-forwards follows from the definition of push-forward in the operational bivariant theory.
\end{proof}

Let us now consider the cohomology theory $B^*(X\to X)$. Note that an element $c \in B^{p}(X\to X)$ is a collection of homomorphisms $c_{f}^{(m)}:B_{m}X'\ra B_{m-p}X'$, for all morphisms $f: X'\ra X$ and all integers $m$, that are compatible with projective push-forwards, smooth pull-backs, exterior and intersection products. Using the previous proposition we identify $B_*(X)$ with $B^{-*}(X\to\Spec k)$ and thus give  $B_*(X)$ the structure of a module over $B^*(X\to X)$.

\begin{proposition}[Poincar\'e duality] \label{poincare} Let $X$ be a smooth purely $n$-dimensional scheme and let $B^*(X) = B_{n-*}(X)$ be the cohomology theory defined in \S\,\ref{sec-cohom}. The homomorphism defined by cap product with $1_X\in B^0(X)$:
\[
\varphi: B^{*}(X \xrightarrow{\id_X} X) \xrightarrow{\cap 1_{X}} B^*(X)
\]
is an isomorphism of graded rings that takes $1_X\in B^{0}(X\to X)$ to $1_X\in B^0(X)$. The isomorphism $\varphi$ is natural with respect to pull-backs by l.c.i. morphisms.
\end{proposition}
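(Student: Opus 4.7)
The plan is to construct an explicit inverse $\psi: B^{*}(X) \to B^{*}(X \xrightarrow{\id_X} X)$ to $\varphi$, following the template of Fulton--MacPherson for Chow theory \cite[Ch.~17]{FultonIT} but using only the ROBM axioms. Since $X$ is smooth of dimension $n$, the diagonal $\Delta_{X}: X \to X \times X$ is a regular embedding of codimension $n$, so for any $f: X' \to X$ the graph $\gamma_{f}: X' \to X \times X'$, $x' \mapsto (f(x'), x')$, is the pullback of $\Delta_{X}$ along $\id_X \times f: X \times X' \to X \times X$. This provides a refined Gysin homomorphism $\Delta_{X}^{!}: B_{*}(X \times X') \to B_{*-n}(X')$, and for $\alpha \in B^{p}(X) = B_{n-p}(X)$ I set $\psi(\alpha)_{f}^{(m)}(\beta) = \Delta_{X}^{!}(\alpha \times \beta)$ for $\beta \in B_{m}(X')$.

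To check $\psi(\alpha) \in B^{p}(X \to X)$, I verify $(C_{1})$--$(C_{4})$ by unwinding the relevant fiber diagrams: compatibility with projective push-forwards follows from $(A_{9})$ together with $(A_{3})$; with smooth pull-backs from $(A_{10})$ and $(A_{4})$; with refined l.c.i.\ pull-backs from the commutativity $(A_{6})$ of nested refined Gysin operations; and with exterior products from $(A_{11})$. For $\varphi \circ \psi = \id$, one has $\varphi(\psi(\alpha)) = \Delta_{X}^{!}(\alpha \times 1_{X})$; since $\alpha \times 1_{X} = p_{1}^{*}(\alpha)$ by $(A_{4})$ and the composition $p_{1} \circ \Delta_{X} = \id_{X}$ factorizes the identity through l.c.i.\ morphisms, axioms $(A_{5})$ and $(A_{7})$ yield $\Delta_{X}^{!} \circ p_{1}^{*} = \id$, hence $\varphi(\psi(\alpha)) = \alpha$.

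For $\psi \circ \varphi = \id$, fix $c \in B^{p}(X \to X)$, $f: X' \to X$ and $\beta \in B_{m}(X')$. Applying $(C_{4})$ with the projection $p_{1}: X \times X' \to X$ and the classes $1_{X} \in B_{n}(X)$, $\beta \in B_{m}(X')$ yields $c_{p_{1}}^{(n+m)}(1_{X} \times \beta) = c(1_{X}) \times \beta$. Next, $(C_{3})$ applied to the fiber diagram obtained by pulling $\Delta_{X}$ back along $\id_{X} \times f$ gives $\Delta_{X}^{!} \circ c_{p_{1}}^{(n+m)} = c_{f}^{(m)} \circ \Delta_{X}^{!}$, since $p_{1} \circ \gamma_{f} = f$. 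Combining these with the identity $\Delta_{X}^{!}(1_{X} \times \beta) = \beta$, which follows from $p_{2} \circ \gamma_{f} = \id_{X'}$ exactly as in the previous step, produces $\psi(c(1_{X}))_{f}^{(m)}(\beta) = \Delta_{X}^{!}(c(1_{X}) \times \beta) = c_{f}^{(m)}(\beta)$.

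The multiplicativity of $\varphi$ then follows from the formula $c_{f}^{(m)}(\beta) = \Delta_{X}^{!}(c(1_{X}) \times \beta)$ specialized at $f = \id_{X}$, where the refined Gysin reduces to $\Delta_{X}^{*}$: indeed $\varphi(c \cdot d) = c(d(1_{X})) = \Delta_{X}^{*}(c(1_{X}) \times d(1_{X})) = \varphi(c) \cdot \varphi(d)$, while $\varphi(1_{X}) = 1_{X}$ is immediate from the unit axiom $(U)$. Naturality with respect to an l.c.i.\ morphism $f: X \to Y$ between smooth schemes reduces to the identity $c_{f}^{(n_{X})}(1_{X}) = f^{*}(c_{\id_{Y}}^{(n_{Y})}(1_{Y}))$, which is a direct application of $(C_{3})$ together with $f^{*}(1_{Y}) = 1_{X}$. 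The main obstacle here is organizational: each of $(C_{1})$--$(C_{4})$ for $\psi(\alpha)$ demands assembling the correct fiber diagram and invoking the appropriate refined-Gysin axiom, but no single verification is mathematically deep.
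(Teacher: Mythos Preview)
Your proof is correct and follows essentially the same route as the paper: construct an inverse $\psi$ using the regular embedding of the diagonal, then verify $(C_1)$--$(C_4)$ and the two compositions. The only cosmetic difference is that the paper defines $\psi(a)(\alpha)=\gamma_f^{*}(a\times\alpha)$ using the l.c.i.\ pull-back along the graph $\gamma_f$ directly, whereas you use the refined Gysin map $\Delta_X^{!}$ associated to the base change $\id_X\times f$; these coincide since $\Delta_X$ and $\id_X\times f$ are Tor-independent (axiom $(A_8)$).

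One small imprecision: your justification that $\Delta_X^{!}(1_X\times\beta)=\beta$ ``follows from $p_2\circ\gamma_f=\id_{X'}$ exactly as in the previous step'' is not quite parallel. In the previous step $f=\id_X$, so the refined Gysin is literally $\Delta_X^{*}$ and $(A_5)$ applies immediately; for general $f$ you need either the Tor-independence above to identify $\Delta_X^{!}$ with $\gamma_f^{*}$, or the factorization $p_2^{X\times X}\circ\Delta_X=\id_X$ together with $(A_5)$ and $(A_8)$ to obtain $(\Delta_X)^{!}_{\id_X\times f}\circ p_2^{*}=\id$. Either fix is one line.
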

\begin{proof}
Let us fix an integer $p$ and define a homomorphism $\psi : B^{p}(X)=B_{n-p}(X) \ra B^{p}(X\to X)$ as follows: Given any $a \in B_{n-p}(X)$, any morphism $f: Y \ra X$ and a class $\alpha \in B_{m}(Y)$, we define $\psi(a)(\alpha)=\psi(a)^{(m)}_{f}(\alpha)= \gamma^{*}_{f}(a\times \alpha) \in B_{m-p}Y$, where $\gamma_{f} =(f,\id_Y):Y\ra X \times Y$ is the transpose of the graph of $f$, which in this case is a regular embedding of codimension $n$. It is straightforward to check that $\psi(a)$ is a bivariant class and $\psi$ is a group homomorphism.

For each $a \in B_{n-p}X$, one has $\varphi(\psi(a))= \psi(a)(1_{X})= \gamma_{\id_{X}}^{*}( a\times 1_{X} ) = \id^{*}_X a =a \in B_{n-p}X$, so $\varphi \circ \psi$ is the identity. Given any $c \in B^{p}(X\to X)$, any morphism $f: Y \ra X$ and any class $\alpha \in B_{m}Y$, one has $\psi(\varphi(c))(\alpha)=\psi(c(1_{X}))(\alpha)=\gamma_{f}^{*}(\
 c(1_{X})\times\alpha)=\gamma_{f}^{*}(c(1_{X}\times\alpha))=c(\gamma_{f}^{*}( 1_{X}\times\alpha))=c(\alpha) \in B_{m-p}Y$, so $\psi \circ \varphi$ is also the identity.

To verify the compatibility with multiplication, we show that for arbitrary classes $a \in B_{n-p}(X)$ and $b \in B_{n-q}(X)$ we have $\psi(a \cdot b) = \psi(a) \cdot \psi(b) \in B^{p+q}(X\to X)$. Indeed, given any morphism $\fyx$ and any class $\theta \in B_{m}Y$, we have
\begin{align*}
	\psi(a \cdot b)(\theta) 
			&=\gamma_{f}^{*}(\gamma_{\id_{X}}^{*}(a \times b)\times \theta) 
					=  (f,f,\id_{Y})^{*} (a \times b \times\theta)      \\
			&= \gamma_{f}^{*}((\id_X,\gamma_{f})^{*}(a\times b\times\theta))
					= \gamma_{f}^{*}(a\times \gamma_{f}^{*}(b \times \theta))      \\
			&		=\psi(a)(\psi(b)(\theta)) =(\psi(a) \cdot \psi(b))(\theta) \in B_{m-p-q}Y. 
\end{align*}

The element $1_X\in B^{0}(X\to X)$ acts as multiplicative identity, hence cap product with it maps $1_X \in  B^0(X)$ to itself.

Let $f:X\to Y$ be an l.c.i. morphism between pure dimensional smooth schemes. Naturality of $\varphi$ with respect to pull-back by $f$ is equivalent to the identity
\[ f^*(c)\cap 1_X = f^*(c\cap 1_Y)\]
for any $c\in B^*(Y\to Y)$, which holds by $\operatorname{(C_{3})}$ since $1_X=f^*(1_Y)$. 
\end{proof}

\begin{notation}
We will denote $B^*(X) = B^*(\id_X : X \rightarrow X)$ for an arbitrary scheme $X$ in $\cC$. Proposition~\ref{poincare} shows that this contravariant functor on $\cC$, when restricted to the category of smooth schemes and l.c.i. morphisms, is isomorphic to the functor $B^*$ defined in \S\,\ref{sec-cohom}.   
\end{notation}

\begin{remark}
In his construction of a bivariant theory associated to Chow theory in \cite[Chapter 17]{FultonIT}, Fulton uses the Chow theory versions of our axioms $\operatorname{(C_{1})}$, $\operatorname{(C_{2})}$ and $\operatorname{(C_{3})}$, namely, compatibility of the bivariant classes with proper push-forwards, flat pull-backs and refined Gysin homomorphisms; but there is no explicit requirement of our axiom $\operatorname{(C_{4})}$, compatibility with exterior products. This axiom $\operatorname{(C_{4})}$ does not appear explicitly in Fulton's construction because in the case of Chow theory $A_{*}$ axioms $\operatorname{(C_{1})}$ and $\operatorname{(C_{2})}$ imply axiom $\operatorname{(C_{4})}$. More generally, this is true for any ROBM pre-homology theory $B_{*}$ that satisfies that for every $X$ in $\mathcal{C}$ the group $B_{*}(X)$ is generated by the projective push-forward images of the
classes $1_{Y}$ for all smooth varieties $Y$ with a projective map to $X$. Indeed, using $\operatorname{(C_{1})}$ one reduces $\operatorname{(C_{4})}$ to the case where both $Y'$ and $Z$ are smooth varieties, and $\alpha=1_{Y'}$ and $\beta=1_{Z}$. In that case, from $\operatorname{(C_{2})}$ one obtains that 
$c_{gh}^{(m+l)}(1_{Y'} \times 1_{Z}) 
=c_{gh}^{(m+l)}(1_{Y'\times Z})
=c_{gh}^{(m+l)}(h^{*}1_{Y'})
= h'^{*}c_{g}^{(m)}(1_{Y'})
= c_{g}^{(m)}(1_{Y'}) \times 1_{Z}$, and then $\operatorname{(C_{4})}$ holds in general for $B_{*}$. 
\end{remark}


\section{The Equivariant Version of an ROBM pre-Homology Theory}     \label{section.equivariant.theories}

In this section we fix an ROBM pre-homology theory $B_{*}$ on the category $\Schk$ and construct its \emph{equivariant version} $B_*^G$, which is an ROBM pre-homology theory on $\Schkg$. The construction of $B_*^G$ generalizes to arbitrary ROBM pre-homology theories similar constructions in Chow theory by Totaro \cite{Totaro} and Edidin-Graham \cite{Edidin-Graham}, and in algebraic cobordism by Krishna \cite{Krishna} and Heller-Malag\'on-L\'opez \cite{Heller-Malagon}.

We will need to assume throughout this section that the field $k$ is infinite and the theory $B_*$ satisfies the homotopy property $\operatorname{(H)}$ and the localization property $\operatorname{(L)}$: 

$\operatorname{(H)}$ Let $p: E \ra X$ be a vector bundle of rank $r$ over $X$ in $\Schk$. Then $p^{*}: B_{*}(X) \ra B_{*+r}(E)$ is an isomorphism. 

$\operatorname{(L)}$ For any closed immersion $i: Z \ra X$ with open complement $j: U= X \smallsetminus Z \ra X$ the following sequence is exact:
\[ 
B_{*}(Z) \xrightarrow{i_{*}} B_{*}(X) \xrightarrow{j^{*}} B_{*}(U) \xrightarrow{} 0.
\]



\subsection{Algebraic Groups, Quotients and Good Systems of Representations}
\label{quotients} 

Let $G$ be a linear algebraic group. If $X$ is a scheme with a $G$-action $\sigma: G \times X \ra X$ and the geometric quotient of $X$ by the action of $G$ exists it will be denoted by $X \ra X/G$. When the geometric quotient $\pi: X \ra X/G$ exists, it is called a \emph{principal} $G$\emph{-bundle} if the morphism $\pi$ is faithfully flat and the morphism $\psi=(\sigma,\operatorname{pr_{X}}):G \times X \ra X \times_{X / G} X$ is an isomorphism. By \cite[Proposition 0.9]{GIT}, if $G$ acts freely on $U \in \Schkg$ and the geometric quotient $\pi: U \ra U/G$ exists in $\Schk$, for some quasiprojective scheme $U/G$, then $\pi: U \ra U/G$ is a principal $G$-bundle. Moreover, by \cite[Proposition 7.1]{GIT}, for any $X \in \Schkg$ the geometric quotient $\pi': X \times U \ra (X \times U) / G$ also exists in $\Schk$, it is a principal $G$-bundle and $(X \times U) / G$ is quasiprojective. In this case we denote the scheme $(X \times U) / G$ by $X \times^{G} U$.

\begin{definition}
We say that $\{(V_{i},U_{i})\}_{i \in \mathbf{Z}^{+}}$ is a \emph{good system of representations} of $G$ if each $V_{i}$ is a $G$-representation, $U_{i} \subseteq V_{i}$ is a $G$-invariant open subset and they satisfy the following conditions:
	\begin{enumerate}
		\item $G$ acts freely on $U_{i}$ and $U_{i}/G$ exists as a quasiprojective scheme in $\Schk$.

		\item For each $i$ there is a $G$-representation $W_{i}$ so that $V_{i+1}= V_{i} \oplus W_{i}$.

		\item $U_{i} \oplus \{ 0 \} \subseteq U_{i+1}$ and the inclusion factors as $U_{i} = U_{i} \oplus \{ 0 \} \subseteq U_{i} \oplus W_{i} \subseteq  U_{i+1}$.


		\item $\operatorname{codim}_{V_{i}}(V_{i}\smallsetminus U_{i}) < \operatorname{codim}_{V_{j}}(V_{j}\smallsetminus U_{j})$, for $i<j$.
	\end{enumerate}
\end{definition}

For any algebraic group $G$ there exist good systems of representations (see \cite[Remark 1.4]{Totaro}). The following lemma lists some basic facts regarding the properties of morphisms induced on geometric quotients.

\begin{lemma} \label{lemma.properties.morphisms} 
Let $f:X\ra Y$ be a $G$-equivariant morphism in $\Schkg$ and let $\{(V_{i},U_{i})\}$ be a good system of representations of $G$.
\begin{enumerate}
\item For each $i$ the quotient $X \times^{G} U_{i}=(X \times U_{i})/G$ exists in $\Schk$ and it is quasiprojective. The induced morphisms $\phi_{ij}:X \times^{G} U_{i} \ra X \times^{G} U_{j}$ are l.c.i. morphisms for $j \geq i$. If $X$ is smooth then $X \times^{G} U_{i}$ is also smooth.
\item Let $\mathbf{P}$ be a property of morphisms in the following list: open immersion, closed immersion, regular embedding, projective, smooth, l.c.i. If $f:X\ra Y$ satisfies the property $\mathbf{P}$ in the category $\Schkg$ then the induced maps $f_{i}: X \times^{G} U_{i} \ra Y \times^{G} U_{i}$ satisfy property $\mathbf{P}$ in the category $\Schk$. 
\item For any morphisms $g: Y \ra X$ and $f: Z \ra X$ in $\Schkg$ and any indices $j \geq i$, the square diagrams 
\[
\xymatrix{
W\times U_{i} \ar[d]^{} \ar[r]^{} &Y\times U_{j} \ar[d]^{} \\
Z\times U_{i} \ar[r]^{}&X \times U_{j}, }
\qquad \qquad
\xymatrix{
W\times^{G} U_{i} \ar[d]^{} \ar[r]^{} &Y\times^{G} U_{j} \ar[d]^{g'} \\
Z\times^{G} U_{i} \ar[r]^{f'} &X\times^{G} U_{j}, }
\]
induced by the Cartesian product $W=Y \times_{X} Z$ are fiber squares, and furthermore they are Tor-independent if $f$ and $g$ are Tor-independent. 
\end{enumerate}
\end{lemma}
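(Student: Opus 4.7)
The plan is to reduce each of the three parts to two standard ingredients: the existence of GIT quotients via \cite[Proposition 7.1]{GIT}, which is already recalled in \S\ref{quotients}, and descent along the principal $G$-bundle $X \times U_i \to X \times^G U_i$ of properties and Tor-vanishing that are fpqc-local on the base.

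For part (1), I would invoke \cite[Proposition 7.1]{GIT} directly to obtain the existence of $X \times^G U_i$ as a quasiprojective scheme together with the principal $G$-bundle structure of $X \times U_i \to X \times^G U_i$. To prove that $\phi_{ij}$ is l.c.i. for $j \geq i$, I would induct on $j - i$ to reduce to the case $j = i+1$. The hypothesis $U_i \oplus \{0\} \subset U_i \oplus W_i \subset U_{i+1}$ factors the inclusion $U_i \hookrightarrow U_{i+1}$ as the zero section of the trivial bundle $U_i \oplus W_i \to U_i$ (a regular embedding) followed by an open immersion (since both $U_i \oplus W_i$ and $U_{i+1}$ are open in $V_{i+1}$). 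Taking products with $X$ and descending along the principal $G$-bundles then exhibits $\phi_{i,i+1}$ as a regular embedding followed by an open immersion, hence l.c.i. If $X$ is smooth, then $X \times U_i$ is smooth, and smoothness descends along the faithfully flat quotient map to give smoothness of $X \times^G U_i$.

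For part (2), each listed property (open immersion, closed immersion, regular embedding, projective, smooth, l.c.i.) is fpqc-local on the base and stable under base change. The map $f \times \id_{U_i}: X \times U_i \to Y \times U_i$ inherits the property from $f$ since $U_i$ is smooth, and then descending along the principal $G$-bundle $Y \times U_i \to Y \times^G U_i$ transfers the property to $f_i$. For the non-equivariant square in part (3), a direct computation shows that $(Z \times U_i) \times_{X \times U_j} (Y \times U_j) \cong (Y \times_X Z) \times U_i = W \times U_i$, using that $U_i \hookrightarrow U_j$ is a monomorphism. The equivariant square then follows because principal $G$-bundles preserve fiber products. For Tor-independence, I would use the Künneth-type decomposition over the field $k$
\[
\operatorname{Tor}^{\mathcal{O}_X \otimes \mathcal{O}_{U_j}}_n(\mathcal{O}_Z \otimes \mathcal{O}_{U_i}, \mathcal{O}_Y \otimes \mathcal{O}_{U_j}) = \bigoplus_{p+q=n} \operatorname{Tor}^{\mathcal{O}_X}_p(\mathcal{O}_Z, \mathcal{O}_Y) \otimes \operatorname{Tor}^{\mathcal{O}_{U_j}}_q(\mathcal{O}_{U_i}, \mathcal{O}_{U_j}),
\]
observing that the second factor vanishes for $q > 0$ because $\mathcal{O}_{U_j}$ is free over itself, and the first vanishes for $p > 0$ by Tor-independence of $f$ and $g$. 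Faithfully flat descent then preserves Tor-independence for the equivariant square.

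The main point requiring care is the descent of the various classes of morphisms along the principal $G$-bundles. While smoothness, openness, and closedness descend along fpqc morphisms by standard results, the descent of regular embedding and l.c.i. requires invoking that a closed immersion is regular if and only if its conormal sheaf is locally free of the expected rank and that both conditions are fpqc-local; this reduces part (2) for regular embeddings to a local computation and lets the l.c.i. case follow by composition with a smooth map.
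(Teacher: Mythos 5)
Your overall strategy is sound and in fact does more work than the paper, which disposes of parts (1) and (2) entirely by citation to Edidin--Graham and Heller--Malag\'on-L\'opez. For part (3) your route differs slightly from the paper's: you establish the non-equivariant fiber square by the direct computation $(Z\times U_i)\times_{X\times U_j}(Y\times U_j)\cong (Z\times_X Y)\times(U_i\times_{U_j}U_j)=W\times U_i$ and then descend the Cartesian property along the faithfully flat map $X\times U_j\to X\times^G U_j$, whereas the paper verifies the universal property of the quotient square directly using Mumford's Amplification~7.1 (equivariant maps from $\tilde T=T\times_{X\times^G U_j}(X\times U_j)$ correspond to maps from $T$ to the quotients). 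Both are valid; your descent argument is arguably cleaner but relies on the identifications $Y\times U_j\cong(Y\times^G U_j)\times_{X\times^G U_j}(X\times U_j)$ and likewise for $Z\times U_i$ and $W\times U_i$, which you should state explicitly since they are exactly what makes ``principal $G$-bundles preserve fiber products'' meaningful. Your K\"unneth computation for Tor-independence is the same spectral-sequence argument the paper uses, followed by the same faithfully flat base change for $\operatorname{Tor}$.

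Two of your supporting descent claims are, however, misstated. First, projectivity is \emph{not} fpqc-local on the base in general (the relatively ample line bundle need not descend); the correct route here is to descend properness, which is fpqc-local, and then observe that a proper morphism between quasiprojective $k$-schemes is projective --- quasiprojectivity of $X\times^G U_i$ and $Y\times^G U_i$ being guaranteed by \cite[Proposition 7.1]{GIT}. Second, a closed immersion with conormal sheaf locally free of the expected rank need not be regular (e.g.\ $\Spec k\hookrightarrow\Spec k[x]/(x^2)$ has locally free conormal of rank one but is not regular; Vasconcelos' theorem requires in addition finite projective dimension of the ideal). The fppf-locality of ``regular immersion'' for Noetherian schemes should instead be justified by the fact that exactness of the Koszul complex on a set of local generators of the ideal can be checked after a faithfully flat local base change; alternatively one can simply cite the references, as the paper does. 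With these two justifications repaired, your argument goes through.
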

\begin{proof}
For proofs of the assertions in $(1)$ and $(2)$ see \cite[Proposition 2]{Edidin-Graham} and \cite[2.2.2, Lemma~9]{Heller-Malagon}). For $(3)$, given any $T \in \Schk$ and morphisms $g'': T \ra Y \times^{G} U_{j}$ and $f'':  T \ra Z\times^{G} U_{i}$ such that $f' \circ f'' = g' \circ g''$, we let $G$ act on $\tilde{T}= T \times_{(X \times^{G} U_{j})} (X \times U_{j})$ via the morphism $G \times \tilde{T} \ra \tilde{T}$ induced by the product of the trivial action of $G$ on $T$ and the action of $G$ on $X \times U_{j}$. By \cite[Amplification~7.1]{GIT}, $G$-equivariant morphisms from $\tilde{T}$ to each of $X \times U_{j}$, $Y \times U_{j}$, $Z \times U_{i}$ and $W \times U_{i}$ correspond to the morphisms induced on the quotients from $T$ to $X \times^{G} U_{j}$, $Y \times^{G} U_{j}$, $Z \times^{G} U_{i}$ and $W \times^{G} U_{i}$, respectively. The assertion that the squares in $(3)$ are Cartesian follows easily from these observations. If $f$ and $g$ are Tor-independent, then $\operatorname{Tor}_{m}^{\mathcal{O}_{X}}(\mathcal{O}_{Y},\mathcal{O}_{Z})=0$ for all $m>0$, and clearly $\operatorname{Tor}_{m}^{\mathcal{O}_{U_j}}(\mathcal{O}_{U_j},\mathcal{O}_{U_i})=0$ for all $m>0$. Hence, by applying locally the spectral sequence associated to the double complex obtained as the tensor product of two complexes, we obtain that $\operatorname{Tor}_{m}^{\mathcal{O}_{X \times U_j}}(\mathcal{O}_{Y \times U_j},\mathcal{O}_{Z \times U_i})=0$ for all $m>0$. Since $Y \times U_{j} = (Y \times^{G} U_{j}) \times_{(X \times^{G} U_{j})} (X \times U_{j})$ and $Z \times U_{i} = (Z \times^{G} U_{i}) \times_{(X \times^{G} U_{j})} (X \times U_{j})$, by faithfully flat base change for $\operatorname{Tor}_{m}$, we have that $\operatorname{Tor}_{m}^{\mathcal{O}_{X \times^{G} U_j}}(\mathcal{O}_{Y \times^{G} U_j},\mathcal{O}_{Z \times^{G} U_i})=0$ for all $m>0$. Therefore, in this case the given squares are also Tor-independent.
\end{proof}


\subsection{Construction of $B_{*}^G(X)$}     \label{construction.induced.equivariant.OBM}
Fix a good system of representations $\{ (V_{i},U_{i}) \}$ of $G$. 
For any scheme $X \in \Schkg$ define $B_{*}^{G}(X) =\bigoplus_{n \in \mathbf{Z}} B_{n}^{G}(X)$, where 
\[
B_{n}^{G}(X)=\varprojlim_{i}B_{n+\operatorname{dim}U_{i}-\operatorname{dim}G}(X \times^{G} U_{i}).
\]
To simplify notation, we will often write 
\[ B_{*}^{G}(X)  = \varprojlim_{i}B_*(X \times^{G} U_{i}),\]
where the limit is taken in each degree separately. Equivalently, the limit is taken in the category of graded abelian groups, with $B_*(X \times^{G} U_{i})$ having grading shifted so that the maps in the inverse system are homogeneous of degree zero.

To see that $B^{G}_{*}$ is independent of the choice of a good system of representations, one can formally follow the argument presented in the case of algebraic cobordism in \cite[Proposition 15 and Theorem 16]{Heller-Malagon}, which we outline below for the reader's convenience. The proof of the next proposition requires the field $k$ to be infinite.

\begin{proposition} \label{proposition.homotopy.refined}
Assume that the ROBM pre-homology theory $B_{*}$ satisfies the properties $\operatorname{(H)}$ and $\operatorname{(L)}$. Let $\pi:E\ra X$ be a vector bundle over a scheme $X$ of rank $r$. Let $U \subseteq E$ be an open subscheme with closed complement $S=E\smallsetminus U$.
\begin{enumerate}
\item If $X$ is affine and $\operatorname{codim}_{E}S > \operatorname{dim}X $ then $\pi|_{U}^{*}: B_{m}(X) \ra B_{m+r}(U)$ is an isomorphism for all $m$. 
\item \label{proposition.homotopy.refined.2} For $X$ arbitrary, there is as integer $n(X)$ depending only on $X$, such that $\pi|_{U}^{*}: B_{m}(X) \ra B_{m+r}(U)$ is an isomorphism for all $m$ whenever $\operatorname{codim}_{E}S > n(X) $.
\end{enumerate}
\end{proposition}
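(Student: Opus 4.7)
The plan is to prove part (1) by constructing a section of $\pi$ that avoids $S$, and then to bootstrap to part (2) by Noetherian induction on $\dim X$ using the localization sequence $\operatorname{(L)}$ as the glue.

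For part (1), surjectivity of $\pi|_U^* = j^* \circ \pi^*$ is immediate from $\operatorname{(H)}$ combined with the right-exactness in $\operatorname{(L)}$, so the work is to prove injectivity. First I will produce a section $s \colon X \to E$ of $\pi$ whose image avoids $S$: since $X$ is affine, $E$ is globally generated, giving a surjection of vector bundles $\phi \colon X \times k^N \twoheadrightarrow E$; flatness of $\phi$ preserves codimension, so $S' := \phi^{-1}(S)$ has dimension strictly less than $N$, its projection to $k^N$ is therefore contained in a proper closed subset, and infiniteness of $k$ supplies a $k$-point $v \in k^N$ outside this subset. The composition of the constant section $x \mapsto (x,v)$ with $\phi$ is the desired section $s$, which is a regular embedding of codimension $r$. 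Since $s(X) \cap S = \emptyset$, the fibre product $X \times_E S$ is empty, so $\operatorname{(A_{9})}$ applied to the projective map $i \colon S \to E$ and the l.c.i.\ map $s$, together with $\operatorname{(A_{7})}$ identifying $s^!_{\mathrm{id}_E}$ with $s^*$, yields $s^* \circ i_* = 0$. Because $\pi \circ s = \mathrm{id}_X$ forces $s^* \circ \pi^* = \mathrm{id}$ and $\pi^*$ is an isomorphism by $\operatorname{(H)}$, we get $s^* = (\pi^*)^{-1}$, whence $i_* = 0$, and then $\operatorname{(L)}$ promotes $j^*$ to an isomorphism.

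For part (2), by additivity of $B_*$ I may assume $X$ is irreducible, so dense open affines exist, and I induct on $\dim X$. The base case $\dim X = 0$ follows from part (1). For the inductive step, pick a dense open affine $V \subseteq X$ with closed complement $W$, and set $n(X) := \max\{\dim X,\ n(W) + \dim X - \dim W\}$. When $\operatorname{codim}_E S > n(X)$, part (1) applies to $V$ and, since $\operatorname{codim}_{E|_W}(S \cap E|_W) \geq \operatorname{codim}_E S - (\dim X - \dim W)$, the inductive hypothesis applies to $W$. Writing $U_V = U \cap E|_V$ and $U_W = U \cap E|_W$, the localization sequences for $(V, W)$ in $X$ and for $(U_V, U_W)$ in $U$ fit into a commutative ladder with right-exact rows, the left square commuting by $\operatorname{(A_{2})}$ and the right square by functoriality of smooth pull-back. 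A standard diagram chase from the outer vertical isomorphisms then yields surjectivity of the middle arrow $\pi|_U^*$.

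The step I expect to be the main obstacle is the injectivity in part (2): because $\operatorname{(L)}$ is only right-exact, a naive five-lemma does not close the argument. My plan is to re-run the geometric idea of part (1) at the rational level. Given $\alpha \in B_*(X)$ with $\pi|_U^*(\alpha) = 0$, its restriction to $V$ vanishes by part (1), so $\alpha = i_*(\beta)$ for some $\beta \in B_*(W)$; the relation $\tilde{i}_*(\pi|_{U_W}^*(\beta)) = 0$ obtained from $\operatorname{(A_{2})}$ must then be promoted to $\pi|_{U_W}^*(\beta) = 0$, after which the inductive hypothesis on $W$ forces $\beta = 0$. To carry out this promotion I expect to invoke the refined Gysin vanishing of $\operatorname{(A_{9})}$ using the section over $V$ constructed in part (1), combined with a careful enough choice of the dense open affine $V$ that the potential kernel of $\tilde{i}_*$ does not interfere.
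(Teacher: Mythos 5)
Your part (1) is correct and is essentially the standard argument (the paper itself does not prove this proposition: it cites Proposition 15 of Heller--Malag\'on-L\'opez and asserts that the proof there is formal in $\operatorname{(H)}$ and $\operatorname{(L)}$; your part (1) reproduces that argument). Global generation over the affine base plus a general-position choice of $v\in k^{N}$ --- using that $k$ is infinite, exactly the hypothesis the paper reserves for this proposition --- produces a section $s$ of $\pi$ landing in $U$; then $\operatorname{(A_{9})}$ with empty fibre product gives $s^{*}i_{*}=0$, while $\operatorname{(A_{5})}$, $\operatorname{(A_{7})}$ and $\operatorname{(H)}$ give $s^{*}=(\pi^{*})^{-1}$, so $i_{*}=0$ and $\operatorname{(L)}$ finishes. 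Two small points in part (2): additivity only handles disjoint unions, so you cannot reduce to $X$ irreducible that way (nor would it help, since the closed complement $W$ in the inductive step may be reducible); instead observe directly that any $X$ admits an open affine $V$ with $\dim(X\smallsetminus V)<\dim X$, e.g.\ a disjoint union of affine opens dense in each irreducible component. With that repaired, your codimension bookkeeping for $n(X)$ and your surjectivity chase are fine.

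The genuine gap is the injectivity in part (2), and the repair you sketch cannot close it. Having written $\alpha=i_{*}(\beta)$ with $\beta\in B_{*}(W)$ and deduced $\tilde i_{*}(\pi|_{U_W}^{*}\beta)=0$, you must kill $\pi|_{U_W}^{*}\beta$ itself, i.e.\ you need injectivity of $\tilde i_{*}:B_{*}(U_W)\to B_{*}(U)$ on such classes. Nothing in the axioms controls $\ker\tilde i_{*}$: property $\operatorname{(L)}$ is only right-exact and there is no excision from the left, so no choice of the affine open $V$ constrains this kernel. The specific tool you invoke also gives nothing: the section $s_{V}$ of part (1) has image inside $U_V=U\smallsetminus U_W$, so its fibre product with anything supported on $U_W$ is empty, and $\operatorname{(A_{9})}$ applied to $s_{V}$ annihilates every class in the image of $\tilde i_{*}$ for trivial reasons --- it returns $0$ whether or not $\pi|_{U_W}^{*}\beta$ vanishes, hence detects nothing. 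You have only relocated the failure of the five lemma from $\ker(\pi|_{U}^{*})$ to $\ker(\tilde i_{*})$; in the affine case the difficulty is avoided solely because the global section furnishes an explicit left inverse $s^{*}$ to $\pi|_{U}^{*}$, and the absence of such a section over a general $X$ is precisely what makes part (2) nontrivial. As written, your argument establishes surjectivity but not bijectivity of $\pi|_{U}^{*}$ for non-affine $X$; the injectivity step needs the actual argument of the cited reference rather than the plan given here.
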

\begin{proof}
The case when $B_{*}$ is algebraic cobordism is \cite[Proposition 15]{Heller-Malagon}. The proof given in \cite{Heller-Malagon} only uses the formal properties of algebraic cobordism as an ROBM pre-homology theory satisfying $\operatorname{(H)}$ and $\operatorname{(L)}$, so it translates formally to the present setting.
\end{proof}

\begin{proposition}
For any $X \in \Schkg$,  $B^{G}_{*}(X)$ is independent of the choice of a good system of representations of $G$ up to canonical isomorphism. 
\end{proposition}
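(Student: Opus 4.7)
The plan is to run the standard double-system argument of Totaro, Edidin--Graham, and Heller--Malag\'on-L\'opez \cite{Totaro, Edidin-Graham, Heller-Malagon}, with the key homotopy input now provided abstractly by Proposition~\ref{proposition.homotopy.refined}(\ref{proposition.homotopy.refined.2}) rather than by the special structure of Chow theory or cobordism. Given two good systems $\{(V_i,U_i)\}$ and $\{(V_j',U_j')\}$ of representations of $G$, I first form the mixed family $\{(V_i\oplus V_j',\,U_i\times U_j')\}_{(i,j)}$ with the diagonal $G$-action; each $U_i\times U_j'$ is a $G$-invariant open subset of $V_i\oplus V_j'$ on which $G$ acts freely, and Lemma~\ref{lemma.properties.morphisms}(1) guarantees that the quotient $X\times^G (U_i\times U_j')$ exists in $\Schk$. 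The two coordinate projections give morphisms
\[
X\times^G U_i \;\xleftarrow{\;p_{ij}\;}\; X\times^G (U_i\times U_j') \;\xrightarrow{\;q_{ij}\;}\; X\times^G U_j',
\]
natural in $(i,j)$, producing three inverse systems together with tower morphisms between them.

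Second, I would observe that each $p_{ij}$ factors as an open immersion into the total space of a vector bundle followed by the bundle projection. Indeed, $X\times^G(U_i\times V_j') \to X\times^G U_i$ is the rank $\dim V_j'$ vector bundle associated with the principal $G$-bundle $X\times U_i \to X\times^G U_i$ and the representation $V_j'$, and $X\times^G(U_i\times U_j')$ is the open subscheme whose complement $X\times^G(U_i\times (V_j'\setminus U_j'))$ has codimension $\operatorname{codim}_{V_j'}(V_j'\setminus U_j')$; this codimension tends to infinity with $j$ by property~(4) of a good system. Proposition~\ref{proposition.homotopy.refined}(\ref{proposition.homotopy.refined.2}), applied to the base $X\times^G U_i$, then yields that the smooth pullback
\[
p_{ij}^* \colon B_m(X\times^G U_i) \longrightarrow B_{m+\dim V_j'}(X\times^G(U_i\times U_j'))
\]
is an isomorphism once $j\gg j_0(i,m)$; symmetrically, $q_{ij}^*$ becomes an isomorphism for $i\gg i_0(j,m)$. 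Compatibility of these pullbacks with the transition maps of the three towers (themselves smooth pullbacks induced by properties (2)--(3) of a good system and Lemma~\ref{lemma.properties.morphisms}) follows from axioms $\operatorname{(A_{1})}$ and $\operatorname{(A_{10})}$.

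Third, after absorbing the grading shifts $\operatorname{dim} U_i$, $\operatorname{dim} U_j'$, $\operatorname{dim} G$ into the definition of $B^G_*$ so that all comparison maps are degree preserving, the family of $p_{ij}^*$ assembles into a morphism of inverse systems, and likewise for $q_{ij}^*$. In each fixed total degree cofinally many terms of the mixed tower become isomorphic, via the vector-bundle pullbacks, to the corresponding terms of each single-indexed tower, so one obtains canonical isomorphisms
\[
\varprojlim_i B_*(X\times^G U_i) \;\stackrel{\sim}{\longrightarrow}\; \varprojlim_{(i,j)} B_*(X\times^G(U_i\times U_j')) \;\stackrel{\sim}{\longleftarrow}\; \varprojlim_j B_*(X\times^G U_j'),
\]
whose composite is the desired canonical comparison isomorphism. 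The main obstacle, as always in such arguments, is not existence of the isomorphism but its genuine canonicity: given three good systems one must verify that the three pairwise comparisons form a commutative triangle and that the comparison of a system with itself is the identity. Both statements reduce by the same vector-bundle/codimension argument to contemplating a triple mixed system and invoking functoriality of smooth pullbacks under composition.
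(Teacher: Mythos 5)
Your proposal is correct and follows essentially the same route as the paper: Bogomolov's double filtration argument via the mixed system $X\times^G(U_i\times U_j')$, with Proposition~\ref{proposition.homotopy.refined}(\ref{proposition.homotopy.refined.2}) applied to the vector bundle $X\times^G(U_i\times V_j')\to X\times^G U_i$ and the high-codimension complement to identify each single-indexed tower with the doubly indexed one. Your added remarks on the grading shifts and on verifying canonicity (the commuting triangle for three systems) are sound elaborations of what the paper leaves implicit.
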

\begin{proof}
We use Bogomolov's double filtration argument.
 Let $\{ (V_{i},U_{i}) \}$ and $\{ (V'_{i},U'_{i}) \}$ be good systems of representations of $G$. For a fixed index $i$, since $G$ acts freely on $U_{i}$, it also acts freely on $U_{i} \times V'_{j}$ and $U_{i} \times U'_{j}$ for all $j$. Hence $X \times^{G} (U_{i} \times V'_{j}) \ra X \times^{G} U_{i}$ is a vector bundle. By Proposition~\ref{proposition.homotopy.refined} (\ref{proposition.homotopy.refined.2}), there is an integer $N_{i}$ such that the l.c.i. pull-backs induce canonical isomorphisms 
 \[B_*(X \times^{G} U_{i}) \cong B_*(X \times^{G} (U_{i} \times U'_{j}))
\] 
for each $j \geq N_{i}$. Therefore, there is a canonical isomorphism
 \begin{equation} \label{eqn.limit.1}
\varprojlim_{i}B_*(X \times^{G} U_{i}) =\varprojlim_{i}\varprojlim_{j}B_*(X \times^{G} (U_{i}\times U'_{j})).
 \end{equation}
Similarly, exchanging the role of the good systems of representations we obtain a canonical isomorphism 
  \begin{equation} \label{eqn.limit.2}
\varprojlim_{j}B_*(X \times^{G} U'_{j}) =\varprojlim_{j}\varprojlim_{i}B_*(X \times^{G} (U_{i}\times U'_{j})).
 \end{equation}
 To get the conclusion we only need to observe that the right sides of (\ref{eqn.limit.1}) are (\ref{eqn.limit.2}) canonically isomorphic to the inverse limit of the system $\{ B_*(X \times^{G} (U_{i}\times U'_{j}))  \}_{i,j}$, where the maps 
 $$B_*(X \times^{G} (U_{i}\times U'_{j})) \ra B_*(X \times^{G} (U_{i'}\times U'_{j'}))$$ are the corresponding l.c.i. pull-backs for all $i \geq i'$ and $j\geq j'$.
\end{proof}


\subsection{The Induced ROBM pre-Homology Theory Structure on $B^{G}_{*}$}

We now show that the ROBM pre-homology structure of the theory $B_{*}$ induces an ROBM pre-homology structure on $B^{G}_{*}$. We use functoriality of the inverse limit to construct  projective push forwards, smooth pull-backs, exterior and intersection products on $B^G_{*}$. To define a homomorphism between two inverse limits, we construct a map between the two inverse systems.


Fix a good system of representations $\{ (V_{i},U_{i}) \}$ of $G$. Given any projective $G$-equivariant morphism $\fxy$, the morphisms $\{ f_{i}: X \times^{G} U_{i} \ra Y \times^{G} U_{i} \}$ are projective. The fiber square on the left in (\ref{constructing.push}) is Tor-independent for any $j \geq i$, 
\begin{equation} \label{constructing.push}
\xymatrix{
 X \times^{G} U_{i} \ar[d] \ar[r] &  Y \times^{G} U_{i} \ar[d] \\
  X \times^{G} U_{j}  \ar[r] &  Y \times^{G} U_{j} 
}
\qquad \qquad \quad
\xymatrix{
 B_*(X \times^{G} U_{i})  \ar[r]^{{f_i}_*} &  B_*(Y \times^{G} U_{i})   \\
  B_*(X \times^{G} U_{j})  \ar[u] \ar[r]^{{f_j}_*} &  B_*(Y \times^{G} U_{j}) \ar[u] 
}
\end{equation}
hence the square on the right in (\ref{constructing.push}) is commutative for any $j \geq i$. Hence, the maps ${f_i}_*$ induce a homomorphism between the limits $f_*:  B^G_*(X) \to B^G_*(Y)$. 

Smooth pull-backs are defined in a similar way. For intersection products, given a $G$-equivariant l.c.i morphism $\fzx$ of codimension $d$ and any $G$-equivariant morphism $\gyx$, with $W= Z \times_{X} Y$, first we apply the operation $\times^G U_i$ to the whole intersection product diagram. The result is again an intersection product diagram. The two leftmost fiber squares in (\ref{constructing.lci}) are Tor-independent for any $j \geq i$, 
\begin{equation} \label{constructing.lci}
\xymatrix{
 Z \times^{G} U_{i} \ar[d] \ar[r]^{f_i} &  X \times^{G} U_{i} \ar[d] \\
  Z \times^{G} U_{j}  \ar[r]^{f_j} &  X \times^{G} U_{j} 
}
\quad 
\xymatrix{
 W \times^{G} U_{i} \ar[d] \ar[r]^{} &  Y \times^{G} U_{i} \ar[d] \\
  W \times^{G} U_{j}  \ar[r]^{} &  Y \times^{G} U_{j} 
}
\quad
\xymatrix{
 B_{*-d}(W \times^{G} U_{i})   &  B_*(Y \times^{G} U_{i}) \ar[l]_<<<{f_i^!}   \\
  B_{*-d}(W \times^{G} U_{j})  \ar[u]  &  B_*(Y \times^{G} U_{j}) \ar[u] \ar[l]_<<<{f_j^!} 
}
\end{equation}
hence the square on the right in (\ref{constructing.lci}) is commutative for any $j \geq i$. Hence, the maps $f_i^!$ induce a homomorphism between the limits $f^!:  B^G_*(Y) \to B^G_{*-d}(W)$.

To define the exterior product, note that the morphisms 
\[\phi_{i}: (X \times Y)  \times^{G}  (U_{i} \times U_{i}) \ra (X \times^{G} U_{i})  \times  (Y \times^{G} U_{i}) \] 
are smooth (see \cite[Theorem 6.8]{Borel}). We compose the associated smooth pull-back and the exterior product of $B_{*}$ to get
\[ \times_i: B_*(X \times^{G} U_{i})  \times  B_*(Y \times^{G} U_{i})  \to B_*((X \times^{G} U_{i})  \times  (Y \times^{G} U_{i})) \to  B_*((X \times Y)  \times^{G}  (U_{i} \times U_{i})).\]
The morphisms $\times_i$ are compatible with maps in the inverse systems, hence they define the exterior product map between limits:
 \[ \times : B_{*}^{G}X \times B_{*}^{G}Y \ra  B_{*}^{G}(X \times Y).\]
The elements $1_{U_i/G} \in B_*(\Spec k \times^G U_i)$ define $1\in B_*^G(X)$.

It remains to prove that the theory $B_*^G$ with the operations defined above satisfies the axioms of an ROBM pre-homology theory. Each axiom amounts to a statement about the commutativity of a diagram  of homomorphisms. One can check that in each case the commutativity holds at each level $i$, hence it also holds in the limit.
Using the double filtration argument as before, one can also check that these projective push-forwards, smooth pull-backs, exterior and intersection products are independent of the good system of representations. 

The conclusions of this section can then be summarized as:

\begin{theorem}
The functor $B^G_{*}$ with the projective push-forwards, smooth pull-backs, exterior and intersection products constructed above is a refined oriented Borel-Moore pre-homology theory on the category $\Schkg$. We call $B^G_{*}$ the \emph{equivariant version} of $B_{*}$.
\end{theorem}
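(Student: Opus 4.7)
The plan is to verify each of the ROBM pre-homology axioms $\operatorname{(D_1)}$--$\operatorname{(D_4)}$ and $\operatorname{(A_1)}$--$\operatorname{(A_{11})}$ for $B^G_*$ by checking that the corresponding diagram commutes at each finite level $i$ of the inverse system, and then passing to the limit. The construction in \S\,\ref{construction.induced.equivariant.OBM} already realizes the projective push-forwards, smooth pull-backs, refined intersection products, exterior products, and the unit $1$ as inverse limits of the analogous operations on the spaces $X \times^G U_i$, so no further definitions are needed; what remains is the axiomatic bookkeeping together with a few compatibility checks.

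First I would handle the data axioms. For additivity $\operatorname{(D_1)}$, I use that $\left(\coprod_m X_m\right) \times^G U_i \isom \coprod_m (X_m \times^G U_i)$ in $\Schk$, so the additivity of $B_*$ at each level gives $B_*\!\left(\coprod_m X_m \times^G U_i\right) \isom \bigoplus_m B_*(X_m \times^G U_i)$; since a finite direct sum commutes with inverse limits in $\Ab$, additivity passes to $B^G_*$. The data $\operatorname{(D_2)}$--$\operatorname{(D_4)}$ are by construction, with the grading conventions of \S\,\ref{construction.induced.equivariant.OBM} ensuring that degrees and relative dimensions are tracked correctly (each $B_n(X \times^G U_i)$ is reindexed by $n - \dim U_i + \dim G$, so that smooth pull-backs and refined l.c.i.\ pull-backs induced at each level are degree-homogeneous for the induced maps on $B^G_*$).

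Next I would go through $\operatorname{(A_1)}$--$\operatorname{(A_{11})}$ one by one. Each is the commutativity of a diagram built from a fiber square in $\Schkg$. Applying $-\times^G U_i$ preserves all of the morphism classes appearing in the axioms (projective, smooth, regular embedding, l.c.i.) and sends fiber squares to fiber squares, by Lemma~\ref{lemma.properties.morphisms}(2)--(3); thus the resulting diagram of $B_*$-homomorphisms commutes by the axiom for $B_*$ itself. Functoriality of $\varprojlim_i$ then yields the corresponding commutative diagram for $B^G_*$. The one place that requires an extra input is $\operatorname{(A_8)}$: if $f$ and $g$ are Tor-independent in $\Schkg$, one must know the induced $f_i$ and $g_i$ are Tor-independent in $\Schk$, which is precisely the final assertion of Lemma~\ref{lemma.properties.morphisms}(3).

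The main obstacle is not conceptual but bookkeeping: each axiom must be unwound after applying $-\times^G U_i$, and the grading shifts must be checked so that, for instance, the refined pull-back $f^!$ of \S\,\ref{construction.induced.equivariant.OBM} genuinely lowers degree by $\codim f$ in $B^G_*$ (and not by $\codim f_i$, which differs from $\codim f$ by a $\dim U_i - \dim G$ shift absorbed into the definition of $B^G_*$). Finally, independence of the resulting ROBM pre-homology structure from the choice of good system $\{(V_i,U_i)\}$ follows by the double filtration argument already used for the underlying graded group, applied level-by-level to each operation; this uses Proposition~\ref{proposition.homotopy.refined}(2) exactly as in the proof of the previous proposition, so the structure produced is canonical.
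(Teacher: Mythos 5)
Your proposal is correct and follows essentially the same route as the paper: the paper's own argument is precisely to observe that each axiom is the commutativity of a diagram, to check that commutativity holds at each level $i$ (using the Tor-independence and preservation of morphism classes from Lemma~\ref{lemma.properties.morphisms}), and to pass to the inverse limit, with independence of the good system of representations handled by the same double filtration argument via Proposition~\ref{proposition.homotopy.refined}. Your added remarks on additivity, the grading shifts, and the Tor-independence needed for $\operatorname{(A_{8})}$ are consistent elaborations of details the paper leaves implicit.
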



\subsection{Operational Equivariant Theory}  \label{question.oe.eo}

For a given ROBM pre-homology theory $B_{*}$ on $\Schk$ we constructed an associated equivariant version $B^{G}_{*}$ as an ROBM pre-homology theory on $\Schkg$. The construction of \S\,\ref{section.bivariant.theories} applied to $B_{*}$ and to $B^{G}_{*}$ produces respectively operational bivariant theories  $B^*$ on $\Schk$ and $(B^{G})^{*}$ on $\Schkg$. We denote $(B^{G})^{*}$ by $B^*_{G}$ and call it the \emph{operational equivariant version} of $B_{*}$

One can switch the order of the two steps in the construction of $B_G^*$ and  define an "\emph{equivariant operational}" theory 
 \[ 
\tilde{B}^*_G(X) = \varprojlim_{i} B^*(X \times^{G} U_{i}).
\]
The two theories $B^*_G(X)$ and $\tilde{B}^*_G(X)$ turn out to be isomorphic if we assume that $B_{*}$ satisfies the descent property $\operatorname{(D)}$ described in the next section. This property was first proved by Gillet \cite{GilletSeq} in the case of Chow groups and $K$-theory. It has several other consequences for ROBM pre-homology theories that are studied in the next section.


\section{Descent Sequences}                     \label{section.kimura.bivariant}

We assume in this section that the field $k$ has characteristic zero, or more generally, we assume that every scheme $X$ in $\Schk$ or  in $\Schkg$ has a smooth projective (equivariant) envelope $\pi: \tilde{X}\to X$ as defined in \ref{envelopes}.
We fix an ROBM pre-homology theory $B_{*}$ on one of the categories $\Schk$ or $\Schkg$ and consider the following property $\operatorname{(D)}$:

$\operatorname{(D)}$ For any envelope $\pi:\tilde{X}\ra X$, with $\pi$ projective, the following sequence is exact
\begin{equation} \label{sequence.envelopes}
 B_{*}(\tilde{X} \times_{X} \tilde{X}) \xrightarrow{p_{1*}-p_{2*}}    B_{*}(\tilde{X})    \xrightarrow{\pi_{*}}   B_{*}(X)    \xrightarrow{}  0,
\end{equation}
where $p_{i}:\tilde{X} \times_{X} \tilde{X} \ra \tilde{X}$ is the projection on the $i^{\textnormal{th}}$ factor, for $i=1,2$.


\subsection{Envelopes} \label{envelopes} An \emph{envelope} of a scheme $X$ in $\Schk$ is a proper morphism $\pi: \tilde{X} \ra X$ such that for every subvariety $V$ of $X$ there is a subvariety $\tilde{V}$ of $\tilde{X}$ that is mapped birationally onto $V$ by $\pi$. If $G$ is an algebraic group, a $G$-\emph{equivariant envelope} of a scheme $X$ in $\Schkg$ is a proper $G$-equivariant morphism $\pi: \tilde{X} \ra X$ such that for every $G$-invariant subvariety $V$ of $X$ there is a $G$-invariant subvariety $\tilde{V}$ of $\tilde{X}$ that is mapped birationally onto $V$ by $\pi$. 

In the following, an envelope in the category $\Schk$ means an ordinary envelope and an envelope in the category $\Schkg$ means a $G$-equivariant envelope. If $\pi: \tilde{X} \ra X$ is an envelope, we say that it is a \emph{smooth envelope} if $\tilde{X}$ is smooth, and we say that it is a \emph{projective envelope} if $\pi$ is a projective morphism. Likewise, we say that the envelope $\pi: \tilde{X} \ra X$ is \emph{birational} if for some dense open subset $U$ of $X$, $\pi$ induces an isomorphism $\pi|:\pi^{-1}(U) \ra U$. 
The composition of envelopes is again an envelope and the fiber product of an envelope by any morphism is again an envelope.

The domain $\tilde{X}$ of an envelope is not required to be connected, hence if we assume that varieties over $k$ admit (equivariant) resolutions of singularities via a projective morphism, then by induction on the dimension it follows easily that for every scheme $X$ in $\Schk$ (respectively in $\Schkg$) there exists a smooth projective birational (equivariant) envelope $\pi: \tilde{X} \ra X$.

Notice that if $\pi: \tilde{X} \ra X$ is an envelope in $\Schkg$ and if $\tilde{U} \in \Schkg$ has a free $G$-action such that $\tilde{U}/G$ exists as a quasiprojective scheme in $\Schk$, then the induced morphism $\pi_{G}: \tilde{X} \times^{G} \tilde{U} \ra X \times^{G} \tilde{U}$ is an envelope in $\Schk$. Furthermore if $\pi$ is either a smooth, projective or birational envelope, then $\pi_{G}$ is a smooth, projective or birational envelope, respectively. Moreover, if $\pi|: \pi^{-1}(U) \ra U$ is an isomorphism for some $G$-invariant open subset $U$ of $X$ and $\{ Z_{i} \}$ are $G$-invariant closed subschemes of $X$ such that $X \smallsetminus U = \cup Z_{i}$, then $\pi_{G}$ maps the open subset $\pi_{G}^{-1}( U \times^{G} \tilde{U}) = \pi^{-1}(U) \times^{G} \tilde{U}$ of $\tilde{X} \times^{G} 
\tilde{U}$ isomorphically onto the open subset $ U \times^{G} \tilde{U}$ of $X \times^{G} \tilde{U}$, and the closed subschemes $\{ Z_{i} \times^{G} \tilde{U} \}$ of $X \times^{G} \tilde{U}$ satisfy that $(X \smallsetminus U ) \times^{G} \tilde{U} = (X \times^{G} \tilde{U}) \smallsetminus (U \times^{G} \tilde{U} ) = \cup (Z_{i} \times^{G} \tilde{U})$.


\subsection{Operational Equivariant vs. Equivariant Operational} \label{subsection.OEvsEO}

Assume now that the theory $B_*$ on $\Schk$ satisfies properties $\operatorname{(H)}$ and $\operatorname{(L)}$, and  we can thus define the operational equivariant theory $B^*_G$ as well as the "equivariant operational" theory $\tilde{B}^*_G$ as in \S\,\ref{question.oe.eo}. We show that if $B_*$ also satisfies property $\operatorname{(D)}$, then these two bivariant theories are isomorphic. For simplicity, we prove this isomorphism only for the bivariant cohomology theory $B^*_G(X)$.

\begin{lemma} \label{surjectivity.equivariant.envelopes} Let $B_{*}$ be an ROBM pre-homology theory on $\Schk$ that satisfies $\operatorname{(H)}$, $\operatorname{(L)}$ and $\operatorname{(D)}$. Then, for any scheme $X \in \Schkg$ and any projective envelope $\pi: \tilde{X}\ra X$ the push-forward homomorphism $\pi_{*}: B^{G}_*(\tilde{X}) \ra B^{G}_*(X)$ is surjective.
\end{lemma}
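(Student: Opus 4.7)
The plan is to apply property $\operatorname{(D)}$ level by level along the approximations $U_i$ coming from a good system of representations, thereby producing a short exact sequence of inverse systems, and then to verify that $\varprojlim^{1}$ of the kernel system vanishes via a Mittag-Leffler argument so that right-exactness survives in the limit.

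Fix a good system of representations $\{(V_i, U_i)\}$ of $G$. By the discussion in \S\,\ref{envelopes}, each induced morphism $\pi_i:\tilde{X}\times^{G}U_i \lra X\times^{G}U_i$ is a projective envelope in $\Schk$, so $\operatorname{(D)}$ provides, for every $i$, the exact descent sequence
\[
 B_{*}\bigl((\tilde{X}\times_X\tilde{X})\times^{G}U_i\bigr)\xrightarrow{p_{1*}-p_{2*}} B_{*}(\tilde{X}\times^{G}U_i)\xrightarrow{\pi_{i*}} B_{*}(X\times^{G}U_i)\lra 0.
\]
Writing $K_i:=\ker\pi_{i*}=\im(p_{1*}-p_{2*})$, these assemble into a short exact sequence of inverse systems $0\ra K_i\ra B_{*}(\tilde{X}\times^{G}U_i)\ra B_{*}(X\times^{G}U_i)\ra 0$, whose inverse limits will yield the desired surjectivity as soon as $\varprojlim_i^{1}K_i=0$.

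The key step will be to show that for any $Z\in\Schkg$ the transition maps of the inverse system $\{B_{*}(Z\times^{G}U_i)\}$ are surjective. I would exploit the factorization $U_i=U_i\oplus\{0\}\subset U_i\oplus W_i\subset U_{i+1}$ built into the definition of a good system of representations, which decomposes the l.c.i.\ morphism $Z\times^{G}U_i\hookrightarrow Z\times^{G}U_{i+1}$ as the zero section $s$ of the vector bundle $p:Z\times^{G}(U_i\oplus W_i)\to Z\times^{G}U_i$ followed by an open immersion. Since $p\circ s=\id$, axioms $\operatorname{(A_{5})}$ and $\operatorname{(A_{7})}$ yield $\id=s^{*}\circ p^{*}$ on $B_{*}(Z\times^{G}U_i)$, and since $p^{*}$ is an isomorphism by $\operatorname{(H)}$, so is $s^{*}$; property $\operatorname{(L)}$ then makes the pullback along the open immersion surjective, and composing we obtain surjectivity of the transition map.

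Applying this to $Z=\tilde{X}\times_X\tilde{X}$ and using $\operatorname{(A_{9})}$ to see that $(p_{1*}-p_{2*})$ intertwines the transition maps on the upstairs and downstairs systems, the quotient system $\{K_i\}$ inherits surjective transition maps and is therefore trivially Mittag-Leffler; hence $\varprojlim_i^{1}K_i=0$, and the surjectivity of $\pi_*:B^{G}_{*}(\tilde{X})\to B^{G}_{*}(X)$ on inverse limits follows. The main obstacle in this plan is the verification that $\operatorname{(H)}$ and $\operatorname{(L)}$ conspire to make the transition maps of $\{B_{*}(Z\times^{G}U_i)\}$ surjective; once that is in hand, the remaining inverse-limit bookkeeping is routine.
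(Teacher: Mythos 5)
Your proposal is correct and follows essentially the same route as the paper: apply $\operatorname{(D)}$ to the envelopes $\pi_i:\tilde{X}\times^{G}U_i\to X\times^{G}U_i$ at each level, and use $\operatorname{(H)}$ and $\operatorname{(L)}$ (via the zero-section/open-immersion factorization of $Z\times^{G}U_i\hookrightarrow Z\times^{G}U_{i+1}$) to show the transition maps of $\{B_*(Z\times^{G}U_i)\}$ are surjective, so the image system of $p_{1*}-p_{2*}$ is Mittag-Leffler and surjectivity passes to the inverse limit. The only cosmetic difference is that you phrase the last step through $\varprojlim^{1}$ of the kernel system, whereas the paper states the Mittag-Leffler criterion directly for the image subsystem.
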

\begin{proof}

Recall that the inverse limit $\varprojlim$ is a left exact functor from the category of inverse systems of (graded) abelian groups. Given a short exact sequence of inverse systems
\[ 0\to (E_i) \to (F_i) \to (G_i)\to 0,\]
the sequence of limits 
\[ 0\to \varprojlim E_i \to \varprojlim F_i \to \varprojlim G_i\to 0\]
is exact if the system $(E_i)$ satisfies the Mittag-Leffler condition. This condition is satisfied for example if the maps $E_i\to E_j$ for $i>j$ in the inverse system are all surjective. 

Given an exact sequence
 \[  (E_i) \to (F_i) \to (G_i)\to 0,\]
one may replace the system $(E_i)$ with its image $(I_i)$ in $(F_i)$ to get a short exact sequence. If all maps in the inverse system $(E_i)$ are surjective, then they are also surjective in $(I_i)$, and it follows that the map of limits  $\varprojlim F_i \to \varprojlim G_i$ is surjective.

We apply the previous discussion to the sequence of inverse systems:
 \[
 B_*((\tilde{X}\times_X\tilde{X})\times^{G} U_{i})
 \rightarrow B_*(\tilde{X}\times^{G} U_{i}) \rightarrow B_*(X\times^{G} U_{i}) \rightarrow 0.
 \]
 This sequence is exact by property $\operatorname{(D)}$: the map $\tilde{X}\times^{G} U_{i} \to X\times^{G} U_{i}$ is an envelope and 
 \[ (\tilde{X}\times_X\tilde{X})\times^{G} U_{i} \isom (\tilde{X}\times^G U_i) \times_{X \times^G U_i}  (\tilde{X} \times^{G} U_{i}).\]
 
 For any scheme $Y$ in $\Schkg$, the l.c.i. pull-backs 
 \[ B_*(Y\times^G U_i) \to B_*(Y\times^G U_j)\]
 are surjective for all $i>j$. This follows from properties $\operatorname{(H)}$ and $\operatorname{(L)}$ because the inclusion $Y \times^{G}   U_{j} \ra Y \times^{G}   U_{i}$ can be factored as the inclusion of the zero section of a vector bundle followed by an open immersion. Applying this to the case $Y=\tilde{X}\times_X\tilde{X}$ gives the statement of the lemma.
 \end{proof}

\begin{proposition}         \label{proposition.limit}
If the ROBM pre-homology theory $B_{*}$ on $\Schk$ satisfies properties $\operatorname{(H)}$, $\operatorname{(L)}$ and $\operatorname{(D)}$, then for any $X$ in $\Schkg$ there exists an isomorphism
\[
B^{*}_{G}(X) \isom \varprojlim_{i} B^{*}(X \times^{G} U_{i}).
\]
The isomorphism is natural with respect to pull-back by any morphism $f: X'\to X$ in $\Schkg$.
\end{proposition}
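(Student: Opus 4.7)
The plan is to build natural maps in both directions and show they are mutually inverse. Fix a good system $\{(V_i,U_i)\}$ of representations of $G$.

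\textbf{Forward map $\Phi: B^{*}_{G}(X) \to \varprojlim_{i} B^{*}(X\times^{G} U_{i})$.} Given $c\in B^{*}_{G}(X)$, we produce $c_{i}\in B^{*}(X\times^{G} U_{i})$ for each $i$ as follows. For an arbitrary morphism $g: W \to X\times^{G} U_{i}$ in $\Schk$, form the Cartesian pullback $\tilde{W} = W\times_{X\times^{G} U_{i}}(X\times U_{i})$. Since $X\times U_{i}\to X\times^{G} U_{i}$ is a principal $G$-bundle, so is $\tilde{W}\to W$, and thus $G$ acts freely on $\tilde{W}$ with quotient $W$. Composing with the $G$-equivariant projection $X\times U_{i}\to X$ gives a morphism $\tilde{g}:\tilde{W}\to X$ in $\Schkg$. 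The map $\tilde W\times^{G}U_{j}\to W$ is an open subscheme of a vector bundle (of rank $\dim V_{j}$), so by properties $\operatorname{(H)}$ and $\operatorname{(L)}$ together with Proposition~\ref{proposition.homotopy.refined}(\ref{proposition.homotopy.refined.2}), the maps $B_{*}(W)\to B_{*}(\tilde W\times^{G}U_{j})$ are isomorphisms for $j\gg 0$, which shows $B^{G}_{*}(\tilde W)\cong B_{*}(W)$ canonically. Define $(c_{i})_{g}$ to be the composition $B_{*}(W)\cong B^{G}_{*}(\tilde W)\xrightarrow{c_{\tilde g}} B^{G}_{*}(\tilde W)\cong B_{*}(W)$. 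The axioms $\operatorname{(C_1)}$--$\operatorname{(C_4)}$ for $c_i$ reduce to the corresponding axioms for $c$ via the naturality of these identifications, using that the operations on $B^{G}_{*}$ were constructed levelwise. Compatibility as $i$ varies, i.e.\ that $(c_i)$ lies in $\varprojlim_i B^*(X\times^G U_i)$, follows from axiom $\operatorname{(C_3)}$ applied to the l.c.i.\ maps $X\times^{G}U_{i}\to X\times^{G}U_{j}$ coming from Lemma~\ref{lemma.properties.morphisms}.

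\textbf{Inverse map $\Psi:\varprojlim_{i}B^{*}(X\times^{G}U_{i})\to B^{*}_{G}(X)$.} Given $(c_{i})$, we define $c\in B^{*}_{G}(X)$ by specifying its action on each morphism $f:Y\to X$ in $\Schkg$. For every $i$, the morphism $f_{i}:Y\times^{G}U_{i}\to X\times^{G}U_{i}$ in $\Schk$ yields an operator $(c_{i})_{f_{i}}:B_{*}(Y\times^{G}U_{i})\to B_{*-p}(Y\times^{G}U_{i})$. The compatibility of the family $(c_i)$ in the inverse system together with axiom $\operatorname{(C_3)}$ for each $c_{i}$ guarantees these operators commute with the transition maps, hence pass to a well-defined operator on $B^{G}_{*}(Y)=\varprojlim_{i}B_{*}(Y\times^{G}U_{i})$. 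The axioms $\operatorname{(C_1)}$--$\operatorname{(C_4)}$ for the resulting $c$ follow by taking the limit of the same axioms for each $c_{i}$, using that projective push-forwards, smooth pull-backs, refined l.c.i.\ pull-backs and exterior products in $B^{G}_{*}$ were defined levelwise in \S\ref{question.oe.eo}.

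\textbf{The two maps are inverses.} For $\Psi\circ\Phi=\id$, given $c$ and $f:Y\to X$ in $\Schkg$, the operator $(\Psi\Phi(c))_{f}$ on $B^{G}_{*}(Y)$ is assembled from the operators $(c_i)_{f_i}$. Tracing through the definition of $c_{i}$ applied to the particular morphism $g=f_i:Y\times^{G}U_{i}\to X\times^{G}U_{i}$, the pullback $\tilde{W}$ is simply $Y\times U_{i}$, and the composed map $\tilde{g}$ is the composition $Y\times U_i\to X\times U_i\to X$. Under the canonical identification $B^{G}_{*}(Y\times U_{i})\cong B_{*}(Y\times^{G}U_{i})$, the operator $c_{\tilde g}$ at level $j$ is precisely the operator defining $c_{f}$ at level (essentially) $i$, so the two agree. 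For $\Phi\circ\Psi=\id$, an operational class on a scheme in $\Schk$ is determined by its action on $B_{*}(W)$ for all $W\to X\times^{G}U_{i}$; our construction applies $\Psi(c_{\cdot})$ to the morphism $\tilde W\to X$ in $\Schkg$, and by construction this recovers the action of $c_{i}$ on $B_{*}(W)$.

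\textbf{Naturality} with respect to a morphism $f:X'\to X$ in $\Schkg$ is immediate, since pull-back by $f$ in $B^{*}_{G}$ is defined by precomposing the test morphisms, and pull-back by the induced $f_{i}:X'\times^{G}U_{i}\to X\times^{G}U_{i}$ in the inverse system is defined similarly; the correspondence $W\leftrightarrow\tilde W$ intertwines the two pull-backs.

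\textbf{Main obstacle.} The main technical difficulty lies in verifying that each $c_{i}$ really satisfies $\operatorname{(C_1)}$--$\operatorname{(C_4)}$, because the construction passes through the pullback $\tilde W\to W$ and the identification $B_{*}(W)\cong B^{G}_{*}(\tilde W)$, which depends on an implicit choice of approximation index $j$. This is exactly where properties $\operatorname{(H)}$ and $\operatorname{(L)}$ are indispensable, and the descent property $\operatorname{(D)}$ enters via Lemma~\ref{surjectivity.equivariant.envelopes} to ensure that the equivariant inverse limits are computed correctly (e.g.\ to conclude injectivity of $\Phi$ by reduction through equivariant envelopes, and to check that the operators assembled levelwise do pass to the limit without loss of information).
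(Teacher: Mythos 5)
Your map $\Psi$ from the inverse limit to $B^*_G(X)$ is exactly the map $\varphi_X$ the paper constructs, and that part is fine. The genuine divergence — and the genuine gap — is in your forward map $\Phi$. To define $c_i\in B^*(X\times^G U_i)$ you must produce an operator $(c_i)_g$ for \emph{every} test morphism $g:W\to X\times^G U_i$ in $\Schk$, and $\Schk$ consists of arbitrary separated finite type schemes. Your construction passes to the principal $G$-bundle $\tilde W=W\times_{X\times^G U_i}(X\times U_i)$ and applies $c$ to $\tilde g:\tilde W\to X$; but $c\in B^*_G(X)$ only acts on morphisms in $\Schkg$, whose objects are required to admit an ample $G$-linearizable line bundle (in particular to be quasiprojective). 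For $W$ not quasiprojective, $\tilde W$ is not an object of $\Schkg$ and $c_{\tilde g}$ is simply undefined, so $\Phi$ does not exist as described. This is not a cosmetic point: it is precisely the place where the descent property $\operatorname{(D)}$ must enter, and your closing paragraph gestures at $\operatorname{(D)}$ only vaguely ("to conclude injectivity of $\Phi$"), whereas if $\Phi$ and $\Psi$ were honestly mutually inverse as constructed, $\operatorname{(D)}$ would be superfluous — which contradicts the structure of the result. A secondary gap: in checking $\Phi\circ\Psi=\id$ you identify the operator $(c_j)_{\tilde g_j}$ on $B_*(\tilde W\times^G U_j)$ with $(c_i)_g$ on $B_*(W)$, but the morphism $\tilde g\times^G\id_{U_j}:\tilde W\times^G U_j\to X\times^G U_j$ and the composite $\tilde W\times^G U_j\to W\to X\times^G U_i\to X\times^G U_j$ are genuinely different maps (they differ in the $U$-coordinate), so equating the two operators requires an argument (a homotopy-property comparison) that you do not supply.

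For contrast, the paper constructs only $\varphi_X=\Psi$ and proves bijectivity without ever building a pointwise inverse on operators: for smooth $X$ it applies Poincar\'e duality (Proposition~\ref{poincare}) levelwise and passes to the limit; for general $X$ it chooses a smooth projective equivariant envelope $\pi:\tilde X\to X$, deduces injectivity of $\pi^*$ and $(\pi_i^*)$ from the surjectivity of envelope push-forwards (Lemma~\ref{surjectivity.equivariant.envelopes}, which is where $\operatorname{(H)}$, $\operatorname{(L)}$, $\operatorname{(D)}$ are used), and then manufactures the preimage $(c_i)$ of a class $c$ by pushing forward along the envelopes $\pi_i'$, with well-definedness guaranteed by $\operatorname{(D)}$. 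If you want to salvage your approach, you would need to extend the action of $c$ from $\Schkg$ to non-quasiprojective test schemes, which essentially forces you back into the envelope/descent argument; as written, the proposal has a real hole.
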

\begin{proof}
Given $X$ in $\Schkg$ we define a homomorphism 
\[
\varphi_{X}: \varprojlim_{i} B^{*}(X \times^{G} U_{i}) \ra B^{*}_{G}(X)
\]
as follows: Given $c=(c_{i}) \in \varprojlim_{i} B^{*}(X \times^{G} U_{i})$, where $c_{i} \in B^{*}(X \times^{G} U_{i})$ for each $i$, and given any $G$-equivariant morphism $f: Y \ra X$ and a class $\alpha=(\alpha_{i}) \in B_{*}^{G}(Y)=\varprojlim_{i}B_{*}(Y \times^{G} U_{i})$, where $\alpha_{i} \in B_{*}(Y \times^{G} U_{i})$ for each $i$, one defines $\varphi_{X}(c)(\alpha)=(c_{i}(\alpha_{i})) \in \varprojlim_{i}B_{*}(Y \times^{G} U_{i}) = B_{*}^{G}(Y)$. It is straightforward to verify that $\varphi_{X}(c)$ is well defined and is indeed a bivariant class in $B^{G}_{*}(X)$, so that $\varphi_{X}$ is a well defined homomorphism. 

It is clear from the definitions that $\varphi_{X}$ is natural with respect to pull-backs. 

We prove that $\varphi_{X}$ is an isomorphism. First, we consider the case when $X$ is smooth. In this case, each $X \times^{G} U_{i}$ is smooth as well. By Poincar\'e duality (Proposition~\ref{poincare}) we have isomorphisms $B^{*}_{G}(X) \xrightarrow{\cap 1_{X}} B_{*}^{G}(X)$ and $B^{*}(X \times^{G} U_{i}) \xrightarrow{\cap 1_{X \times^{G} U_{i}}} B_{*}(X \times^{G} U_{i})$ for each $i$. Passing to the component-wise inverse limit, and composing appropriately, one obtains the isomorphism
\[\xymatrixcolsep{5pc}
\xymatrix{
{\varprojlim_{i} B^{*}(X \times^{G} U_{i})}      \ar[r]^-{(\cap 1_{X \times^{G} U_{i}})_{i}}   \ar@/_2pc/[rr]^{\varphi_{X}}              &
{\varprojlim_{i} B_{*}(X \times^{G} U_{i})}      =       {B_{*}^{G}(X)}                                      \ar[r]^-{(\cap 1_{X})^{-1}}    &
{B^{*}_{G}(X)}
}
\]
which can be verified directly to be $\varphi_{X}$.

For the general case, given $X$ we choose a $G$-equivariant envelope $\pi:\tilde{X}\ra X$ so that $\pi$ is projective and $\tilde{X}$ is smooth. Let $\pi_{i}:\tilde{X} \times^{G} U_{i} \ra X \times^{G} U_{i}$ be the induced morphisms. We get a commutative diagram:
\[\xymatrixcolsep{4pc}
\xymatrix{
{\varprojlim_{i} B^{*}(X \times^{G} U_{i})} \ar[d]^{\varphi_{X}} \ar@{^{(}->}[r]^{(\pi_{i}^{*})} &{\varprojlim_{i} B^{*}(\tilde{X} \times^{G} U_{i})} \ar[d]_{\wr}^{\varphi_{\tilde{X}}} \\
{B^{*}_{G}(X)} \ar@{^{(}->}[r]^{\pi^{*}} &  {B^{*}_{G}(\tilde{X})} }
\]
We claim that $\pi^{*}$ and $(\pi_{i}^{*})$ are injective. To see this, assume that $\pi^*c=0$ for some $c\in B^{*}_{G}(X)$. Given a $G$-equivariant map $f:Y\ra X$ and a class $\alpha \in B_{*}^{G}(Y)$, form the fiber product
\[
\xymatrix{
\tilde{Y} \ar[d]^{f'} \ar[r]^{\pi'} &Y \ar[d]^f \\
\tilde{X} \ar[r]^{\pi}&X }
\]
with morphisms as indicated. Since $\pi'$ is an envelope, by Lemma \ref{surjectivity.equivariant.envelopes} there exists $\tilde{\alpha} \in B_{*}^{G}(\tilde{Y})$ such that $\pi'_{*}(\tilde{\alpha})=\alpha$. We have $c(\alpha)=c(\pi'_{*}(\tilde{\alpha}))=\pi'_{*}(c(\tilde{\alpha}))=\pi'_{*}((\pi^{*}c)(\tilde{\alpha}))=0$, and it follows that $\pi^{*}$ is injective. Since each $\pi_{i}$ is also an envelope, the same argument proves that the $\pi_{i}^{*}$ are injective, and then so is the component-wise inverse limit homomorphism $(\pi_{i}^{*})$. In particular, it follows that $\varphi_{X}$ is injective, as $\varphi_{\tilde{X}}$ is an isomorphism by the smooth case. 

To prove the surjectivity of $\varphi_{X}$, we consider $c\in {B^{*}_{G}(X)}$, and construct an element mapping to $\varphi^{-1}_{\tilde{X}} (\pi^{*}c)$ by $(\pi_{i}^{*})$. Let $p_{j}:\tilde{X}\times_{X}\tilde{X}\ra \tilde{X}$ and $p'_{j}: (\tilde{X} \times^{G} U_{i})   \times_{X\times^{G} U_{i}} (\tilde{X}\times^{G} U_{i}) \ra \tilde{X} \times^{G} U_{i}$ be the projections on the corresponding $j^{\textnormal{th}}$ factor, for $j=1,2$. Let $(\tilde{c}_{i}) \in \varprojlim_{i} B^{*}(\tilde{X} \times^{G} U_{i})$ be the image of $c$ under $\varphi^{-1}_{\tilde{X}} \circ \pi^*$, where each $\tilde{c}_{i} \in B^{*}(\tilde{X} \times^{G} U_{i})$. Now, using that $\varphi_{\tilde{X} \times_{X}  \tilde{X}}$ is injective and that $(p_{1}^{*}-p_{2}^{*})(\pi^{*}c)=0$, it follows that $({p'_{1}}^{*}-{p'_{2}}^{*})(\tilde{c}_{i})=0$ for each $i$. We define a class $(c_{i}) \in \varprojlim_{i} B^{*}(X \times^{G} U_{i})$ as follows: Given a morphism $f:Y \ra X \times^{G} U_{i}$ and a class $\alpha \in B_{*}(Y)$, form the fiber product
\[
\xymatrix{
\tilde{Y} \ar[d]^{f'} \ar[r]^{\pi'_{i}} &Y \ar[d]^f \\
\tilde{X}\times^{G} U_{i} \ar[r]^{\pi_{i}}&X\times^{G} U_{i} }
\]
with morphisms as indicated, and define $c_{i}(\alpha)=\pi'_{i \, *}(\tilde{c}_{i}(\tilde{\alpha}))$, where $\tilde{\alpha}\in B_{*}(\tilde{Y})$ is any class satisfying $\pi'_{i \, *}(\tilde{\alpha})=\alpha$ (which exists since $\pi'_{i}$ is an envelope). To see that $c_{i}(\alpha)$ is independent of the choice of $\tilde{\alpha}$, it is enough to see that $\pi'_{i \, *}(\tilde{c}_{i}(\beta))=0$ for each class $\beta \in B_{*}(\tilde{Y})$ such that $\pi'_{i \, *}\beta=0$. By property $\operatorname{(D)}$, given such class $\beta$ there exists a class $\gamma \in B_{*}(\tilde{Y} \times_{Y} \tilde{Y})$ such that $\beta=g_{1*}(\gamma)-g_{2*}(\gamma)$, where $g_{j}:\tilde{Y} \times_{Y} \tilde{Y} \ra \tilde{Y}$ are the projections, for $j=1,2$. Since $\pi'_{i}\circ g_{1}=\pi'_{i}\circ g_{2}$, it follows that $\pi'_{i \, *}(\tilde{c}_{i}(\beta))=\pi'_{i \, *}(\tilde{c}_{i}(g_{1*}(\gamma)-g_{2*}(\gamma)))=((\pi'_{i}\circ g_{1})_{*}-(\pi'_{i}\circ g_{2})_{*})(\tilde{c}_{i}(\gamma))=0$, and then $c_{i}$ is well defined. It is straightforward to verify 
that each $c_{i}$ satisfies the conditions $\operatorname{(C_{1})}$-$\operatorname{(C_{4})}$, so they define bivariant classes $c_{i}\in B^{*}(X\times^{G} U_{i})$. Moreover, it is clear that the classes $c_{i}$ agree under the bivariant pull-backs $B^{*}(X\times^{G} U_{j}) \ra B^{*}(X\times^{G} U_{i})$ for each $j \geq i$, so they define a class $(c_{i}) \in \varprojlim_{i} B^{*}(X \times^{G} U_{i})$. To prove that $\varphi_{X}$ is surjective, it is enough to see that $\pi_{i}^{*}c_{i}=\tilde{c}_{i}$ for each $i$. For this, let $f:Y \ra \tilde{X}\times^{G} U_{i}$ be any morphism and form the fiber diagram 
\[\xymatrixcolsep{5pc}
\xymatrix{
\tilde{Y} \ar[d]^{f'} \ar[r]^{\pi'_{i}} &Y \ar[d]^f \\
(\tilde{X} \times^{G} U_{i})   \times_{X\times^{G} U_{i}} (\tilde{X}\times^{G} U_{i}) \ar[d]^{p'_1} \ar[r]^-{p'_2} & \tilde{X}\times^{G} U_{i} \ar[d]^{\pi_{i}} \\
\tilde{X}\times^{G} U_{i} \ar[r]^{\pi_{i}}& X\times^{G} U_{i} }
\]
with morphisms as indicated. Consider a class $\alpha \in B_{*}(Y)$ and any class $\tilde{\alpha} \in B_{*}(\tilde{Y})$ such that $\pi'_{i \, *}(\tilde{\alpha})=\alpha$. Since ${p}_1'^{*}(\tilde{c}_{i})={p}_2'^{*}(\tilde{c}_{i})$, we have that
\begin{align*}
	(\pi_{i}^{*}c_{i})_{f}(\alpha)&=\pi'_{i \, *}(       (\tilde{c}_{i})_{p'_{1}\circ f'} (\tilde{\alpha})  )
                              = \pi'_{i \, *}(  ( f'^{*} {p}_1'^{*} \tilde{c}_{i}) (\tilde{\alpha})  )       \\
                              &=\pi'_{i \, *}(  ( f'^{*} {p}_2'^{*} \tilde{c}_{i}) (\tilde{\alpha})  )
                              =\pi'_{i \, *}(       (\tilde{c}_{i})_{f \circ \pi'_{i} } (\tilde{\alpha})  )
                              =(\tilde{c}_{i})_{f}(\alpha). 
\end{align*}
Hence $\pi_{i}^{*}c_{i}=\tilde{c}_{i}$ for each $i$, and the proof is complete.
\end{proof}


\subsection{Kimura Type Descent Sequence for Bivariant Theories and Inductive Computation of Bivariant Groups}
 
Theorem~\ref{thm-seq2-biv} and Theorem~\ref{thm-biv-image} below were proved by Kimura \cite{Kimura} in the case of Chow theory $A_*$. We generalize his proofs to arbitrary ROBM pre-homology theories.

\begin{theorem}  \label{thm-seq2-biv} Let $B_{*}$ be an ROBM pre-homology theory on $\mathcal{C}=\Schk$ or $\mathcal{C}=\Schkg$ that satisfies property $\operatorname{(D)}$. 
Let $\pi: \tilde{X}\to X$ be a projective envelope in $\mathcal{C}$, $Y \to X$ be a morphism in $\mathcal{C}$, and $\tilde{Y} = \tilde{X}\times_X Y$. Then the following sequence is exact
\begin{equation} \label{sequence.kimura.bivariant} 0 \xrightarrow{} B^{*}(Y\to X) \xrightarrow{\pi^*}  B^{*}(\tilde{Y}\to \tilde{X}) \xrightarrow{p_1^*-p_2^*} B^{*}( \tilde{Y}\times_Y\tilde{Y} \to \tilde{X}\times_X\tilde{X}). \end{equation}  
\end{theorem}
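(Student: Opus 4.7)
The plan is to establish exactness at each of the two spots by exploiting property $\operatorname{(D)}$ applied to suitable base changes of the envelope $\pi$. The starting observation is that for any morphism $g: X'\to X$, the base change $\pi': \tilde{X}' := \tilde{X}\times_X X'\to X'$ is again a projective envelope, hence by (the proof of) Lemma~\ref{surjectivity.equivariant.envelopes} the pushforward $\pi'_*$ is surjective and the descent sequence for $\pi'$ is exact. I will write $g': \tilde{X}'\to\tilde{X}$ for the induced map and $\pi_Y': \tilde{Y}'\to Y'$ for the corresponding base change of $\tilde{Y}\to Y$ along $Y':=Y\times_X X'\to Y$, noting $g\circ\pi' = \pi\circ g'$.

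For injectivity of $\pi^*$, suppose $\pi^*c = 0$ for some $c\in B^*(Y\to X)$. Given $g$ and $\alpha\in B_*(X')$, lift $\alpha$ to $\tilde\alpha\in B_*(\tilde{X}')$ along $\pi'_*$. By axiom $\operatorname{(C_{1})}$ applied to the projective morphism $\pi'$ (and its pullback $\pi_Y'$ on the $Y$-side), together with the identity $c_{g\circ\pi'} = c_{\pi\circ g'} = (\pi^*c)_{g'}$, one obtains
\[
c_g(\alpha) = c_g(\pi'_*\tilde\alpha) = (\pi_Y')_*\, c_{g\pi'}(\tilde\alpha) = (\pi_Y')_*(\pi^*c)_{g'}(\tilde\alpha) = 0.
\]

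For exactness at the middle term, given $\tilde c\in B^*(\tilde{Y}\to\tilde{X})$ with $p_1^*\tilde c = p_2^*\tilde c$, I will construct a bivariant class $c\in B^*(Y\to X)$ by the formula
\[
c_g(\alpha) := (\pi_Y')_*\bigl(\tilde c_{g'}(\tilde\alpha)\bigr),
\]
where $\tilde\alpha$ is any lift of $\alpha$ along $\pi'_*$. The key step, and the main obstacle, is well-definedness of $c_g(\alpha)$. If $\pi'_*\tilde\alpha = 0$, property $\operatorname{(D)}$ applied to the envelope $\pi'$ yields $\gamma\in B_*(\tilde{X}'\times_{X'}\tilde{X}')$ with $\tilde\alpha = q_{1*}\gamma - q_{2*}\gamma$, where $q_1,q_2$ are the two projections. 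The compositions $g'\circ q_i$ factor as $p_i\circ h$, where $h:\tilde{X}'\times_{X'}\tilde{X}'\to\tilde{X}\times_X\tilde{X}$ is the canonical map, so the hypothesis $p_1^*\tilde c = p_2^*\tilde c$ forces $\tilde c_{g'q_1}(\gamma) = \tilde c_{g'q_2}(\gamma)$. Expanding via $\operatorname{(C_{1})}$,
\[
\tilde c_{g'}(\tilde\alpha) = q'_{1*}\tilde c_{g'q_1}(\gamma) - q'_{2*}\tilde c_{g'q_2}(\gamma),
\]
and since $\pi_Y'\circ q'_1 = \pi_Y'\circ q'_2$ on the $Y$-side, applying $(\pi_Y')_*$ collapses the two terms and gives $0$.

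The remaining verifications are routine. Axioms $\operatorname{(C_{1})}$--$\operatorname{(C_{4})}$ for $c$ reduce to the corresponding axioms for $\tilde c$, using that base changes of envelopes are envelopes and the standard compatibilities of pushforward, smooth pullback, refined Gysin maps and exterior products with base change. To verify $\pi^*c = \tilde c$, take $g = \pi\circ h$ for $h:X''\to\tilde{X}$: then $\pi': \tilde{X}\times_X X''\to X''$ admits the diagonal section $s = (h,\operatorname{id}_{X''})$, so $\tilde\alpha = s_*\beta$ is a canonical lift of any $\beta\in B_*(X'')$. A short computation using $\operatorname{(C_{1})}$ and the identity $\pi_Y'\circ s' = \operatorname{id}$ for the induced section $s'$ on the $Y$-side gives $c_{\pi h}(\beta) = \tilde c_h(\beta)$, i.e.\ $\pi^*c = \tilde c$, completing the argument.
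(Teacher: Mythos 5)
Your proposal is correct and follows essentially the same route as the paper: lift classes along the base-changed envelope (whose push-forward is surjective by property $\operatorname{(D)}$), define $c_g(\alpha)$ by pushing forward $\tilde c$ applied to a lift, and use the descent sequence for the base-changed envelope together with $p_1^*\tilde c = p_2^*\tilde c$ to obtain well-definedness --- you are in fact more explicit than the paper about where that hypothesis enters, via the factorization $g'\circ q_i = p_i\circ h$. The only differences are cosmetic: the paper verifies $\pi^*c=\tilde c$ by pulling back the identity $p_1^*\tilde c = p_2^*\tilde c$ along $\tilde{X}'\to\tilde{X}\times_X\tilde{X}$ rather than via your diagonal section (both work), and you should add the one-line observation that $(p_1^*-p_2^*)\circ\pi^*=0$ because $\pi\circ p_1=\pi\circ p_2$, which is the other half of exactness at the middle term.
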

\begin{proof}
Assume that $\pi^*c=0$ for some $c\in B^{*}(Y \ra X)$. Given a morphism $f: X' \ra X$ and a class $\alpha \in B_{*}(X')$, form the fiber diagram on the left in (\ref{fiber.diagram.cube})
\begin{equation} \label{fiber.diagram.cube}
\begin{minipage}[b]{0.5\linewidth}
\SelectTips{eu}{12}
\[
\xymatrix{ 
& \tilde{Y'} \ar'[d][dd] \ar[ld]_{\pi''} \ar[rr]  &&\tilde{X'} \ar[dd] \ar[ld]^{\pi'} \\
Y'  \ar[dd]  \ar[rr]  &&  X'  \ar[dd]  \\
& \tilde{Y}   \ar'[r][rr]  \ar[ld]   &&  \tilde{X}  \ar[ld]^{\pi}   \\
Y \ar[rr]  && X   }
\]
\end{minipage}
\begin{minipage}[b]{0.5\linewidth}
%
\SelectTips{eu}{12}
\[\xymatrixcolsep{6mm}\xymatrixrowsep{6mm}
\xymatrix{ 
& \tilde{Y'} \ar'[d][dd] \ar[ld]_{\pi''} \ar[rr]  &&\tilde{X'} \ar[dd]^{f'} \ar[ld]^{\pi'} \\
Y'  \ar[dd]  \ar[rr]  &&  X'  \ar[dd]^<<<<<{f}  \\
& \tilde{Y}\times_{Y}\tilde{Y} \ar'[d][dd] \ar[ld] \ar'[r][rr]  &&\tilde{X}\times_{X}\tilde{X} \ar[dd]^{p_1} \ar[ld]^{p_2} \\
\tilde{Y}  \ar[dd]  \ar[rr]  &&  \tilde{X}  \ar[dd]^<<<<<{\pi}  \\
& \tilde{Y}   \ar'[r][rr]  \ar[ld]   &&  \tilde{X}  \ar[ld]^{\pi}   \\
Y \ar[rr]  && X   }
\]
%
\end{minipage}
\end{equation} 
with morphisms labeled as indicated. Since $\pi'$ is an envelope, there exists $\tilde{\alpha} \in B_{*}(\tilde{X'})$ such that $\pi'_{*}(\tilde{\alpha})=\alpha$. We have $c(\alpha)=c(\pi'_{*}(\tilde{\alpha}))= \pi''_{*} (c(\tilde{\alpha}))= \pi''_{*} ((\pi^{*}c)(\tilde{\alpha}))=0$, and it follows that $\pi^{*}$ is injective. 
By the functoriality of pull-backs it follows that $(p_1^*-p_2^*) \circ \pi^* =0$. 

Now, let $\tilde{c} \in B^{*}(\tilde{Y} \ra \tilde{X})$ be a bivariant class such that $(p_1^*-p_2^*)(\tilde{c})=0$.
We define a class $c \in B^{*}(Y \ra X)$ as follows: Given a morphism $f:X' \ra X$ and a class $\alpha \in B_{*}(X')$, form once again the fiber diagram on the left in (\ref{fiber.diagram.cube}), with morphisms as indicated, and define $c(\alpha)= \pi''_{*} (\tilde{c}(\tilde{\alpha}))$, where $\tilde{\alpha}\in B_{*}(\tilde{X'} )$ is any class satisfying $\pi'_{*}(\tilde{\alpha})=\alpha$ (which exists since $\pi'$ is an envelope). To see that $c(\alpha)$ is independent of the choice of $\tilde{\alpha}$, it is enough to see that $ \pi''_{*} (\tilde{c}(\beta))=0$ for each class $\beta \in B_{*}(\tilde{X'})$ such that $\pi'_{*}\beta=0$. Let $g_{j}:\tilde{X'} \times_{X'} \tilde{X'} \ra \tilde{X'}$ and $g'_{j}:\tilde{Y'} \times_{Y'} \tilde{Y'} \ra \tilde{Y'}$ be the projections, for $j=1,2$. By property $\operatorname{(D)}$, given such class $\beta$ there exists a class $\gamma \in B_{*}(\tilde{X'} \times_{X'} \tilde{X'})$ such that $\beta=g_{1*}(\gamma)-g_{2*}(\gamma)$. Since $\pi'' \circ g'_{1} = \pi'' \circ g'_{2}$, it follows that $ \pi''_{*} (\tilde{c}(\beta)) = \pi''_{*} (\tilde{c}(g_{1*}(\
\gamma)-g_{2*}(\gamma)))=(( \pi'' \circ g'_{1})_{*}-( \pi'' \circ g'_{2})_{*})(\tilde{c}(\gamma))=0$, and then $c$ is well defined. It is straightforward to verify that $c$ satisfies the conditions $\operatorname{(C_{1})}$-$\operatorname{(C_{4})}$, so this construction yields a bivariant class $c \in B_{*}( Y \ra X)$. To finish the proof we show that $\pi^{*}c=\tilde{c}$. For this, let $f: X' \ra \tilde{X} $ be any morphism and consider a class $\alpha \in B_{*}(X')$ and any class $\tilde{\alpha} \in B_{*}(\tilde{X'})$ such that $\pi'_{*}(\tilde{\alpha})=\alpha$. Form the fiber diagram on the right in (\ref{fiber.diagram.cube}) with morphisms as indicated. Since $p_{1}^{*}(\tilde{c})=p_{2}^{*}(\tilde{c})$, we have that
\begin{align*}
	(\pi^{*}c)_{f}(\alpha)&= \pi''_{*} (   \tilde{c}_{p_{1}\circ f'} (\tilde{\alpha})  )
                              = \pi''_{*}(  ( f'^{*} p_{1}^{*} \tilde{c}) (\tilde{\alpha})  )       \\
                              &= \pi''_{*} (  ( f'^{*} p_{2}^{*} \tilde{c}) (\tilde{\alpha})  )
                              = \pi''_{*} (    \tilde{c}_{f \circ \pi' } (\tilde{\alpha})  )
                              =\tilde{c}_{f}(\alpha). 
\end{align*}
Hence $\pi^{*}c=\tilde{c}$, and the proof is complete.
\end{proof}

\begin{corollary}  \label{corollary.operational.kimura}
Let $B_{*}$ be an ROBM pre-homology theory on $\mathcal{C}=\Schk$ or $\mathcal{C}=\Schkg$ that satisfies property $\operatorname{(D)}$. Then for any projective envelope $\pi: \tilde{X}\to X$ in $\mathcal{C}$ the following sequence is exact
\[ 0 \xrightarrow{} B^{*}(X) \xrightarrow{\pi^*}  B^{*}(\tilde{X}) \xrightarrow{ p_1^*- p_2^*} B^{*}( \tilde{X}\times_X\tilde{X}). \]  
\end{corollary}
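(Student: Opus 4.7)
The plan is to deduce this corollary as an immediate special case of Theorem~\ref{thm-seq2-biv}. The key observation is purely definitional: by the notation established at the end of \S\,\ref{subsection.homology.cohomology}, we have $B^*(X) = B^*(\id_X : X \to X)$ for any scheme $X$, so the three cohomology groups appearing in the corollary are precisely the bivariant groups $B^*(X \to X)$, $B^*(\tilde X \to \tilde X)$, and $B^*(\tilde X \times_X \tilde X \to \tilde X \times_X \tilde X)$.

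First I would apply Theorem~\ref{thm-seq2-biv} to the projective envelope $\pi : \tilde X \to X$ together with the choice $Y = X$ and structure morphism $\id_X : Y \to X$. With this choice one has $\tilde Y = \tilde X \times_X X = \tilde X$, and $\tilde Y \times_Y \tilde Y = \tilde X \times_X \tilde X$, so Theorem~\ref{thm-seq2-biv} produces exactly the exact sequence
\[
0 \longrightarrow B^{*}(X \to X) \xrightarrow{\pi^*}  B^{*}(\tilde{X} \to \tilde X) \xrightarrow{ p_1^*- p_2^*} B^{*}( \tilde{X}\times_X\tilde{X} \to \tilde X \times_X \tilde X),
\]
which upon unraveling the notation $B^*(-) = B^*(\id : - \to -)$ is exactly the sequence claimed.

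Since Theorem~\ref{thm-seq2-biv} requires only property $\operatorname{(D)}$ for $B_*$ and the existence of the projective envelope $\pi$, both of which are hypothesized in the corollary, no further input is needed. There is no real obstacle here — the work was all done in Theorem~\ref{thm-seq2-biv}, and the corollary is a direct specialization to the diagonal/cohomological case.
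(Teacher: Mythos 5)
Your proof is correct and is exactly the specialization the paper intends: the corollary is stated without proof precisely because it is Theorem~\ref{thm-seq2-biv} applied with $Y = X$ and the morphism $\id_X$, so that $\tilde Y = \tilde X$, $\tilde Y \times_Y \tilde Y = \tilde X \times_X \tilde X$, and all three bivariant groups become the cohomology groups $B^*(-) = B^*(\id : - \to -)$. Nothing further is needed.
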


The following lemma is a simple consequence of the localization property $\operatorname{(L)}$ or of the property $\operatorname{(D)}$.

\begin{lemma}  \label{classes.decomposition}
Let $B_{*}$ be an ROBM pre-homology theory on $\mathcal{C}=\Schk$ or $\mathcal{C}=\Schkg$ that satisfies either property $\operatorname{(L)}$ or property $\operatorname{(D)}$. If $X= \bigcup_{i=1}^{r}Z_{i}$ where each $f_{i}:Z_{i} \ra X$ is a closed subscheme of $X$ then
\[
B_{*}(X)=\Sigma_{i=1}^{r} f_{i \, *} (B_{*}(Z_{i})) 
\] 
\end{lemma}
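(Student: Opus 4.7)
The plan is to give two independent arguments, one from property $\operatorname{(L)}$ and one from property $\operatorname{(D)}$, since either hypothesis suffices.

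For the $\operatorname{(D)}$ argument I would construct a very explicit projective envelope, namely $\pi : \tilde X := \coprod_{i=1}^r Z_i \to X$ with components the given closed immersions $f_i$. This $\pi$ is projective, and to see it is an envelope I would argue as follows: any (equivariantly closed, in the $\Schkg$ case) subvariety $V \subseteq X$ is irreducible, and the decomposition $V = \bigcup_i (V \cap Z_i)$ into finitely many closed subsets forces $V \subseteq Z_j$ for some $j$; then the copy of $V$ inside the $j$-th component of $\tilde X$ maps isomorphically onto $V \subseteq X$. Property $\operatorname{(D)}$ immediately gives that $\pi_* : B_*(\tilde X) \to B_*(X)$ is surjective, and the additivity axiom $(D_1)$ identifies $B_*(\tilde X) = \bigoplus_i B_*(Z_i)$, under which $\pi_*$ is $\sum_i f_{i*}$. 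This yields the claim.

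For the $\operatorname{(L)}$ argument I would induct on $r$. The base case $r = 1$ means $f_1 : Z_1 \to X$ is surjective on points, so the open complement is empty and $\operatorname{(L)}$ forces $f_{1*}$ to be surjective. For the inductive step, with $X = \bigcup_{i=1}^{r+1} Z_i$, set $j : U := X \setminus Z_{r+1} \to X$ and $U_i := Z_i \cap U$ for $i \le r$, so $U = \bigcup_{i=1}^r U_i$ is covered by $r$ closed subschemes. Given $\alpha \in B_*(X)$, use the inductive hypothesis to write $j^*\alpha = \sum_{i=1}^r (f_i')_* \gamma_i$ for some $\gamma_i \in B_*(U_i)$, where $f_i' : U_i \to U$ is the inclusion. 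Next, apply $\operatorname{(L)}$ to the open immersion $U_i \hookrightarrow Z_i$ to lift each $\gamma_i$ to a class $\tilde\gamma_i \in B_*(Z_i)$. Axiom $(A_2)$ applied to the fiber square obtained by base-changing the projective $f_i$ along the smooth $j$ then gives $j^* f_{i*}(\tilde\gamma_i) = (f_i')_* \gamma_i$, so $\alpha - \sum_i f_{i*}(\tilde\gamma_i)$ lies in $\ker j^*$. One more application of $\operatorname{(L)}$ identifies this kernel with $f_{r+1,*} B_*(Z_{r+1})$, closing the induction.

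The $\operatorname{(D)}$ route is essentially cost-free once one notices that $\coprod Z_i \to X$ is an envelope; the only thing to check is that condition on subvarieties, which is just irreducibility. For the $\operatorname{(L)}$ route the main subtlety — such as it is — is keeping track of base-change compatibilities: one needs $\operatorname{(L)}$ twice (first to write $j^*\alpha$ via the induction on $U$ and second to lift $\gamma_i$ through $B_*(Z_i) \twoheadrightarrow B_*(U_i)$), and one needs $(A_2)$ to compare $j^* f_{i*}$ with $(f_i')_* j_i^*$. None of this is hard, but it is the place where a careless argument could fail, so it is what I would write out most carefully.
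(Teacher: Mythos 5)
Your proposal is correct and follows essentially the same route as the paper: for $\operatorname{(D)}$ the paper likewise uses that $\coprod_i Z_i \to X$ is a projective envelope together with additivity, and for $\operatorname{(L)}$ it uses the same combination of the localization sequence, surjectivity of restriction to open subsets of the $Z_i$, and the base-change axiom $(A_2)$ (the paper merely reduces to $r=2$ first and restricts to $X\smallsetminus Z_1\subseteq Z_2$, whereas you run the induction on $r$ directly via $U=X\smallsetminus Z_{r+1}$). The only micro-detail left implicit in your base case is that $B_*(\emptyset)=0$, which follows from additivity $(D_1)$ applied to the empty family.
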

\begin{proof}
The general case follows at once from the case $r=2$, so we assume that $X= Z_{1} \cup Z_{2}$. If $B_{*}$ satisfies $\operatorname{(L)}$, we have that the following sequence is exact
\[
B_{*}(Z_{1}) \xrightarrow{ f_{1 \, *} } B_{*}(X) \xrightarrow{f_{2}|^{*}} B_{*}(X \smallsetminus Z_{1}) \xrightarrow{} 0.
\] 
By the compatibility of pull-backs and push-forwards and using localization it is clear that $f_{2}|^{*}$ maps $f_{2 \, *}(B_{*}(Z_{2}))$ onto $B_{*}(X \smallsetminus Z_{1})$. It is clear now that $B_{*}(X)= f_{1 \, *} (B_{*}(Z_{1})) + f_{2 \, *}(B_{*}(Z_{2}))$ as desired. If $B_{*}$ satisfies $\operatorname{(D)}$, the result follows since $B_{*}$ is additive and the projective morphism from the disjoint union $Z_{1} \coprod Z_{2} \ra X$ induced by the inclusions is an envelope.
\end{proof}

The following result can be proved as a corollary of Theorem~\ref{thm-seq2-biv}. It gives an inductive method for computing bivariant groups. 

\begin{theorem} \label{thm-biv-image}
Let $B_{*}$ be an ROBM pre-homology theory on $\mathcal{C}=\Schk$ or $\mathcal{C}=\Schkg$.
Let $\pi: \tilde{X} \to X$ be a projective and birational envelope in $\cC$. Let $Y \to X$ be a morphism in $\cC$ and $\tilde{Y} = \tilde{X}\times_X Y$.
Assume that $B_*$ satisfies the conclusion of Lemma~\ref{classes.decomposition} and that the sequence $(\ref{sequence.kimura.bivariant})$ in Theorem~\ref{thm-seq2-biv} is exact (e.g. it is enough to assume that $B_{*}$ satisfies $\operatorname{(D)}$).
Let $\pi: \pi^{-1}(U)\isomto U$ for some open dense $U\subset X$. Let $S_i\subset X$ be closed subschemes, such that $X\setmin U = \cup S_i$. Let $E_i = \pi^{-1}(S_i)$ and let $\pi_i: E_i\to S_i$ be the induced morphism. Then for a class $\tilde{c}\in B^{*}(\tilde{Y}\to\tilde{X})$ the following are equivalent:
\begin{enumerate}
 \item \label{kim.i} $\tilde{c} = \pi^*(c)$ for some $c\in B^{*}(Y\to X)$.
\item \label{kim.ii} For all $i$, $\tilde{c}|_{E_i} = \pi_i^*(c_i)$ for some $c_i\in B^{*}(Y \times_X S_i \to S_i)$.
\end{enumerate}
\end{theorem}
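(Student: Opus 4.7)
The direction $(\ref{kim.i})\Rightarrow(\ref{kim.ii})$ should be immediate from the functoriality of bivariant pull-backs: if $\tilde{c}=\pi^{*}(c)$, then applying the pull-back along $S_i\hookrightarrow X$ and then comparing via the factorization $E_i\to S_i\to X$ (which forms a fiber diagram with $\tilde{X}\to X$) gives $\tilde{c}|_{E_i}=\pi_i^{*}(c|_{S_i})$, so one takes $c_i=c|_{S_i}$.

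For the converse $(\ref{kim.ii})\Rightarrow(\ref{kim.i})$, the plan is to invoke the exactness of the sequence (\ref{sequence.kimura.bivariant}) from Theorem~\ref{thm-seq2-biv}. Setting $W=\tilde{X}\times_{X}\tilde{X}$ and $V=\tilde{Y}\times_{Y}\tilde{Y}$, it suffices to prove that the bivariant class $d:=(p_{1}^{*}-p_{2}^{*})(\tilde{c})\in B^{*}(V\to W)$ is the zero class, i.e. that $d_{g}^{(m)}=0$ for every morphism $g:W'\to W$ and every integer $m$. The key geometric input is the set-theoretic decomposition
\[
W \;=\; \Delta(\tilde{X})\;\cup\;\bigcup_i\bigl(E_i\times_{S_i}E_i\bigr),
\]
which holds because $\pi$ is birational and an isomorphism over $U=X\setminus\bigcup S_i$: any pair $(x_1,x_2)\in W$ with $\pi(x_1)=\pi(x_2)\in U$ must satisfy $x_1=x_2$.

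Given an arbitrary $g:W'\to W$ and $\alpha\in B_{*}(W')$, pulling the above cover back by $g$ writes $W'$ set-theoretically as the union of $g^{-1}\Delta(\tilde{X})$ and the $g^{-1}(E_i\times_{S_i}E_i)$, so the hypothesis on $B_{*}$ (the conclusion of Lemma~\ref{classes.decomposition}) expresses $\alpha$ as a sum of push-forwards along the closed immersions $\iota_j:Z_j\hookrightarrow W'$ of these preimages. By axiom $\operatorname{(C_{1})}$, $d_g(\iota_{j*}\beta)=\iota'_{j*}\bigl(d_{g\iota_j}(\beta)\bigr)$, so I only need to check that $d$ vanishes after composing the base with either $\Delta:\tilde{X}\to W$ or $q_i:E_i\times_{S_i}E_i\to W$. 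For the diagonal, $p_1\circ\Delta=p_2\circ\Delta=\id_{\tilde{X}}$, so functoriality of the bivariant pull-back (axiom $\operatorname{(B_{3})}$) gives $\Delta^{*}p_j^{*}\tilde{c}=\tilde{c}$ for $j=1,2$, hence $\Delta^{*}d=0$. For the $i$-th piece, $p_j\circ q_i=\iota_i\circ p_j^{i}$ where $p_j^{i}$ are the two projections $E_i\times_{S_i}E_i\to E_i$, so $q_i^{*}d=(p_1^{i})^{*}(\iota_i^{*}\tilde{c})-(p_2^{i})^{*}(\iota_i^{*}\tilde{c})$; by hypothesis (\ref{kim.ii}), $\iota_i^{*}\tilde{c}=\pi_i^{*}c_i$, so this equals $(\pi_i p_1^{i})^{*}c_i-(\pi_i p_2^{i})^{*}c_i$, which vanishes since $\pi_i\circ p_1^{i}=\pi_i\circ p_2^{i}$ as morphisms $E_i\times_{S_i}E_i\to S_i$.

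The main potential obstacle is purely bookkeeping: confirming that the set-theoretic cover of $W$ by $\Delta(\tilde{X})$ and the $E_i\times_{S_i}E_i$ really does pull back to a set-theoretic cover of any base change $W'\to W$ (which is clear) and that the hypothesis on $B_{*}$ in Lemma~\ref{classes.decomposition} can legitimately be applied on $W'$ to these particular closed subschemes, even when $W$ itself is non-reduced. Once this decomposition-plus-reduction step is in place, the two cases above close the argument and exactness of (\ref{sequence.kimura.bivariant}) produces the desired $c\in B^{*}(Y\to X)$ with $\pi^{*}(c)=\tilde{c}$.
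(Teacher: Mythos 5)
Your proposal is correct and follows essentially the same route as the paper: the forward direction by functoriality of pull-backs, and the converse by reducing to $p_1^*\tilde{c}=p_2^*\tilde{c}$ via the exact sequence of Theorem~\ref{thm-seq2-biv}, then using the set-theoretic cover of $\tilde{X}\times_X\tilde{X}$ by the diagonal and the $E_i\times_{S_i}E_i$ (which coincide with the $E_i\times_X E_i$ of the paper, as $S_i\to X$ is a closed immersion), Lemma~\ref{classes.decomposition}, axiom $\operatorname{(C_1)}$, and the identities $\pi_i\circ p_1^i=\pi_i\circ p_2^i$ and $p_j\circ\Delta=\id_{\tilde{X}}$. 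The bookkeeping concern you raise is harmless: the cover is a union of closed subschemes regardless of reducedness, which is all Lemma~\ref{classes.decomposition} requires.
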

\begin{proof}
If $\tilde{c} = \pi^*(c)$ for some $c\in B^{*}(Y\to X)$, then by the functoriality of pull-backs $\tilde{c}|_{E_i} = \pi_i^*(c|_{S_i})$ for all $i$, and then (\ref{kim.ii}) holds if we take $c_{i} = c|_{S_i} \in B^{*}(Y \times_X S_i \to S_i)$. Reciprocally, assume that there are classes $c_{i}$ as in (\ref{kim.ii}). Let $p_{1},p_{2}:\tilde{X}\times_{X}\tilde{X}\ra \tilde{X}$ be the projections. By Theorem~\ref{thm-seq2-biv} it is enough to show that $p_{1}^{*}\tilde{c}=p_{2}^{*}\tilde{c}$, i.e., that for any morphism $f: Z \ra \tilde{X} \times_{X} \tilde{X}$ and for any class $\alpha \in B_{*}Z$ we have that $(p_{1}^{*}\tilde{c})(\alpha)=(p_{2}^{*}\tilde{c})(\alpha)$. Let $f'_{i}: E_{i}\times_{X}E_{i} \ra \tilde{X} \times_{X} \tilde{X}$ be the corresponding closed embeddings and let $\Delta: \tilde{X} \ra \tilde{X} \times_{X} \tilde{X}$ be the diagonal morphism which is also a closed embedding. Notice that $\tilde{X} \times_{X} \tilde{X}$ is the union the closed subschemes $\Delta(\tilde{X})$ and $f'_{i}(E_{i} \times_{X} E_{i})$ for all $i$. Let $Z_{0}=f^{-1}(\Delta(\tilde{X}))$ and $Z_{i}=f^{-1}(f_{i}'(E_{i} \times_{X} E_{i}))$, with inclusions $\Delta':Z_{0} \ra Z$ and ${f''_{i}}:Z_{i} \ra Z$, for each $i$. 
Then, by Lemma~\ref{classes.decomposition}, in order to prove that $(p_{1}^{*}\tilde{c})(\alpha)=(p_{2}^{*}\tilde{c})(\alpha)$, we can assume that either $\alpha= f''_{i \, *} (\alpha_{i})$ for some $i$ and some $\alpha_{i} \in B_{*}(Z_{i})$ or $\alpha= \Delta'_* (\alpha_{0})$ for some $\alpha_{0} \in B_{*}(Z_{0})$. 
In the first case, for $j=1$ and $j=2$ consider the fiber diagram
\[
\xymatrix{
E_{i}\times_{X}E_{i} \ar[d]^{p_{j}|} \ar[r]^{f'_{i}} &\tilde{X} \times_{X} \tilde{X} \ar[d]^{p_{j}} \\
E_{i} \ar[d]^{\pi_{i}} \ar[r]^{\tilde{f_{i}}} &\tilde{X} \ar[d]^{\pi} \\
S_{i} \ar[r]^{f_{i}} &X }
\]
with morphisms as labeled. Let $\Delta'':Z_{0} \times_{X} Y \ra Z \times_{X} Y$ and $g''_{i}:Z_{i} \times_{X} Y \ra Z \times_{X} Y$ be the morphisms obtained from $\Delta'$ and $f''_{i}$ by base change. We have
\begin{align*}
(p_{j}^{*}\tilde{c})(\alpha)
&=(p_{j}^{*}\tilde{c})( f''_{i \, *} (\alpha_{i}))= g''_{i \, *} ((p_{j}^{*}\tilde{c})(\alpha_{i}))= g''_{i \, *} (\tilde{c}(\alpha_{i}))  \\
&= g''_{i \, *} ((\tilde{f_{i}}^{*}\tilde{c})(\alpha_{i}))= g''_{i \, *} (({\pi_{i}}^{*}c_{i})(\alpha_{i}))= g''_{i \, *} (({p_{j}|}^{*}{\pi_{i}}^{*}c_{i})(\alpha_{i}))    \\
&= g''_{i \, *} ((({\pi_{i}} \circ p_{j}| )^{*}c_{i})(\alpha_{i})).	
\end{align*}
Since ${\pi_{i}} \circ p_{1}| ={\pi_{i}} \circ p_{2}|$, it follows that in the first case $(p_{1}^{*}\tilde{c})(\alpha)=(p_{2}^{*}\tilde{c})(\alpha)$. In the second case, for $j=1$ and $j=2$, we have
\begin{align*}
(p_{j}^{*}\tilde{c})(\alpha)
&=(p_{j}^{*}\tilde{c})(\Delta'_{*}(\alpha_{0}))=\Delta''_{*}((p_{j}^{*}\tilde{c})(\alpha_{0}))=\Delta''_{*}((\Delta^{*}p_{j}^{*}\tilde{c})(\alpha_{0}))   \\
&=\Delta''_{*}(((p_{j} \circ \Delta)^{*}\tilde{c})(\alpha_{0})) = \Delta''_{*}(({\id_{\tilde{X}}}^{*}\tilde{c})(\alpha_{0})) = \Delta''_{*}(\tilde{c}(\alpha_{0})).
\end{align*}
Therefore, in the second case $(p_{1}^{*}\tilde{c})(\alpha)=(p_{2}^{*}\tilde{c})(\alpha)$, and the proof is complete.
\end{proof}


\subsection{Kimura Type Descent Sequence for Bivariant Equivariant Theories and Inductive Computation of Bivariant Equivariant Groups}    \label{kimura.induced.equivariant.OBM}

Theorem~\ref{thm-seq2-biv} and Theorem~\ref{thm-biv-image} proved above can be applied to the equivariant theory $B_*^G$, provided that it satisfies property $\operatorname{(D)}$. We can not prove property $\operatorname{(D)}$ for $B_*^G$ assuming that it holds for $B_*$. We will therefore give a different proof of the statements of Theorem~\ref{thm-seq2-biv} and Theorem~\ref{thm-biv-image} for $B_*^G$ that depends only on $B_*$ satisfying property $\operatorname{(D)}$. We give proofs for the bivariant cohomology groups $B^*_G(X)$ only.

\begin{theorem} \label{theorems.bivariant.equivariant} Let $B_{*}$ be an ROBM pre-homology theory on $\Schk$ that satisfies properties $\operatorname{(H)}$, $\operatorname{(L)}$ and $\operatorname{(D)}$. Let $\pi:\tilde{X} \ra X$ be a projective envelope in $\Schkg$, and let the terminology be as in Theorem~\ref{thm-biv-image}.
\begin{itemize}
\item[$\operatorname{(a)}$] The following sequence is exact
\[ 0 \xrightarrow{} B_{G}^{*}(X) \xrightarrow{\pi^*}  B_{G}^{*}(\tilde{X}) \xrightarrow{p_1^*-p_2^*} B_{G}^{*}( \tilde{X}\times_X\tilde{X}). \] 
\item[$\operatorname{(b)}$] If $\pi$ is also birational, then for a class $\tilde{c}\in B_{G}^{*}(\tilde{X})$ the following are equivalent:
\begin{enumerate}
 \item \label{kim.i.2} $\tilde{c} = \pi^*(c)$ for some $c\in B_{G}^{*}(X)$.
\item \label{kim.ii.2} For all $i$, $\tilde{c}|_{E_i} = \pi_i^*(c_i)$ for some $c_i\in B_{G}^{*}(S_i)$.
\end{enumerate}
\end{itemize}
\end{theorem}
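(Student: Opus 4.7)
The approach is to reduce both parts of the theorem to their non-equivariant counterparts by passing to each level $j$ of the good system of representations and then taking the inverse limit. Recall from \S\,\ref{envelopes} that if $\pi:\tilde{X}\to X$ is a projective (resp.\ projective and birational) equivariant envelope, then $\pi_j: \tilde{X}\times^G U_j\to X\times^G U_j$ is a projective (resp.\ projective and birational) envelope in $\Schk$; moreover, the decomposition $X\setmin U=\bigcup_i S_i$ into $G$-invariant closed subschemes induces $(X\times^G U_j)\setmin (U\times^G U_j)=\bigcup_i(S_i\times^G U_j)$, and $\pi_j^{-1}(S_i\times^G U_j)=E_i\times^G U_j$. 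Proposition~\ref{proposition.limit} identifies $B_G^*(Y)\cong\varprojlim_j B^*(Y\times^G U_j)$ naturally with respect to pull-backs, while Theorems~\ref{thm-seq2-biv} and~\ref{thm-biv-image} apply at each level since $B_*$ on $\Schk$ satisfies $\operatorname{(D)}$.

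For part $(\mathrm{a})$, applying Theorem~\ref{thm-seq2-biv} to the envelope $\pi_j$ and using the identification $(\tilde{X}\times_X\tilde{X})\times^G U_j \isom (\tilde{X}\times^G U_j)\times_{X\times^G U_j}(\tilde{X}\times^G U_j)$ yields an exact sequence
\[ 0\to B^*(X\times^G U_j)\xrightarrow{\pi_j^*} B^*(\tilde{X}\times^G U_j)\xrightarrow{p_1^*-p_2^*} B^*((\tilde{X}\times_X\tilde{X})\times^G U_j) \]
for every $j$. Since $\varprojlim$ is left exact, passing to the inverse limit over $j$ and invoking Proposition~\ref{proposition.limit} produces the desired exact sequence for $B_G^*$.

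For part $(\mathrm{b})$, I would argue level-by-level. Write $\tilde{c}=(\tilde{c}_j)$ and $c_i=(c_{i,j})_j$ under Proposition~\ref{proposition.limit}. The hypothesis $\tilde{c}|_{E_i}=\pi_i^*(c_i)$ in $B_G^*(E_i)$ translates, by the naturality of the isomorphism with respect to pull-backs, into $\tilde{c}_j|_{E_i\times^G U_j}=(\pi_i)_j^*(c_{i,j})$ for every $i$ and $j$. Applying Theorem~\ref{thm-biv-image} to $\pi_j$ together with the induced decomposition produces, for each $j$, a class $c_j\in B^*(X\times^G U_j)$ satisfying $\pi_j^*c_j=\tilde{c}_j$. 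The uniqueness of $c_j$ (guaranteed by the injectivity already established in part $(\mathrm{a})$ at level $j$) forces the collection $(c_j)_j$ to be compatible under the transition maps of the inverse system, so they assemble into a class $c\in B_G^*(X)$ with $\pi^*c=\tilde{c}$. The converse implication is immediate from the functoriality of pull-backs.

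The main obstacle, flagged in the preamble to the statement, is that property $\operatorname{(D)}$ is not known to be inherited by $B_*^G$, which blocks a direct imitation of the proofs of Theorems~\ref{thm-seq2-biv} and~\ref{thm-biv-image} in the equivariant theory. The entire point of the level-by-level reduction is to bypass this: Proposition~\ref{proposition.limit} trades $B_G^*$ for an inverse limit of non-equivariant bivariant groups, where property $\operatorname{(D)}$ holds by hypothesis. The remaining bookkeeping is routine: one needs to check that the structure required to apply Theorem~\ref{thm-biv-image} at each level (the envelope and its birationality, the open dense subset, and the closed cover) is genuinely preserved by the Borel construction, which is the content of the closing remarks of \S\,\ref{envelopes}.
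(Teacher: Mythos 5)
Your part (a) is essentially the paper's argument: apply Corollary~\ref{corollary.operational.kimura} to each envelope $\pi_j:\tilde{X}\times^G U_j\to X\times^G U_j$ and pass to the left exact inverse limit via Proposition~\ref{proposition.limit}. For part (b), however, you take a genuinely different route. The paper never descends to the levels of the Borel construction for this step: it applies Theorem~\ref{thm-biv-image} \emph{directly} to the theory $B_*^G$ on $\Schkg$, which is legitimate because that theorem's hypotheses are not property $\operatorname{(D)}$ itself but two of its consequences --- exactness of the Kimura sequence (supplied by part (a)) and the closed-cover decomposition $B_*(X)=\sum_i f_{i*}B_*(Z_i)$ of Lemma~\ref{classes.decomposition}, which for $B_*^G$ is supplied by Lemma~\ref{surjectivity.equivariant.envelopes} applied to the envelope $Z_1\coprod Z_2\to X$. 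You instead apply Theorem~\ref{thm-biv-image} in $\Schk$ to each $\pi_j$ separately and then glue the resulting classes $c_j$ into an element of $\varprojlim_j B^*(X\times^G U_j)\cong B_G^*(X)$; the compatibility you invoke is indeed forced by level-wise injectivity, since with $\phi_{ij}:X\times^G U_i\to X\times^G U_j$ and $\tilde{\phi}_{ij}$ the corresponding map upstairs one has $\pi_i^*(\phi_{ij}^*c_j)=\tilde{\phi}_{ij}^*(\pi_j^*c_j)=\tilde{\phi}_{ij}^*\tilde{c}_j=\tilde{c}_i=\pi_i^*c_i$, whence $\phi_{ij}^*c_j=c_i$. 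Both arguments are correct. Yours trades the verification of the decomposition property for $B_*^G$ for this compatibility check in the limit, and leans more heavily on Proposition~\ref{proposition.limit} --- in particular on its naturality applied to the inclusions $E_i\hookrightarrow\tilde{X}$ and to the maps $\pi_i$, and on the fact, recorded at the end of \S\,\ref{envelopes}, that projectivity, birationality, the dense open subset and the closed cover all survive the operation $\times^G U_j$. The paper's version is shorter once Lemma~\ref{surjectivity.equivariant.envelopes} is in hand and applies uniformly without re-examining the levels, but your reduction makes the role of property $\operatorname{(D)}$ for the non-equivariant theory more transparent.
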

\begin{proof}
\begin{itemize}
\item[$\operatorname{(a)}$] Since for each $i$ the map $\tilde{X}\times^G U_i \to X\times^G U_i$ is an envelope,  by Corollary \ref{corollary.operational.kimura} the sequence
 \[ 0 \longrightarrow B^*(X \times^{G} U_{i} ) {\longrightarrow} B^*(\tilde{X} \times^{G} U_{i}) {\longrightarrow} B^*( (\tilde{X}\times_X\tilde{X}) \times^{G}   U_{i} )
 \]
is exact. Applying the left exact functor $\varprojlim$ and using Proposition~\ref{proposition.limit} gives the desired result.

\item[$\operatorname{(b)}$] In view of part $\operatorname{(a)}$, the conclusion follows from Theorem~\ref{thm-biv-image} if we show that the ROBM pre-homology theory $B^{G}_{*}$ satisfies the conclusion of Lemma~\ref{classes.decomposition}. 

For this, it is enough to consider the case $r=2$, so we let $X=Z_{1} \cup Z_{2}$ and Let $Z = Z_{1}\coprod Z_{2}$. The projective morphism $Z \ra X$ induced by the inclusions is an envelope, hence $B_*^G(Z)\to B_*^G(X)$ is surjective by Lemma~\ref{surjectivity.equivariant.envelopes}  \qedhere
\end{itemize} 
\end{proof}

\section{An Overview of Algebraic Cobordism Theory}              \label{section.overview.cobordism}

In this section we recall the definition and main properties of \emph{algebraic cobordism} $\Omega_{*}$. This theory was constructed by Levine and Morel in \cite{Levine-Morel}. Later, Levine and Pandharipande \cite{Levine-Pandharipande} found a geometric presentation of the cobordism groups. We will use the construction of Levine-Pandharipande as the definition, but refer to Levine-Morel for its properties. This construction and the proofs of some of facts stated below use resolution of singularities, factorization of birational maps and some Bertini-type theorems, then we will assume for the remainder of this article that the field $k$ has characteristic zero.

The \emph{equivariant algebraic cobordism} $\Omega^G_*$ was constructed first by Krishna \cite{Krishna} and by Heller and Malag\'on-L\'opez \cite{Heller-Malagon}. Krishna and Uma \cite{Krishna-Uma} showed how to compute the equivariant and ordinary cobordism groups of smooth toric varieties; we will recall their result in \S\,\ref{section.cobordism.toric}.

For $X$ in $\Schk$, let $\cM(X)$ be the set of isomorphism classes of projective morphisms $f: Y\to X$ for $Y\in \Smk$. This set is a monoid under disjoint union of the domains; let $\Mp(X)$ be its group completion. The elements of $\Mp(X)$ are called cycles. The class of $f: Y\to X$ in $\Mp(X)$ is denoted $[f: Y\to X]$. The group $\Mp(X)$ is free abelian, generated by the cycles $[f: Y\to X]$ where $Y$ is irreducible.

A double point degeneration is a morphism $\pi: Y\to \PP^1$, with $Y \in \Smk$ of pure dimension, such that $Y_\infty = \pi^{-1}(\infty)$ is a smooth divisor on $Y$ and $Y_0=\pi^{-1}(0)$ is a union $A\cup B$ of smooth divisors intersecting transversely along $D=A\cap B$.  Define $$\PP_D = \PP(\cO_D(A)\oplus \cO_D) = \Proj_{\mathcal{O}_{D}}(\Sym_{\mathcal{O}_{D}}^{*}(\cO_D(A)\oplus \cO_D)),$$ where $\cO_D(A)$ stands for $\cO_Y(A)|_D$. (Notice that $\PP(\cO_D(A)\oplus \cO_D) \isom \PP(\cO_D(B) \oplus \cO_D)$ because $\cO_D(A+B)\isom \cO_D$.) 

Let $X\in \Schk$ and let $Y\in \Smk$ have pure dimension. Let $p_1, p_2$ be the two projections of $X\times \PP^1$.  A double point relation is defined by a projective morphism $\pi: Y\to X\times\PP^1$, such that $p_2\circ \pi: Y\to \PP^1$ is a double point degeneration. Let 
\[ [Y_\infty \to X], \quad [A\to X],\quad [B\to X], \quad [\PP_D \to X] \]
be the cycles obtained by composing with $p_1$. The double point relation is 
\[ [Y_\infty \to X] -[A\to X] - [B\to X] + [\PP_D\to X] \in \Mp(X). \]

Let $\Rel(X)$ be the subgroup of $\Mp(X)$ generated by all the double point relations. The cobordism group of $X$ is defined to be
\[ \Omega_*(X) = \Mp(X)/\Rel(X).\]
The group $\Mp(X)$ is graded so that $[f: Y\to X]$ lies in degree $\dim Y$ when $Y$ has pure dimension. Since double point relations are homogeneous, this grading gives a grading on $\Omega_*(X)$. We write $\Omega_n(X)$ for the degree $n$ part of  $\Omega_*(X)$. 

There is a functorial push-forward homomorphism $f_*: \Omega_*(X)\to \Omega_*(Z)$ for $f: X\to Z$ projective, and a functorial pull-back homomorphism $g^*: \Omega_*(Z)\to \Omega_{*+d}(X)$ for $g: X\to Z$ a  
smooth morphism of relative dimension $d$. These homomorphisms are both defined on the cycle level; the pull-back does not preserve grading. The exterior product on $\Omega_*(X)$ is also defined on the cycle level:
\[ [Y\to X] \times [Z\to W] = [Y\times Z \to X\times W].\]
A much harder result proved in \cite{Levine-Morel} is the existence of  functorial pull-backs $g^*$ along l.c.i. morphisms $g$, and more generally, the existence of refined l.c.i. pull-backs.

The groups $\Omega_*(X)$ with these projective push-forward, l.c.i. pull-back and exterior products form an oriented Borel-Moore homology theory (see \cite{Levine-Morel}). Moreover, with those refined l.c.i. pull-backs it is also an ROBM pre-homology theory (see \cite{Levine-Morel}). 

As in the case of a general ROBM pre-homology theory, $\Omega_*(\Spec k)$ is a ring, $\Omega_*(X)$ is a module over $\Omega_*(\Spec k)$ for general $X$, and $\Omega_*(X)$ is an algebra over $\Omega_*(\Spec k)$ for smooth $X$. When $X$ is smooth and has pure dimension, we also use the cohomological notation
\[ \Omega^*(X) = \Omega_{\dim X- *}(X).\]
Then $\Omega^*(X)$ is a graded algebra over the graded ring $\Omega^*(\Spec k)$. The class $1_X = [\id_X: X\to X]$ is the identity of the algebra. Similar conventions are used for the equivariant cobordism groups.

\begin{remark}
Algebraic cobordism satisfies the homotopy property $\operatorname{(H)}$ and the localization property $\operatorname{(L)}$ \cite{Levine-Morel}, as well as the descent property $\operatorname{(D)}$ \cite{descentseq}. It follows that \emph{everything} proved in the previous sections for general ROBM pre-homology theories can be applied to the algebraic cobordism theory. This includes the construction of the equivariant cobordism theory $\Omega^G_*$, the operational cobordism theory $\Omega^*$ and the operational equivariant cobordism theory $\Omega_G^*$. This also includes the descent exact sequences for the operational theories $\Omega^*$ and $\Omega_G^*$ and the inductive method for their computation using envelopes. 
\end{remark}

The following part of Theorem~\ref{theorems.bivariant.equivariant} applied to $\Omega_G^*(X)$ will be used in the next section.

\begin{theorem} \label{thm-ind}
Assume that $\pi: \tilde{X}\to X$ is a projective birational envelope in $\Schkg$, with $\pi: \pi^{-1}(U)\isomto U$ for some open nonempty $G$-equivariant $U\subset X$. Let $S_i\subset X$ be closed $G$-equivariant subschemes, such that $X\setmin U = \cup S_i$. Let $E_i = \pi^{-1}(S_i)$ and let $\pi_i: E_i\to S_i$ be the induced morphism. Then $\pi^*: \Omega_G^*(X) \to \Omega^*_G(\tilde{X})$ is injective and for a class $\tilde{c}\in \Omega^*_G(\tilde{X})$ the following are equivalent:
\begin{enumerate}
 \item $\tilde{c} = \pi^*(c)$ for some $c\in\Omega^*_G(X)$.
\item For all $i$, $\tilde{c}|_{E_i} = \pi_i^*(c_i)$ for some $c_i\in \Omega^*_G(S_i)$.
\end{enumerate}
\end{theorem}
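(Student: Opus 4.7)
The proof plan is essentially immediate: Theorem~\ref{thm-ind} is a specialization of Theorem~\ref{theorems.bivariant.equivariant} to the theory $B_* = \Omega_*$, and the only thing one needs to verify is that the hypotheses of the general theorem are fulfilled in this case.

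First I would recall that, as noted in the Remark preceding the statement, the algebraic cobordism theory $\Omega_*$ on $\Schk$ is an ROBM pre-homology theory which satisfies the homotopy property $\operatorname{(H)}$ and the localization property $\operatorname{(L)}$ (both established in \cite{Levine-Morel}), as well as the descent property $\operatorname{(D)}$ (established in \cite{descentseq}). These are precisely the hypotheses under which Theorem~\ref{theorems.bivariant.equivariant} was proved; the associated equivariant operational theory $\Omega_G^*$ is then exactly the bivariant cohomology theory $B_G^*$ studied in that theorem.

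Next, the injectivity of $\pi^*: \Omega_G^*(X) \to \Omega_G^*(\tilde{X})$ follows directly from part $\operatorname{(a)}$ of Theorem~\ref{theorems.bivariant.equivariant} applied to the projective envelope $\pi : \tilde{X} \to X$ in $\Schkg$, since the Kimura-type descent sequence
\[ 0 \longrightarrow \Omega_G^*(X) \xrightarrow{\pi^*} \Omega_G^*(\tilde{X}) \xrightarrow{p_1^*-p_2^*} \Omega_G^*(\tilde{X}\times_X \tilde{X}) \]
is exact. Finally, the equivalence of conditions (1) and (2) for a class $\tilde{c}\in \Omega_G^*(\tilde{X})$ is precisely the content of part $\operatorname{(b)}$ of Theorem~\ref{theorems.bivariant.equivariant}, once we observe that $\pi$ is assumed here to be both projective and birational, with $\pi : \pi^{-1}(U)\isomto U$ for some open nonempty $G$-equivariant $U \subset X$, and with $X \setmin U = \bigcup S_i$ for closed $G$-equivariant subschemes $S_i$, so that setting $E_i = \pi^{-1}(S_i)$ and letting $\pi_i : E_i \to S_i$ be the induced morphisms matches exactly the setup of Theorem~\ref{theorems.bivariant.equivariant}$\operatorname{(b)}$.

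There is essentially no obstacle: the entire substance of the proof was already absorbed into the general statements proved in \S\,\ref{section.kimura.bivariant}. The only step that could be called nontrivial is the verification that algebraic cobordism actually satisfies $\operatorname{(D)}$, but this is not proved here and is cited from \cite{descentseq}. Thus the proof is a single sentence: apply Theorem~\ref{theorems.bivariant.equivariant} to $B_* = \Omega_*$.
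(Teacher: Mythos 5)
Your proposal is correct and matches the paper exactly: the paper gives no separate proof of Theorem~\ref{thm-ind}, presenting it as "the following part of Theorem~\ref{theorems.bivariant.equivariant} applied to $\Omega_G^*(X)$," with the hypotheses $\operatorname{(H)}$, $\operatorname{(L)}$ and $\operatorname{(D)}$ for $\Omega_*$ supplied by the preceding Remark. Your identification of the injectivity claim with the exactness of the sequence in part $\operatorname{(a)}$ and of the equivalence of (1) and (2) with part $\operatorname{(b)}$ is precisely the intended reading.
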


\subsection{Formal group law}

Algebraic cobordism is endowed with first Chern class operators associated to line bundles, whose definition agrees with the one in Definition \ref{definition.obm.first.chern.class}. We recall the formal group law satisfied by these operators.

A \emph{formal group law} on a commutative ring $R$ is a power series $F_R(u,v)\in R\llbracket u,v\rrbracket $ satisfying
\begin{enumerate}[(a)]
\item $F_R(u,0) = F_R(0,u) = u$,
\item $F_R(u,v)=  F_R(v,u)$,
\item $F_R(F_R(u,v),w) = F_R(u,F_R(v,w))$.
\end{enumerate}
Thus 
\[ F_R(u,v) = u+v +\sum_{i,j>0} a_{i,j} u^i v^j,\]
where $a_{i,j}\in R$ satisfy $a_{i,j}=a_{j,i}$ and some additional relations coming from property (c). We think of $F_R$ as giving a formal addition
\[ u+_{F_R} v = F_R(u,v).\]
There exists a unique power series $\chi(u) \in R\llbracket u\rrbracket $ such that $F_R(u,\chi(u)) = 0$. Denote $[-1]_{F_R} u = \chi(u)$. Composing $F_R$ and $\chi$, we can form linear combinations 
\[ [n_1]_{F_R} u_1 +_{F_R}   [n_2]_{F_R} u_2 +_{F_R} \cdots +_{F_R} [n_r]_{F_R} u_r \in R\llbracket u_1,\ldots,u_r\rrbracket \]
for $n_i\in \ZZ$ and $u_i$ variables.

There exists a universal formal group law $F_\LL$, and its coefficient ring $\LL$ is called the \emph{Lazard ring}. This ring can be constructed as the quotient of the polynomial ring $\ZZ[A_{i,j}]_{i,j>0}$ by the relations imposed by the three axioms above. The images of the variables $A_{i,j}$ in the quotient ring are the coefficients $a_{i,j}$ of the formal group law $F_\LL$. It is shown in \cite{Levine-Morel} that $\Omega^*(\Spec k)$ is isomorphic as a graded ring to $\LL$ with grading induced by letting $A_{i,j}$ have degree $-i-j+1$. The power series $F_\LL(u,v)$ is then homogeneous of degree $1$ if $u$ and $v$ both have degree $1$. 

The formal group law on $\LL$ describes the first Chern class operators of tensor products of line bundles:
\[ \ch(L\otimes M) = F_\LL(\ch(L), \ch(M))\]
for any line bundles $L$ and $M$ on any scheme $X$ in $\Schk$.


\section{Operational Equivariant Cobordism of Toric Varieties}     \label{section.cobordism.toric}

Let $X_\Delta$ be a smooth quasiprojective toric variety corresponding to a fan $\Delta$. Krishna and Uma in \cite{Krishna-Uma} showed that the $T$-equivariant cobordism ring of $X_\Delta$ is isomorphic to the ring of piecewise power series on the fan $\Delta$. Our goal here is to show that for any fan $\Delta$, the ring of piecewise power series on $\Delta$ is isomorphic to the operational $T$-equivariant cobordism ring of $X_\Delta$. When $X_\Delta$ is smooth, this follows from the result of Krishna and Uma by \Po duality. For singular $X_\Delta$ we follow the argument of Payne \cite{Payne} in the case of Chow theory to reduce to the smooth case.

We use the standard notation for toric varieties \cite{Fulton}. Let $N \isom \ZZ^n$ be a lattice. It determines a split torus $T$ with character lattice $M= \Hom(N,\ZZ)$. A toric variety $X_\Delta$ is defined by a fan $\Delta$ in $N$. 

Every quasiprojective toric variety $X_\Delta$ with torus $T$ is in the category ${T}{\textnormal{-}}{\operatorname{Sch}_{k}}$, since each line bundle on such variety admits a $T$-linearization. 
We will write $\Omega_*^T(X_\Delta)$ for the $T$-equivariant cobordism group of $X_\Delta$. For a smooth $X_\Delta$  we also use the cohomological notation $\Omega^*_T(X_\Delta) = \Omega_{\dim X_\Delta-*}^T(X_\Delta)$. The operational $T$-equivariant cobordism ring is denoted $\Omega^*_T(X_\Delta)$. For smooth $X_\Delta$, the two definitions of  $\Omega^*_T(X_\Delta)$ are identified via \Po duality.

  
\subsection{Graded Power Series Rings}

Let $A=\oplus_{i\in\ZZ} A_i$ be a commutative graded ring. The \emph{graded power series ring} is
\[ A\llbracket t_1,t_2,\ldots,t_n\rrbracket_{gr} = \oplus_{d\in\ZZ} S_d.\]
 Here $S_d$ is the group of degree $d$ homogeneous power series  $\sum_I a_I t^I$, where the sum runs over multi-indices $I=(i_1,\ldots,i_n)\in \ZZ_{\geq 0}^n$, $t^I=  t_1^{i_1}\cdots t_n^{i_n}$, and $a_I\in A_{d-i_1-\cdots - i_n}$.

Let $T$ be a torus determined by a lattice $N$, and let $\chi_1,\ldots,\chi_n$ a basis for the dual lattice $M$. It is shown in \cite{Krishna} that the equivariant cobordism ring of a point with trivial $T$-action is isomorphic to
\[ \Omega^*_T(\Spec k) \isom \LL\llbracket t_1,\ldots,t_n\rrbracket_{gr},\]
with $t_i$ corresponding to the first Chern class transformation  $\ch^T(L_{\chi_i})$ of the equivariant line bundle $L_{\chi_i}$. More generally, for any smooth  $G$-variety $X$, let $T$ act trivially on $X$. Then
\begin{equation}\label{eq-triv-act} \Omega^*_{G\times T} (X) \isom \Omega^*_G(X)\llbracket t_1,\ldots,t_n\rrbracket_{gr},\end{equation}
where, as before, $t_i$ correspond to the first Chern class transformations of $L_{\chi_i}$ pulled back to $X$. Both of these formulas are proved using the projective bundle formula from \cite{Levine-Morel} in the cobordism ring.

As a matter of terminology, we identify $t_i$ with $\chi_i$ and view these variables as linear functions on $N$ (or on $N_\RR = N\otimes\RR$). The ring $S=  \LL\llbracket t_1,\ldots,t_n\rrbracket_{gr}$ is called the ring of graded power series on $N$ (or on $N_\RR$). The variables $t_i$ have the following functorial property. Let  $N'\to N$ be a morphism of lattices, giving rise to an algebraic group homomorphism $T'\to T$ of the corresponding tori. Assume that the image of $N'$ in $N$ is saturated. We can then choose a basis $\chi_1,\ldots,\chi_n$ for $M$ and $\nu_1,\ldots,\nu_m$ for $M' = \Hom(N',\ZZ)$ so that the dual map $M\to M'$ takes:
\[ \chi_i \mapsto \begin{cases} 
\nu_i & \text{ for $i=1,\ldots,l$,}\\
0 & \text{ for $i=l+1,\ldots,n$.}
\end{cases}
\]
Then the pull-back map 
\[ \Omega^*_T(\Spec k) \isom \LL\llbracket t_i,\ldots,t_n\rrbracket_{gr} \to \Omega^*_{T'}(\Spec k) \isom \LL\llbracket u_1,\ldots,u_m\rrbracket_{gr} \]
is an $\LL$-algebra homomorphism taking 
\[ t_i \mapsto \begin{cases} 
u_i & \text{ for $i=1,\ldots,l$,}\\
0 & \text{ for $i=l+1,\ldots,n$.}
\end{cases}
\]
(The map of variables is extended to the morphism of power series rings in an obvious way.) We view the morphism of power series rings as pulling back power series from $N$ to $N'$.

 As a special case, consider a  saturated sublattice $N'\subset N$ giving rise to a subtorus $T'\subset T$. When the bases for $M$ and $M'$ are chosen appropriately, the pull-back map 
\[ \Omega^*_T(\Spec k) \to \Omega^*_{T'}(\Spec k)\]
can then be identified with restriction of power series from $N$ to $N'$.

A little caution must be used when identifying $\Omega^*_T(\Spec k)$ with the ring of power series $\LL\llbracket t_1,\ldots, t_n\rrbracket_{gr}$. This identification depends on the chosen basis $\chi_1,\ldots,\chi_n$ for $M$. When we change the basis to $\nu_1,\ldots,\nu_n$, with the corresponding power series ring $\LL\llbracket u_1,\ldots,u_n\rrbracket_{gr}$, then the relation between $t_i$ and $u_j$ is not linear like the relation between $\chi_i$ and $\nu_j$. If $\nu_j = \sum_i a_{ji} \chi_i$, then one has to use the formal group law $F_\LL$ to express
\[ u_j = [a_{j1}]_{F_\LL} t_1 +_{F_\LL} [a_{j2}]_{F_\LL} t_2 +_{F_\LL} \cdots +_{F_\LL} [a_{jn}]_{F_\LL} t_n.\]
The result is a power series with only linear terms in $t_i$ equal to $\sum_i a_{ji}t_i$.

There is a one-to-one correspondence between cones $\sigma\in\Delta$ and $T$-orbits in $X_\Delta$. Let $O_\sigma$ be the orbit corresponding to a cone $\sigma$. Then the stabilizer (of a point) of $O_\sigma$ is the subtorus $T'$ of $T$ corresponding to the sublattice  $N' = \Span \sigma \cap N \subset N$. Morita isomorphism \cite{Krishna-Uma} then gives
\[ \Omega^*_T(O_\sigma) = \Omega^*_T(T\times^{T'} \Spec k) \isom \Omega^*_{T'}(\Spec k).\]
We denote  $S_\sigma = \Omega^*_T(O_\sigma) \isom \LL\llbracket t_1,\ldots,t_m\rrbracket_{gr}$,  $m=\dim\sigma$, and call it the ring of graded power series on $\sigma$. When $\tau$ is a face of $\sigma$, we have the inclusion of lattices $\Span\tau\cap N \subset \Span \sigma \cap N$, giving rise to the restriction map $S_\sigma\to S_\tau$. With the basis of $M$ chosen compatibly, this restriction map is the restriction of power series.


\subsection{The Sheaf of Piecewise Graded Power Series}

A \emph{sheaf} $F$ \emph{of abelian groups on a fan} $\Delta$ consists of the data:
\begin{enumerate}
 \item An abelian group $F_\sigma$ for every $\sigma\in\Delta$, called the \emph{stalk} of $F$ at $\sigma$.
\item A morphism $\res^\sigma_\tau: F_\sigma\to F_\tau$ for $\tau\leq \sigma$, called the \emph{restriction map}.
\end{enumerate}
The restriction maps must satisfy:
\begin{enumerate}
 \item $\res^\sigma_\sigma = \id_{F_\sigma}$.
\item $\res^\tau_\rho \circ \res^\sigma_\tau =  \res^\sigma_\rho$ for $\rho\leq\tau\leq\sigma$.
\end{enumerate}

One can give a topology on the set of cones $\Delta$ by defining the open sets to be subfans of $\Delta$. Then $F$ described above gives a sheaf on this topological space.

The group of global sections $F(\Delta)$ of a sheaf $F$ is the set of $(f_\sigma)\in \prod_{\sigma\in\Delta} F_\sigma$, such that $\res^\sigma_\tau(f_\sigma) = f_\tau$ for every $\tau\leq \sigma$. To give a global section of $F$, it suffices to give $f_\sigma$ for maximal cones $\sigma$ only.

For a cone $\rho\in\Delta$, the \emph{star} $\St \rho = \{ \sigma\in \Delta| \rho\leq \sigma\}$ is a closed set in the fan topology. Let $F( \St\rho)$ be the group of global sections of the sheaf $F|_{\St\rho}$. It consists of elements $(f_\sigma)\in \prod_{\sigma\in\St \rho} F_\sigma$, such that $\res^\sigma_\tau(f_\sigma) = f_\tau$ for every $\tau\leq \sigma \in \St \rho$.

Define the \emph{sheaf} $PPS$ \emph{of piecewise power graded series on} $\Delta$ \emph{with coefficients in the Lazard ring} as the sheaf with stalks $PPS_\sigma = S_\sigma$ and restriction maps $\res^\sigma_\tau$ the restriction maps $S_\sigma\to S_\tau$ as described above. The global sections $PPS(\Delta)$ are called \emph{piecewise graded power series with coefficients in the Lazard ring on} $\Delta$. For short, we refer to $PPS$ as the sheaf of piecewise power series on $\Delta$ and to its global sections $PPS(\Delta)$ as piecewise power series on $\Delta$. 
Note that the sheaf $PPS$ is a sheaf of graded $S$-algebras: all stalks $S_\sigma$ are graded $S$-algebras and the restriction maps are homomorphisms of graded $S$-algebras. It follows that $PPS(\Delta)$ is also a graded $S$-algebra. Similarly, $PPS(\St \rho)$ is the graded $S$-algebra of piecewise power series on $\St \rho$.

We must again use caution when working with the restriction maps $S_\sigma\to S_\tau$. These are restrictions of power series, once we have chosen a suitable basis for $M$. When $\tau'$ is another face of $\sigma$, representing the restriction map $S_\sigma\to S_{\tau'}$ as restriction of power series may require a different basis for $M$ that is compatible with $\sigma$ and $\tau'$. The change of basis gives rise to an isomorphism of the graded power series rings that involves the formal group law.


\subsection{Operational Equivariant Cobordism of Toric Varieties}             \label{operational.cobordism.TV}

The inclusion map $i_\sigma: O_\sigma \hookrightarrow X_\Delta$ is an l.c.i. morphism when $X_\Delta$ is smooth, hence there exists the pull-back map:
\[ i_\sigma^*: \Omega^*_T(X_\Delta) \to \Omega^*_T(O_\sigma) = S_\sigma.\]
The following theorem was proved in \cite{Krishna-Uma}.

\begin{theorem} (Krishna-Uma) \label{thm-Krishna-Uma} Let $X_\Delta$ be a smooth quasiprojective toric variety. Then the morphism of $S$-algebras
 \[  \Omega^*_T(X_\Delta) \xrightarrow{(i_\sigma^*)} \prod_{\sigma\in\Delta} S_\sigma \]
is injective and the image is equal to the $S$-algebra $PPS(\Delta)$ of piecewise graded power series on $\Delta$ with coefficients in the Lazard ring.
\end{theorem}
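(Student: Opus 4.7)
My plan is to verify in three steps that the $S$-algebra map $(i_\sigma^*)$ is well-defined into $PPS(\Delta)$ and then an isomorphism onto it. Since $X_\Delta$ is smooth, Poincaré duality (Proposition~\ref{poincare}) identifies the operational ring $\Omega^*_T(X_\Delta)$ with the homological $\Omega_*^T(X_\Delta)$, so I may work throughout with the full ROBM pre-homology structure of $\Omega_*^T$ rather than its operational counterpart.

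For well-definedness I would first observe that each $i_\sigma: O_\sigma \hookrightarrow X_\Delta$ is an l.c.i. morphism: the orbit closure $V(\sigma)$ is a smooth torus-invariant subvariety of the smooth $X_\Delta$, so $V(\sigma) \hookrightarrow X_\Delta$ is a regular embedding and $O_\sigma \hookrightarrow V(\sigma)$ is an open immersion. Morita's isomorphism together with formula (\ref{eq-triv-act}) identifies $\Omega^*_T(O_\sigma) \cong S_\sigma$. For $\tau \leq \sigma$, both $O_\tau$ and $O_\sigma$ lie in the smooth $V(\tau)$, so functoriality of the l.c.i. pullback combined with naturality of Morita along the subtorus inclusion $T_\tau \hookrightarrow T_\sigma$ yields $\res^\sigma_\tau(i_\sigma^* c) = i_\tau^* c$, placing the image of $(i_\sigma^*)$ inside $PPS(\Delta)$.

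For the isomorphism onto $PPS(\Delta)$ I would induct on the number of maximal cones in $\Delta$, handling injectivity and image identification simultaneously. The base case is a single smooth cone $\sigma$: the affine $U_\sigma \cong \mathbb{A}^{\dim\sigma} \times (T/T_\sigma)$ equivariantly, so the homotopy property $\operatorname{(H)}$ applied to the equivariant theory together with Morita gives $\Omega^*_T(U_\sigma) \cong S_\sigma = PPS(\St\sigma)$, naturally in face inclusions. For the inductive step, pick a maximal $\sigma_0 \in \Delta$, set $\Delta' = \Delta \setminus \{\sigma_0\}$, and write $X_\Delta = X_{\Delta'} \cup U_{\sigma_0}$ with $X_{\Delta'} \cap U_{\sigma_0} = X_{\partial\sigma_0}$; the inductive hypothesis identifies $\Omega^*_T$ of each piece with the corresponding $PPS$. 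The main obstacle is gluing these into $\Omega^*_T(X_\Delta)$: $\Omega^T_*$ does not a priori satisfy a Mayer-Vietoris sequence for arbitrary open covers, but the required exact sequence can be produced by splicing the localization sequences $\operatorname{(L)}$ attached to the closed inclusions $O_{\sigma_0} \hookrightarrow X_\Delta$ (open complement $X_{\Delta'}$) and $O_{\sigma_0} \hookrightarrow U_{\sigma_0}$ (open complement $X_{\partial\sigma_0}$), comparing them via the excision isomorphism available since $O_{\sigma_0}$ is closed in both. Matching the resulting equalizer condition to the $PPS(\Delta)$ gluing condition then reduces to a direct comparison of restriction maps to $S_\tau$ for faces $\tau$ of $\sigma_0$, finishing the induction.
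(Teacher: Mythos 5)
First, a point of comparison: the paper does not prove this statement at all --- it is imported from Krishna--Uma with the citation ``The following theorem was proved in \cite{Krishna-Uma}'' and then used as a black box in the proof of Theorem~\ref{thm.toric}. So your proposal is not an alternative to an in-paper argument; it is an attempt to supply the missing proof, and it has to be judged on its own. Your Step 1 (well-definedness into $PPS(\Delta)$) and your base case are fine, and your instinct that a Mayer--Vietoris-type sequence can be extracted from the two localization sequences for $O_{\sigma_0}\subset X_\Delta$ and $O_{\sigma_0}\subset U_{\sigma_0}$ is correct \emph{as far as exactness in the middle goes}: since both sequences start from the same group $\Omega^T_*(O_{\sigma_0})$ and restriction to opens commutes with pushforward by $(\operatorname{A_2})$, a diagram chase shows that a pair of classes on $X_{\Delta'}$ and $U_{\sigma_0}$ agreeing on $X_{\partial\sigma_0}$ glues. (There is no ``excision isomorphism'' here in any literal sense --- the theory has no relative groups and the localization sequence does not continue to the left --- but the chase does not need one.) This, with the inductive hypothesis, handles the surjectivity onto $PPS(\Delta)$.

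The genuine gap is injectivity, which your outline asserts is ``handled simultaneously'' by the equalizer condition but which does not follow from it: a sequence $X\to A\oplus B\to C$ exact in the middle says nothing about the kernel of $X\to A\oplus B$. Concretely, if $\alpha\in\Omega^*_T(X_\Delta)$ dies on $X_{\Delta'}$, localization only gives $\alpha=i_*(c)$ for some $c\in\Omega^T_*(O_{\sigma_0})$, and dying on $U_{\sigma_0}$ gives $(i_U)_*(c)=0$ for the zero-section inclusion $i_U:O_{\sigma_0}\hookrightarrow U_{\sigma_0}$. To conclude $c=0$ you need $(i_U)_*$ injective, and this is the one real idea the proof requires: by the self-intersection formula, $i_U^*(i_U)_*$ is multiplication by the equivariant Euler class $t_1\cdots t_d$ of the bundle $U_{\sigma_0}\to O_{\sigma_0}$, and this is a non-zero-divisor in $S_{\sigma_0}\isom \LL\llbracket t_1,\ldots,t_d\rrbracket_{gr}$ because $\LL$ is a polynomial ring over $\ZZ$, hence a domain. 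Without this step the induction does not close, since injectivity is part of the inductive hypothesis. Two secondary issues: the properties $\operatorname{(H)}$ and $\operatorname{(L)}$ that you invoke are only established in this paper for the non-equivariant $\Omega_*$, and their equivariant analogues do not follow formally from the inverse-limit construction (limits are not exact); they must be quoted from Krishna or Heller--Malag\'on-L\'opez. Also, deleting a maximal cone can create new maximal cones in $\Delta'$, so you should induct on the total number of cones rather than the number of maximal ones.
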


In the proof of Krishna and Uma, the group $\Omega^*_T(X_\Delta)$ stands for the cohomological notation of  $\Omega_{\dim X_\Delta-*}^T(X_\Delta)$ and the maps $i_\sigma^*$ are l.c.i. pull-backs. We claim that the same statement is true for general $X_\Delta$ when $\Omega^*_T(X_\Delta)$ stands for the operational cobordism group and $i_\sigma^*$ is the pull-back morphism in the operational theory.
 
\begin{theorem} \label{thm.toric} Let $X_\Delta$ be a quasiprojective toric variety. Then the morphism of $S$-algebras
 \[  \Omega^*_T(X_\Delta) \xrightarrow{(i_\sigma^*)} \prod_{\sigma\in\Delta} S_\sigma \]
is injective and the image is equal to the $S$-algebra $PPS(\Delta)$ of piecewise graded power series on $\Delta$ with coefficients in the Lazard ring.
\end{theorem}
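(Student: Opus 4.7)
The plan is to extend Krishna and Uma's smooth case (Theorem \ref{thm-Krishna-Uma}) to arbitrary $X_\Delta$ via the Kimura-type descent criterion of Theorem \ref{thm-ind}, following the inductive scheme that Payne used in the Chow case. The restriction homomorphism $\Phi: \Omega^*_T(X_\Delta) \to \prod_{\sigma \in \Delta} S_\sigma$ is the product of the operational pullbacks $i_\sigma^*$, and functoriality together with the compatibility between operational pullbacks and the Morita identifications $\Omega^*_T(O_\sigma) \cong S_\sigma$ places its image inside the subring $PPS(\Delta)$. When $X_\Delta$ is smooth, Poincar\'e duality (Proposition \ref{poincare}) identifies the operational $\Omega^*_T(X_\Delta)$ with the equivariant Borel-Moore cobordism used in \cite{Krishna-Uma} and each operational $i_\sigma^*$ with the corresponding l.c.i.\ pullback, whence $\Phi$ is exactly the isomorphism of Theorem \ref{thm-Krishna-Uma}.

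For the general case I would induct on $\dim X_\Delta$. Choose a smooth toric refinement $\tilde\Delta$ of $\Delta$, so that $\pi: X_{\tilde\Delta} \to X_\Delta$ is a projective birational $T$-equivariant envelope, and let $U \subset X_\Delta$ be the union of those orbits $O_\tau$ with $\tau$ already smooth; then $\pi$ is an isomorphism over $U$. The complement decomposes as $X_\Delta \setminus U = \bigcup_i V(\sigma_i)$ indexed by the minimal non-smooth cones $\sigma_i$ of $\Delta$. Each $V(\sigma_i)$ is itself a toric variety for the star fan $\Star(\sigma_i)$ in the quotient lattice, of strictly smaller dimension than $X_\Delta$, and $\pi_i: E_i = \pi^{-1}(V(\sigma_i)) \to V(\sigma_i)$ is a toric resolution. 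The inductive hypothesis therefore gives $\Omega^*_T(V(\sigma_i)) \cong PPS(\Star \sigma_i)$ in a way compatible with $\pi_i^*$.

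Injectivity of $\Phi$ is essentially immediate: if $\Phi(c) = 0$, then $\pi^*(c) \in \Omega^*_T(X_{\tilde\Delta})$ restricts to zero on every orbit of $X_{\tilde\Delta}$ by functoriality of operational pullback along the maps of orbits induced by $\pi$, hence $\pi^*(c) = 0$ by the smooth case, and $c = 0$ by the injectivity clause of Theorem \ref{thm-ind}. For surjectivity, a section $f \in PPS(\Delta)$ first pulls back to $\tilde f \in PPS(\tilde\Delta)$ under the refinement (using the Morita identifications and the maps of orbits induced by $\pi$), and $\tilde f$ corresponds by Theorem \ref{thm-Krishna-Uma} to a class $\tilde c \in \Omega^*_T(X_{\tilde\Delta})$. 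To descend $\tilde c$ to $X_\Delta$ via Theorem \ref{thm-ind}, one must produce $c_i \in \Omega^*_T(V(\sigma_i))$ with $\pi_i^*(c_i) = \tilde c|_{E_i}$; the inductive hypothesis applied to the restriction $f|_{\Star \sigma_i} \in PPS(\Star \sigma_i)$ supplies $c_i$, and its further pullback to the smooth refinement of $\Star \sigma_i$ agrees with $\tilde f|_{E_i}$ by Krishna-Uma applied to the smooth toric variety $E_i$. Theorem \ref{thm-ind} then produces $c \in \Omega^*_T(X_\Delta)$ with $\pi^*(c) = \tilde c$, and a comparison on orbits shows $\Phi(c) = f$.

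The main obstacle will be the combinatorial bookkeeping at the interface of three different identifications: the Morita isomorphism that realizes $\Omega^*_T(O_\sigma)$ as a graded power series ring once a basis of $M/(M \cap \sigma^\perp)$ is fixed, the formal group law $F_\LL$ governing how these rings transform under change of basis and under the quotient maps $N \to N/(\Span \sigma \cap N)$ that define star fans, and the definition of the restriction maps in the sheaf $PPS$. Payne's argument in the Chow case proceeds through straightforward linear algebra, but here each base change and each face restriction must be tracked through $F_\LL$ in order to recognize the geometric compatibilities produced by Theorem \ref{thm-ind} as precisely the combinatorial compatibilities defining $PPS(\Delta)$; establishing this dictionary carefully is what converts the outline above into a complete proof.
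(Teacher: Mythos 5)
Your overall strategy---induction on dimension, passage to a smooth toric resolution, and descent via Theorem~\ref{thm-ind} back to Krishna--Uma---is the same as the paper's, but two steps in your outline have genuine gaps. First, the inductive hypothesis as you state it is not strong enough to identify $\Omega^*_T(V(\sigma_i))$ with $PPS(\St\sigma_i)$. The orbit closure $V(\sigma_i)$ is a toric variety only for the quotient torus $T_1$ in a splitting $T=T_1\times T_2$ with $T_2$ acting trivially, so the theorem applied to $V(\sigma_i)$ in its own right gives $\Omega^*_{T_1}(V(\sigma_i))$, not $\Omega^*_{T}(V(\sigma_i))$. To add the trivially acting factor $T_2$ one would like to invoke the formula $\Omega^*_{G\times T}(X)\isom \Omega^*_G(X)\llbracket t_1,\ldots,t_l\rrbracket_{gr}$, but that formula is only available for \emph{smooth} $X$, and $V(\sigma_i)$ is generally singular. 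The paper resolves this by proving a strengthened claim throughout the induction: the statement for $\Omega^*_{T\times T'}(X_\Delta)$ with an arbitrary extra torus $T'$ acting trivially, whose image is $PPS(\Delta)\llbracket u_1,\ldots,u_m\rrbracket_{gr}$. Without this strengthening your appeal to ``the inductive hypothesis'' at the orbit closures does not go through.

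Second, resolving all singularities at once creates a problem at the exceptional loci: $E_i=\pi^{-1}(V(\sigma_i))$ is a union of orbit closures of $X_{\tilde\Delta}$, typically reducible and singular along the intersections of its components, so it is \emph{not} a smooth toric variety and Krishna--Uma cannot be ``applied to the smooth toric variety $E_i$'' as you propose; computing $\Omega^*_T(E_i)$ and the map $\pi_i^*$ would require a further descent argument for such unions. The paper avoids this entirely by running an inner induction on the number of star subdivisions needed to resolve $\Delta$: a single star subdivision $X_{\Delta'}\to X_\Delta$ is a blowup whose center has support a single irreducible orbit closure $V_\pi$ and whose exceptional locus has support the single orbit closure $V_\rho$ of the new ray, so Theorem~\ref{thm-ind} is applied with one $S_1=V_\pi$, $E_1=V_\rho$, both handled by the (strengthened) dimension induction. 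Your concluding paragraph correctly flags the formal-group-law bookkeeping in the restriction maps $\tilde S_\tau\to\tilde S_\sigma$ (this is the content of the paper's two lemmas identifying $\Omega^*_{\tilde T}(V_\pi)\isom\widetilde{PPS}(\St\pi)$ and the pull-back $\widetilde{PPS}(\St\pi)\to\widetilde{PPS}(\St\rho)$), but the two issues above are structural, not bookkeeping, and need to be repaired before the outline becomes a proof.
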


\begin{proof}
 
We prove the theorem by induction on $\dim X_\Delta$.
In the inductive proof we will need a slightly stronger statement. Let $T'$ be another split torus with character lattice $M'$. Fix a basis $\nu_1,\ldots,\nu_m$ of $M'$ and the corresponding isomorphism $\Omega^*_{T'}(\Spec k) \isom \LL\llbracket u_1,\ldots,u_m\rrbracket_{gr}$. We let $T'$ act trivially on $X_\Delta$. 

Instead of the theorem, we prove the following stronger statement:
\begin{claim} The morphism 
 \[  \Omega^*_{T\times T'} (X_\Delta) \xrightarrow{(i_\sigma^*)} \prod_{\sigma\in\Delta} S_\sigma\llbracket u_1,\ldots,u_m\rrbracket_{gr}\]
is injective and the image is equal to the graded $S\llbracket u_1,\ldots, u_m\rrbracket_{gr}$-algebra $PPS(\Delta)\llbracket u_1,\ldots, u_m\rrbracket_{gr}$.
\end{claim}

To simplify notation, let $\tilde{T} = T\times T'$, $\tilde{S} = S\llbracket u_1,\ldots,u_m\rrbracket_{gr}$, $\tilde{S}_\sigma = S_\sigma\llbracket u_1,\ldots,u_m\rrbracket_{gr}$. The restriction maps $\tilde{S}_\sigma \to \tilde{S}_\tau$ are obtained by applying  the restriction maps $S_\sigma\to S_\tau$ to the coefficients of a power series. This collection of rings and restriction maps defines a sheaf $\widetilde{PPS}$ on $\Delta$, with global sections $\widetilde{PPS}(\Delta) = PPS(\Delta)\llbracket u_1,\ldots,u_m\rrbracket_{gr}$. We also call the $\tilde{S}$-algebra   $\widetilde{PPS}(\Delta)$ the ring of piecewise power series on $\Delta$.

For smooth $X_\Delta$ the claim follows from Theorem~\ref{thm-Krishna-Uma} and Equation~(\ref{eq-triv-act}).

When $X_\Delta$ is singular, we can resolve its singularities by a sequence of star subdivisions of $\Delta$:
\[ X_\Delta \lar X_{\Delta'} \lar \cdots\lar X_{\Delta''}.\]
We may assume by induction on the number of star subdivisions that the claim holds for $X_{\Delta'}$. The morphism $f:X_{\Delta'}\to X_{\Delta}$ is the blowup of $X_\Delta$ along a closed subscheme $C\subset X_\Delta$ with support $|C|$ equal to the orbit closure  $V_\pi =  \overline{O}_\pi$, where $\pi\in\Delta$ is the cone containing the subdivision ray in its relative interior. Let $\rho\in\Delta'$ be the new ray. Then the exceptional divisor $E=f^{-1}(C)$ has support $|E| = V_\rho$. The morphism $f$ is a birational envelope. In order to use Theorem~\ref{thm-ind}, we need to identify $\Omega^*_{\tilde{T}}(C), \Omega^*_{\tilde{T}}(E)$ and the pull-back map between them. 

\begin{lemma} For any $0\neq \pi\in\Delta$, the map 
\[  \Omega^*_{\tilde{T}}(V_\pi) \xrightarrow{(i_\sigma^*)} \prod_{\sigma\in\St(\pi)} \tilde{S}_\sigma\]
is injective and the image is equal to the graded $\tilde{S}$-algebra $\widetilde{PPS}(\St \pi)$ of piecewise power series on $\St \pi$. 
\end{lemma}
\begin{proof}
 The orbit closure $V_\pi$ is again a toric variety corresponding to the fan $\Delta_\pi$ that is the image of  $\St \pi$ in $N/(\Span\pi\cap N)$. We split $N=N_1\oplus N_2$, where $N_2 = \Span\pi\cap N$, and consider $\Delta_\pi$ as a fan in $N_1$. Splitting the lattice corresponds to decomposing the torus $T=T_1\times T_2$, where $T_1$ is the big torus in $V_\pi$ and $T_2$ acts trivially on $V_\pi$. Since $\dim V_\pi < \dim X_\Delta$, we have by induction
\begin{equation} \label{eq11}  
\Omega^*_{T_1\times T_2 \times T'}(V_\pi) \isom PPS(\Delta_\pi) \llbracket t_1,\ldots,t_l\rrbracket_{gr}\llbracket u_1,\ldots,u_m\rrbracket_{gr}, 
\end{equation}
where $t_1,\ldots, t_l$ correspond to a basis of the dual lattice of $N_2$, hence $S_\pi \isom \LL\llbracket t_1,\ldots,t_l\rrbracket_{gr}$. The isomorphism in (\ref{eq11}) is defined by pull-back maps 
\[ i_\sigma^*: \Omega^*_{T_1\times T_2 \times T'}(V_\pi) \to \Omega^*_{T_1\times T_2 \times T'}(O_\sigma)\]
for $\sigma \in \St \pi$. Here we used the fact that if $\bar{\sigma}$ is the image of $\sigma$ in $\Delta_\pi$, then $O_\sigma = O_{\bar{\sigma}}$ as subsets of $X_\Delta$. Since $S_\sigma = S_{\bar{\sigma}}\llbracket t_1,\ldots,t_l\rrbracket_{gr}$ for every $\sigma\in\St \pi$, it follows that   
\[  PPS(\Delta_\pi) \llbracket t_1,\ldots,t_l\rrbracket_{gr}  \isom PPS(\St \pi). \]
This implies the statement of the lemma.
\end{proof}

We can apply the previous lemma to $\rho\in\Delta'$, to get that $\Omega^*_{\tilde{T}}(V_\rho)$ is isomorphic to $\widetilde{PPS}(\St \rho)$. Moreover, the pull-back map $\Omega^*_{\tilde{T}}(X_{\Delta'}) \to \Omega^*_{\tilde{T}}(V_\rho)$ is simply the restriction of piecewise power series $\widetilde{PPS}(\Delta') \to \widetilde{PPS}(\St \rho)$.

Next we describe the pull-back morphism $\Omega^*_{\tilde{T}}(V_\pi) \to \Omega^*_{\tilde{T}}(V_\rho)$.
Note that every cone in $\sigma\in \St\rho$ lies in some cone of $\tau\in \St\pi$. Choosing a basis for $M$ compatibly with $\sigma$ and $\tau$ gives a restriction map of power series $\tilde{S}_\tau\to \tilde{S}_\sigma$.  It is easy to see that these maps combine to give a  well defined pull-back map of piecewise power series $\widetilde{PPS}(\St\pi)\to \widetilde{PPS}(\St\rho)$.

\begin{lemma}
The pull-back morphism 
\[\Omega^*_{\tilde{T}}(V_\pi)\isom \widetilde{PPS}(\St\pi)  \to \Omega^*_{\tilde{T}}(V_\rho) \isom \widetilde{PPS}(\St\rho)\]
is the pull-back of piecewise power series.
\end{lemma}

\begin{proof}
Define a map $\phi: \Delta'\to \Delta$ so that $\phi(\sigma)$ is the smallest cone in $\Delta$ containing $\sigma$. Then the map $f:X_{\Delta'}\to X_\Delta$ takes $O_\sigma$ onto $O_{\phi(\sigma)}$. Choosing an appropriate basis for $M$, the pull-back morphism
\[ \Omega^*_{\tilde{T}}(O_{\phi(\sigma)} ) = \tilde{S}_{\phi(\sigma)} \to \Omega^*_{\tilde{T}}(O_\sigma) = \tilde{S}_\sigma \]
is the restriction of power series. 

Consider the diagram
\[ \begin{CD}  
 \Omega^*_{\tilde{T}}(V_\rho) @>>> \prod_{\sigma\in\St\rho} \Omega^*_{\tilde{T}}(O_\sigma)  \\
@AAA @AAA\\
 \Omega^*_{\tilde{T}}(V_\pi) @>>> \prod_{\tau\in\St\pi} \Omega^*_{\tilde{T}}(O_\tau),\\
\end{CD} \]
where all maps are pull-back morphisms. The right vertical map sends $(s_\tau)$ to $(t_\sigma)$, such that $t_\sigma$ is the image of $s_{\phi(\sigma)}$; this map restricts to the pull-back of piecewise power series $\widetilde{PPS}(\St\pi)\to \widetilde{PPS}(\St\rho)$.
Now it suffices to show that the diagram commutes, which follows from the commutativity of the diagram:
\[ \begin{CD}  
 V_\rho @<<< O_\sigma  \\
@VVV @VVV\\
 V_\pi @<<< O_{\phi(\sigma)}\\
\end{CD} \]
for every $\sigma\in\St \rho$.
\end{proof}

To finish the proof of the claim, we apply Theorem~\ref{thm-ind} with $S_1 = C^{red} = V_\pi$ and $E_1 = E^{red} = V_\rho$. Since  the pull-back map $\Omega^*_{\tilde{T}}(V_\pi)\to \Omega^*_{\tilde{T}}(V_\rho)$ is injective, Theorem~\ref{thm-ind} implies that we have a Cartesian diagram, with all maps pull-backs:
\[ \begin{CD}  
 \Omega^*_{\tilde{T}}(X_\Delta) @>>>  \Omega^*_{\tilde{T}}(V_\pi)  \\
@VVV @VVV\\
 \Omega^*_{\tilde{T}}(X_{\Delta'}) @>>>  \Omega^*_{\tilde{T}}(V_\rho).\\
\end{CD} \]
The following diagram of piecewise power series and pull-back maps is clearly Cartesian:
\[ \begin{CD}  
 \widetilde{PPS}(\Delta) @>>>  \widetilde{PPS}(\St \pi)  \\
@VVV @VVV\\
 \widetilde{PPS}(\Delta') @>>>  \widetilde{PPS}(\St \rho).\\
\end{CD} \]
This implies that $\Omega^*_{\tilde{T}}(X_\Delta)$ is isomorphic to $\widetilde{PPS}(\Delta)$. To see that the isomorphism comes from the maps $(i_\sigma^*)$ as stated in the claim, we only need to consider the commutative diagram:
\[ \begin{CD}  
 \Omega^*_{\tilde{T}}(X_\Delta) @>>> \prod_{\sigma\in\Delta} S_\sigma  \\
@VVV @VVV\\
 \Omega^*_{\tilde{T}}(X_{\Delta'}) @>>>  \prod_{\tau\in\Delta'} S_\tau,\\
\end{CD} \]
where the right vertical morphism is defined by the map $\phi: \Delta'\to\Delta$  as above. Commutativity of the diagram shows that the inclusion map 
\[ \Omega^*_{\tilde{T}}(X_\Delta)  \hookrightarrow \Omega^*_{\tilde{T}}(X_{\Delta'})  \isomto \widetilde{PPS}(\Delta') \]
factors through $\prod_{\sigma\in\Delta} S_\sigma$. The image of $\Omega^*_{\tilde{T}}(X_\Delta)$ in $\prod_{\sigma\in\Delta} S_\sigma$ is then equal to $\widetilde{PPS}(\Delta)$.
\end{proof}


\bibliographystyle{plain}
\bibliography{cobordismbib}

\end{document}